\definecolor{halfgray}{gray}{0.55} 
\definecolor{webgreen}{rgb}{0,0.5,0}
\definecolor{webbrown}{rgb}{.6,0,0} \hypersetup{%
\newtheorem{theorem}{Theorem}[section]
\newtheorem{lemma}[theorem]{Lemma}
\newtheorem{corollary}[theorem]{Corollary}
\newtheorem{proposition}[theorem]{Proposition}
\newtheorem{claim}[theorem]{Claim}
\theoremstyle{definition}
\newtheorem{definition}[theorem]{Definition}
\newtheorem{remark}[theorem]{Remark}
\def\R{\mathbb{R}}
\def\N{\mathbb{N}}
\def\sm{\setminus}
\renewcommand{\phi}{\varphi}
\def\loc{\mathrm{loc}}
\def\Gl{{GL}}
\def\Sl{{SL}}
\def\P{{\mathbb{P}}}
\def\Pone{{\mathbb{P}^{1}}}
\newcommand{\restrict}[2]{{#1}{|_{{ #2}}}}
\newcommand{\inv}{^{-1}}
\renewcommand{\setminus}{\smallsetminus}
\keywords{Lyapunov exponents, continuity, non-uniform hyperbolicity, random dynamics, linear cocycles}
 \subjclass[2010]{Primary:  37H15, 37D30; Secondary: 37D25, 37E99}
\begin{document}

\title
[Continuity of Lyapunov Exponents]
{Continuity of Lyapunov Exponents for Cocycles with Invariant Holonomies}

\author[L.~Backes]{Lucas Backes} \address{\noindent IME - Universidade do Estado do Rio de Janeiro, Rua S\~ao Francisco Xavier 524, CEP 20550-900, Rio de Janeiro, RJ, Brazil . 
\newline e-mail: \rm
  \texttt{lhbackes@impa.br} }

\author[A.~Brown]{Aaron Brown}\address{\noindent  Department of Mathematics, University of Chicago,  5734 S University Ave, Chicago, IL 60637
\newline e-mail: \rm
  \texttt{awbrown@math.uchicago.edu} }

\author[C.~Butler]{Clark Butler}\address{\noindent Department of Mathematics, University of Chicago,  5734 S University Ave, Chicago, IL 60637
  \newline e-mail: \rm
  \texttt{cbutler@math.uchicago.edu}}

\thanks{The third author was supported by the National Science Foundation Graduate Research Fellowship under Grant No. DGE-1144082.}
%\date{\today}

%\keywords{?????}
  
\begin{abstract}
We prove a conjecture of Viana which states that Lyapunov exponents vary continuously when restricted to $GL(2,\mathbb{R})$-valued cocycles over a subshift of finite type which admit invariant holonomies that depend continuously on the cocycle. 
\end{abstract}

\maketitle

\section{Introduction}
Consider an invertible measure preserving transformation $f\colon (X, \mu)\to (X,\mu)$ of a standard probability space. For simplicity, assume $\mu$ to be ergodic. 
Given  a measurable function $A\colon X\to GL(d, \mathbb{R})$ we define the linear cocycle over $f$ by the dynamically defined products
\begin{equation}\label{def:cocycles}
A^n(x)=
\left\{
	\begin{array}{ll}
		A(f^{n-1}(x))\ldots A(f(x))A(x)  & \mbox{if } n>0 \\
		Id & \mbox{if } n=0 \\
		(A^{-n}(f^{n}(x)))^{-1}=A(f^{n}(x))^{-1}\ldots A(f^{-1}(x))^{-1}& \mbox{if } n<0. \\
	\end{array}
\right.
\end{equation}

Under certain integrability hypotheses (for instance if the range of $A$ is bounded),  Oseledets theorem guarantees the existence of numbers $\lambda _1>\ldots > \lambda _{k}$, called the \emph{Lyapunov exponents}, and a decomposition $\mathbb{R}^d=E^1_{x}\oplus \ldots \oplus E^k_{x}$, called the \emph{Oseledets splitting}, into vector subspaces depending measurably on $x$ such that for almost every $x$
\begin{displaymath}
A(x)E^i_{x}=E^i_{f(x)} \; \textrm{and} \; \lambda _i =\lim _{n\to \pm  \infty} \dfrac{1}{n}\log \| A^n(x)v\| 
\end{displaymath}
for every non-zero $v\in E^i_{x}$ and $1\leq i \leq k$. 

Lyapunov exponents arrive naturally in the study smooth dynamics. Indeed, given a diffeomorphism of a manifold that preserves a probability measure, the derivative determines a natural cocycle associated to the  system.      The corresponding Lyapunov exponents  play a central role in the modern study of dynamical systems. For instance, given a $C^2$ diffeomorphism preserving a measure with negative exponents, Pesin constructed stable  manifolds through almost every point \cite{MR0458490}.  
 Moreover, Lyapunov exponents are deeply connected with the entropy of smooth dynamical systems and the geometry of measure as shown by the entropy formulas of Ruelle \cite{MR516310}, Pesin \cite{MR0466791}, and Ledrappier--Young \cite{MR819556,MR819557}.

In the present paper, we are interested in the continuity properties of Lyapunov exponents as one varies the cocycle and the underlying measure while keeping the base dynamics constant.  Our base dyanmics will be a subshift of finite type or, more generally, a hyperbolic set and our measures will always be taken to   be measures admitting a local product structure.  
As a corollary of our main result, we obtain continuity of Lyapunov exponents for fiber-bunched cocycles in the space of H\"older  continuous cocycles, 
giving  an affirmative answer to a conjecture   \cite[Conjecture 10.12]{V2} of Viana 
(see Sections  \ref{sec:definitions} and \ref{sec: cor}  
for precise definitions and statements):

\begin{theorem} \label{conjecture}
Lyapunov exponents vary continuously restricted to the subset of fiber-bunched elements $A\colon M\rightarrow GL(2,\mathbb{R})$ of the space $H^r(M)$. 
\end{theorem}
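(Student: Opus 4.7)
The plan is to derive Theorem \ref{conjecture} as a direct corollary of the main continuity result of the paper (which establishes continuity of Lyapunov exponents along sequences of H\"older cocycles that converge together with their invariant holonomies). So the task reduces to: (i) checking that fiber-bunching is an open condition in $H^r(M)$ so that on the fiber-bunched locus a cocycle genuinely admits stable and unstable invariant holonomies; and (ii) showing that these holonomies depend continuously on the cocycle as one varies it inside the fiber-bunched locus.

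First I would recall the construction of the invariant holonomies $H^s_{x,y}$ and $H^u_{x,y}$ for a fiber-bunched H\"older cocycle. For points $x,y$ on a common local stable manifold, one considers the telescoping product
\begin{equation*}
H^s_{x,y} = \lim_{n\to\infty} A^n(y)^{-1} A^n(x),
\end{equation*}
and the fiber-bunching inequality, together with the H\"older regularity of $A$, is exactly what makes this product converge geometrically; the unstable holonomy is defined analogously using negative iterates. Since the fiber-bunching condition is an open inequality involving $\|A\|$, $\|A^{-1}\|$ and the H\"older constants, it defines an open subset of $H^r(M)$, and on any fixed closed fiber-bunched neighborhood the convergence of the telescoping series is uniform with a uniform geometric rate.

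Next I would use this uniform convergence to upgrade to continuity in the cocycle. If $A_k \to A$ in $H^r(M)$ with all $A_k$, $A$ lying in a fixed fiber-bunched neighborhood, then for every $\varepsilon>0$ one can choose $N$ large so that the tails of the telescoping product defining $H^s$ for every $A_k$ and for $A$ are uniformly smaller than $\varepsilon$; on the other hand, the partial product $A^N(y)^{-1}A^N(x)$ depends continuously on $A$ in the $C^0$ topology for fixed $N$, and one gets $H^s_{x,y}(A_k) \to H^s_{x,y}(A)$ uniformly in $(x,y)$ on local stable sets (and similarly for $H^u$). This is precisely the continuity-of-holonomies hypothesis feeding into the main theorem.

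Finally I would invoke the main theorem of the paper: any sequence of $GL(2,\R)$-cocycles converging in $C^0$ (here even in $H^r$) whose invariant holonomies also converge uniformly has Lyapunov exponents converging to those of the limit cocycle. Combining this with the previous two steps yields continuity of the Lyapunov exponents on the fiber-bunched locus of $H^r(M)$, which is the desired statement. The main obstacle in the whole program is of course the main theorem itself; the reduction sketched here is essentially soft, with the only real verification being the uniform convergence of the holonomy series on a fiber-bunched neighborhood, which follows from the openness of the fiber-bunching inequality.
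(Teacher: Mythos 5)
Your proposal is essentially correct and follows the same route as the paper's proof (carried out in Corollary~\ref{cor: fiber bunched continuity}): verify openness of the fiber-bunching condition, verify that the canonical stable/unstable holonomies depend uniformly continuously on the cocycle over a fiber-bunched neighborhood (the paper does this by citation to \cite{BGV, VianaAlmostAllCocyc}, whereas you sketch the telescoping-tail argument directly, which is fine), and then feed this into Theorem~\ref{mainthm}.

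One point you pass over in silence is the measure side of Theorem~\ref{mainthm}'s hypotheses. You implicitly fix the invariant measure $\hat\mu$, which makes the condition ``$\hat\mu_k \to \hat\mu$ as in Section~\ref{sec: prod}'' trivially satisfied, but you still need to record that the reference measure is ergodic, fully supported, and has local product structure (true for equilibrium states of H\"older potentials, which is the setting of Viana's conjecture). The paper's Corollary~\ref{cor: fiber bunched continuity} is strictly stronger: it also lets the potential $\varphi$, hence the equilibrium state $\hat\mu_\varphi$, vary, and a large part of its proof is devoted to showing that convergence of potentials in $C^\beta$ implies convergence of equilibrium states in the strengthened sense of Section~\ref{sec: prod} (via the transfer operator and the cohomological construction of the density $\xi_\varphi$). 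Your argument would not deliver that joint continuity in $(\hat A, \varphi)$; it gives exactly continuity in the cocycle for a fixed equilibrium state, which suffices for the statement of Theorem~\ref{conjecture} as written.
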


In general, one can not expect to obtain continuity of Lyapunov exponents in the space of H\"older cocycles without any extra assumption. Indeed, in \cite{BocV}, Bocker and Viana presented an example of a H\"older-continuous, $SL(2,\mathbb{R})$-valued  cocycle with non-zero Lyapunov exponents which is approximated in the H\"older topology by cocycles with zero Lyapunov exponents. Recently the third author \cite{But} has refined the Bocker-Viana construction to build a family of examples of discontinuity of Lyapunov exponents in the H\"older topology which are arbitrarily close to being fiber-bunched. Theorem \ref{conjecture} is sharp for this family. 

The technique employed by Bocker and Viana to construct their example is a refinement of a technique used by Bochi \cite{Bochi, Bochi2} to prove the Bochi--Ma\~n\'e theorem. This theorem implies that, in the space of continuous cocycles over aperiodic  base dynamics, the only continuity points for Lyapunov exponents of $SL(2,\mathbb{R})$-valued  cocycles are those which are (uniformly) hyperbolic and those with zero exponents.  Thus, discontinuity of Lyapunov exponents is typical if one only assumes continuous variation of the cocycle.

The main dynamical feature exhibited by fiber-bunched cocycles is the existence of a continuous family of invariant holonomies. These holonomies moreover vary continuously with the cocycle.  
This is the main geometric property we exploit to establish the continuity of Lyapunov exponents. Our main theorem below states that Lyapunov exponents depend continuously on the cocycle and on the underlying measure if we restrict ourselves to families of cocycles admitting invariant holonomies and to families of invariant measures with local product structure and ``well behaved" Jacobians. 

Even though discontinuity of Lyapunov exponents is a quite common feature as we pointed out above, there are some contexts where continuity has been previously established. For instance, Furstenberg and Kifer \cite{FK, Kif} established continuity of the largest Lyapunov exponent for i.i.d.\ random matrices under certain irreducibility conditions. In the same setting, but under assumption of strong irreducibility and a certain contraction property, Le Page \cite{LePage1, LePage1} showed local H\"older continuity and even smoothness of Lyapunov exponents. Duarte and Klein \cite{DK} derived H\"older continuity of the Lyapunov exponents for a class irreducible Markov cocycles. 
In certain cases one can obtain real-analyticity of the Lyapunov exponents \cite{Rue, Pe}. Continuity has also proven in the context of Schr\"odinger cocycles by Bourgain and Jitomirskaya \cite{Bou, BJ}. More recently, Bocker and Viana \cite{BocV} and Malheiro and Viana \cite{1410.1411} proved continuity of Lyapunov exponents for random products of 2-dimensional matrices in the Bernoulli and Markov settings. Our result extends the results of \cite{BocV} and  \cite{1410.1411}.  % last two results. 
In higher dimensions, continuous dependence of all Lyapunov exponents for i.i.d.\ random products of matrices in $GL(d,\mathbb{R})$ was announced by Avila, Eskin, and Viana \cite{AEV}.

\section{Definitions and statement of main theorem}\label{sec:definitions}
\subsection{Subshifts of finite type}
Let $Q = (q_{ij})_{1 \leq i,j\leq \ell}$ be an $\ell \times \ell$ matrix with $q_{ij} \in \{0,1\}$. The \emph{subshift of finite type} associated to the matrix $Q$ is the subset of the bi-infinite sequences $\{1,\dots, \ell\}^{\mathbb{Z}}$ satisfying %defined by 
\[
\hat{\Sigma} = \{(x_{n})_{n \in \mathbb{Z}}: q_{x_{n}x_{n+1}} = 1 \;\text{for all $n \in \mathbb{Z}$}\}. 
\] 
We require that each row and column of $Q$ contains at least one nonzero entry. We let $\hat{f}: \hat{\Sigma} \rightarrow \hat{\Sigma}$ be the left-shift map defined by $\hat{f}(x_{n})_{n \in \mathbb{Z}} = (x_{n+1})_{n \in \mathbb{Z}}$.  We will always assume that $\hat{f}$ is topologically transitive on $\hat{\Sigma}$. We let
\[
\Sigma^{u} = \{(x_{n})_{n \geq 0}: q_{x_{n}x_{n+1}} = 1 \;\text{for all $n \geq 0$}\},
\]
\[
\Sigma^{s} = \{(x_{n})_{n \leq 0}: q_{x_{n}x_{n+1}} = 1 \;\text{for all $n \leq -1$}\}.
\] 
We have projections $P^{u}: \hat{\Sigma} \rightarrow \Sigma^{u}$ and $P^{s}: \hat{\Sigma} \rightarrow \Sigma^{s}$ obtained by dropping all of the negative coordinates and all of the positive coordinates, respectively, of a sequence in $\hat{\Sigma}$. We let $f_{s}$ and $f_{u}$ denote the right and left shifts on $\Sigma^{s}$ and $\Sigma^{u}$, respectively. 

We define the \emph{local stable set} of $\hat{x} \in \hat{\Sigma}$ to be 
\[
W^{s}_{loc}(\hat{x}) = \{(y_{n})_{n \in\mathbb{Z}} \in \hat{\Sigma}:x_{n}  = y_{n} \; \text{for all $n \geq 0$}\},
\] 
and the \emph{local unstable set} to be 
\[
W^{u}_{loc}(\hat{x}) = \{(y_{n})_{n \in \mathbb{Z}} \in \hat{\Sigma}:x_{n}  = y_{n} \; \text{for all $n \leq 0$}\}.
\]
We think of $\Sigma^{s}$ and $\Sigma^{u}$, respectively, as parametrizations of the local stable and unstable sets. We define 
\[
\Omega^{s} = \{(\hat{x},\hat{y}) \in \hat{\Sigma} \times \hat{\Sigma}: \hat{y} \in W^{s}_{loc}(\hat{x})\},
\]
\[
\Omega^{u} = \{(\hat{x},\hat{y}) \in \hat{\Sigma} \times \hat{\Sigma}: \hat{y} \in W^{u}_{loc}(\hat{x})\}.
\]
$\Omega^{s}$ and $\Omega^{u}$ can be expressed locally as the product of a cylinder in $\hat{\Sigma}$ with $\Sigma^{s}$ and $\Sigma^{u}$, respectively. For $x \in \Sigma^{u}$ we define $W^{s}_{loc}(x) = (P^{u})^{-1}(x)$ and for $y \in \Sigma^{s}$ we define $W^{u}_{loc}(y) = (P^{s})^{-1}(y)$. Observe that if $\hat{x} \in \hat{\Sigma}$, then $W^{s}_{loc}(\hat{x}) = W^{s}_{loc}(P^{u}(\hat{x}))$. 

Each $\theta \in (0,1)$ gives rise to a metric on $\hat{\Sigma}$,
\begin{displaymath}
d_{\theta}(\hat{x},\hat{y})=\theta ^{N(\hat{x},\hat{y})},\; \textrm{where} \; N(\hat{x},\hat{y})=\max \lbrace N\geq 0; x_n=y_n \; \textrm{for all} \mid n\mid <N \rbrace.
\end{displaymath}
These metrics are all H\"older equivalent to one another and thus each defines the same topology on $\hat{\Sigma}$. 

For $m \in \mathbb{Z}$ and $a_{0},\dots,a_{k} \in \{1,\dots,\ell\}$, we define the cylinder notation
\[
[m;a_{0},\dots,a_{k}] = \{\hat{x} \in \hat{\Sigma}: x_{m+i} = a_{m+i}, \, 0 \leq i \leq k\}.
\]
%By the $0$-cylinder of $x\in \Sigma$ we mean the set $[0; x_0]$.  

\subsection{Stable and unstable holonomies}\label{sec: holonomies}
A $d$-dimensional \emph{linear cocycle} $\hat{A}$ over $\hat{f}$ is a map $\hat{A}: \hat{\Sigma} \rightarrow GL(d,\mathbb{R})$. 

\begin{definition}\label{defn: invariant holonomies}
A \emph{stable holonomy} for a linear cocycle $\hat{A}$ over $\hat{f}$ is a collection of linear maps $H^{s,\hat{A}}_{\hat{x} \hat{y}} \in GL(d,\mathbb{R})$ defined for $\hat{y} \in W^{s}_{loc}(\hat{x})$ which satisfy the following properties, 
\begin{itemize}
\item 
$H^{s,\hat{A}}_{\hat{y} \hat{z}}=H^{s,\hat{A}}_{\hat{x}\hat{z}}H^{s,\hat{A}}_{\hat{y}\hat{x}} \quad \textrm{and} \quad H^{s,\hat{A}}_{\hat{x} \hat{x}} = Id$;

\item 
$H^{s,\hat{A}}_{\hat{f}(\hat{y})\hat{f}(\hat{z})}=\hat{A}(\hat{z})H^{s,\hat{A}}_{\hat{y}\hat{z}}\hat{A}(\hat{y})^{-1};$

\item
The map $\Omega^{s} \times \mathbb{R}^{d} \rightarrow GL(d,\mathbb{R})$ given by $(\hat{x},\hat{y},v) \rightarrow H^{s,\hat{A}}_{\hat{x}\hat{y}}(v)$ is continuous. 

\end{itemize}

\end{definition}
By replacing $\hat{f}$ and $\hat{A}$ with the inverse cocycle $\hat{A}^{-1}$ over $\hat{f}^{-1}$, we get an analogous definition of unstable holonomies $H^{u,\hat{A}}_{\hat{x}\hat{y}}$ for $\hat{y} \in W^{u}_{loc}(\hat{x})$.

Stable and unstable holonomies for linear cocycles are not unique in general, even if the cocycle is locally constant (see \cite{KS}). To circumvent this issue we define a \emph{cocycle with holonomies} to be a triple $(\hat{A},H^{s,\hat{A}},H^{u,\hat{A}})$ where $\hat{A}$ is a linear cocycle over $\hat{f}$ and $H^{s,\hat{A}}$ and $H^{u,\hat{A}}$ are a stable and unstable holonomy for $\hat{A}$, respectively. We let $\mathcal{H}$ denote the space of all cocycles with holonomies, endowed with the subspace topology given by the inclusion
\[
\mathcal{H} \hookrightarrow C^{0}(\hat{\Sigma},GL(d,\mathbb{R})) \times C^{0}(\Omega^{s},GL(d,\mathbb{R})) \times C^{0}(\Omega^{u},GL(d,\mathbb{R})),
\]
where $\mathcal{H}$ is cut out by the linear equations in Definition \ref{defn: invariant holonomies} and these spaces of maps have the uniform topology. This means that a sequence of cocycles with holonomies $\{(\hat{A}_{n},H^{s,\hat{A}_{n}},H^{u,\hat{A}_{n}})\}_{n \in \mathbb{N}}$ converges to $(\hat{A},H^{s,\hat{A}},H^{u,\hat{A}})$ if $\hat{A}_{n} \rightarrow \hat{A}$ uniformly and the stable and unstable holonomies converge uniformly on local stable and unstable sets, respectively. 

\begin{definition}
A sequence of linear cocycles $\{\hat{A}_{n}\}_{n \in \mathbb{N}}$ over $\hat{f}$ \emph{converges uniformly with holonomies} to a linear cocycle $\hat{A}$ if for each $n$ there is a triple $(\hat{A}_{n},H^{s,\hat{A}_{n}},H^{u,\hat{A}_{n}}) \in \mathcal{H}$ such that this sequence converges in $\mathcal{H}$ to a triple $(\hat{A},H^{s,\hat{A}},H^{u,\hat{A}})$ defining a stable and unstable holonomy for $\hat{A}$. 
\end{definition}

\begin{remark}
If $\hat{A}$ is $\alpha$-H\"older continuous and $\alpha$-fiber-bunched (see Definition \ref{defn: fiber bunched}) or locally constant, there is a canonical stable holonomy for $\hat{A}$ defined by the formula
\begin{displaymath}
H^{s,\hat{A}}_{\hat{x}\hat{y}} = \lim _{n\rightarrow \infty}\hat{A}^n(\hat{y})^{-1}\hat{A}^n(\hat{x}), \; \; \hat{y} \in W^{s}_{loc}(\hat{x}).
\end{displaymath}
Our definition of stable and unstable holonomies is more general and does not imply that the sequence on the right converges. 
\end{remark}

\subsection{Product structure of measures}\label{sec: prod} 
For an $\hat{f}$-invariant measure $\hat{\mu}$ on $\hat \Sigma$ we let $\mu^{u} = P^{u}_{*}\hat{\mu}$ and $\mu^{s}= P^{s}_{*}\hat{\mu}$. The map $[0;i] \rightarrow P^{s}([0;i]) \times P^{u}([0;i]) $ induced by $\hat{x} \rightarrow (P^{s}(\hat{x}),P^{u}(\hat{x}))$ is a homeomorphism. We say that an $\hat{f}$-invariant measure $\hat{\mu}$ on $\hat{\Sigma}$ has \emph{local product structure} if there is a positive continuous function $\psi: \hat{\Sigma} \rightarrow (0,\infty)$ such that the restriction is of the form 
\[
\hat{\mu}|_{[0;i]} = \psi (\mu^{s}|_{P^{s}([0;i]) } \times \mu^{u}|_{P^{u}([0;i] )}).
\]

A \emph{Jacobian} of the  measure $\mu^u$ \emph{with respect to the dynamics $f_u$} is  the measurable function $J_{\mu^u} f_u$ such that $${d \left((f_u)_* (\mu^u |_{[0;i]} ) \right)(f(y)) = (J_{\mu^u} f_u(y))^{-1} \  d \mu^u(f(y)).}$$ 
%$$\cancel{\int \phi(f_{u}(y)) \ d \mu^u(y)  = \int \phi(y) J_{\mu^u} f_u(y) \  d \mu^u (y)}$$
%$${\red \int_{[0;i]} \phi(f_{u}(y))  \ d \mu^u(y)  = \int_{[0;i]}  \phi(f_u(y))  J_{\mu^u} f_u(y) \  d \mu^u (f_u(y))}$$
%\sout {for any bounded measurable $\varphi \colon\Sigma\to \R$. } 
A Jacobian of $\mu^{s}$ with respect to $f_{s}$ is defined similarly. 

Lemmas \ref{lemma: Jacobians} and \ref{lemma:continuity mu} below give consequences of the existence of local product structure for $\mu$ which are well known, see for instance \cite[Lemmas 2.1, 2.2]{BV}. We reproduce the proofs here to indicate explicitly how the local product structure of $\hat{\mu}$ is used; in particular, we emphasize in the proofs that the Jacobians and disintegrations constructed depend continuously on the function $\psi$ which gives the local product structure of $\hat{\mu}$ via explicit formulas. 

\begin{lemma}\label{lemma: Jacobians}
Assume $\hat \mu$ has local product structure.
Then the measure $\mu ^u$ admits a continuous positive Jacobian $J_{\mu ^u}f_{u}$ with respect to the map $f_u$. Similarly $\mu ^{s}$ admits a continuous positive Jacobian $J_{\mu ^s}f_{s}$ with respect to the map $f_{s}$. 
\end{lemma}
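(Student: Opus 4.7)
The plan is to exploit the $\hat{f}$-invariance of $\hat{\mu}$ together with the local product formula on each cylinder $[0;j]$, extracting the Jacobian by integrating out the stable factor.

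First I would combine the intertwining $P^{u}\circ\hat{f}=f_{u}\circ P^{u}$ with the $\hat{f}$-invariance of $\hat{\mu}$ to write, for any Borel $B\subset\Sigma^{u}$,
\[
\mu^{u}(B)\;=\;\hat{\mu}\bigl((P^{u})^{-1}(B)\bigr)\;=\;\hat{\mu}\bigl((P^{u})^{-1}(f_{u}^{-1}(B))\bigr),
\]
and decompose the right-hand side along the finite disjoint cover $\hat{\Sigma}=\bigsqcup_{j}[0;j]$. On each cylinder, the product identification $[0;j]\cong P^{s}([0;j])\times[j]$, together with the injectivity of $f_{u}$ on $[j]\subset \Sigma^u$, sends $(P^{u})^{-1}(f_{u}^{-1}(B))\cap[0;j]$ to $P^{s}([0;j])\times(f_{u}|_{[j]})^{-1}(B)$. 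Applying the local product formula $\hat{\mu}|_{[0;j]}=\psi\cdot(\mu^{s}|_{P^{s}([0;j])}\times\mu^{u}|_{[j]})$ and integrating out the stable coordinate then yields
\[
\hat{\mu}\bigl([0;j]\cap(P^{u})^{-1}(f_{u}^{-1}(B))\bigr)\;=\;\int_{(f_{u}|_{[j]})^{-1}(B)}J_{j}(z)\,d\mu^{u}(z),\qquad J_{j}(z):=\int_{P^{s}([0;j])}\psi(y,z)\,d\mu^{s}(y).
\]

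Next I would sum over $j$ and glue the $J_{j}$ into a single function $J\colon\Sigma^{u}\to(0,\infty)$ by setting $J(z):=J_{z_{0}}(z)$. The disjointness of the cylinders $[j]\subset\Sigma^u$ then gives $\mu^{u}(B)=\int_{f_{u}^{-1}(B)}J\,d\mu^{u}$, equivalently $\int \phi\,d\mu^{u} = \int (\phi\circ f_{u})\,J\,d\mu^{u}$ for bounded measurable $\phi$; this is the Jacobian identity asserted in the lemma. Continuity of each $J_{j}$ on $[j]$ follows from continuity of $\psi$ on the compact set $[0;j]$ via dominated convergence, and positivity from $\psi>0$ together with $\mu^{s}(P^{s}([0;j]))=\hat{\mu}([0;j])>0$. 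Since the cylinders $[j]$ are clopen and finite in number, $J$ is continuous and positive on $\Sigma^{u}$. The parallel statement for $\mu^{s}$ and $f_{s}$ is obtained by running the same argument with $\hat{f}^{-1}$ and $P^{s}$ in place of $\hat{f}$ and $P^{u}$.

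The argument is essentially a direct bookkeeping exercise, so no serious obstacle is anticipated. The only care required is the compatibility of the product identification on $[0;j]\subset\hat{\Sigma}$ with the action of $f_{u}$ on $\Sigma^{u}$, together with the fact that $f_{u}|_{[j]}$ is a homeomorphism onto its image — both automatic in the subshift-of-finite-type setting.
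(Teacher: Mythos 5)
There is a genuine gap: the function $J$ you extract is identically $1$, so your argument only re-proves the $f_{u}$-invariance of $\mu^{u}$ and never produces the Jacobian the lemma (and the rest of the paper) actually needs. Indeed, the local product formula $\hat{\mu}|_{[0;j]}=\psi\cdot(\mu^{s}|_{P^{s}([0;j])}\times\mu^{u}|_{[j]})$, together with $\mu^{u}=P^{u}_{*}\hat{\mu}$, forces $\int_{P^{s}([0;j])}\psi(y,z)\,d\mu^{s}(y)=1$ for $\mu^{u}$-a.e.\ $z\in[j]$ (test against $(P^{u})^{-1}(E)$ for $E\subset[j]$). Hence your $J_{j}\equiv1$, and the identity $\mu^{u}(B)=\int_{f_{u}^{-1}(B)}J\,d\mu^{u}$ collapses to $\mu^{u}(B)=\mu^{u}(f_{u}^{-1}(B))$, i.e.\ invariance. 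The Jacobian that is actually used later (the transfer-operator kernel in the proof of Lemma~\ref{states} and the formula in Remark~\ref{remark: property of mu}) is the Rokhlin Jacobian, characterized by $\mu^{u}(f_{u}(D))=\int_{D}J_{\mu^{u}}f_{u}\,d\mu^{u}$ for sets $D$ on which $f_{u}$ is injective; it is not identically $1$. (The displayed defining identity in the paper, $\int\phi\circ f_{u}\,d\mu^{u}=\int\phi\cdot J\,d\mu^{u}$, is itself misstated --- it too would force $J\equiv1$ for an invariant measure --- but the proof and the later applications make clear what is meant.)

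The step you are missing is that the product structure, applied directly to a full rectangle of the form $(P^{u})^{-1}(\cdot)$, always integrates the stable factor to $1$; nothing nontrivial can come out of that. To get the Jacobian one must compare $\mu^{u}(D)$ with $\mu^{u}(f_{u}(D))$ for a small set $D\subset[i]$, and apply the $\hat{f}$-invariance of $\hat{\mu}$ in the form $\hat{\mu}\bigl((P^{u})^{-1}(D)\bigr)=\hat{\mu}\bigl(\hat{f}((P^{u})^{-1}(D))\bigr)$. The crucial point, which the paper exploits, is that $\hat{f}\bigl((P^{u})^{-1}(D)\bigr)$ is only a \emph{partial} rectangle inside $[0;j]$: its unstable footprint is $f_{u}(D)$, but its stable coordinate is constrained by $z_{-1}=i$. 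Integrating the stable factor of this constrained set gives $\int_{\{z_{-1}=i\}}\psi(\cdot,z)\,d\mu^{s}(z)$, which is in general strictly less than $1$, whereas the stable integral computing $\mu^{u}(f_{u}(D))$ is the full one and equals $1$. Shrinking $D$ to a point $y$ and taking the ratio yields the nontrivial Rokhlin Jacobian $J_{\mu^{u}}f_{u}(y)=\bigl(\int_{\{z_{-1}=y_{0}\}}\psi(f_{u}(y),z)\,d\mu^{s}(z)\bigr)^{-1}$, continuous and positive by continuity and positivity of $\psi$ together with $\mu^{s}(\{z_{-1}=y_{0}\})>0$ for fully supported $\hat\mu$.
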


\begin{proof}
Let $y\in \Sigma^{u}$ and $D$ be any measurable set containing $y$ and contained in a cylinder $[0;i,j]$. Thus, by definition,
\begin{displaymath}
\mu^{u} (f_{u}(D))=\hat{\mu}((P^u)^{-1}(f_{u}(D)))=\int _{\lbrace x\in f_{u}(D)\rbrace }\psi (x,z)d\mu^u (x)d\mu ^s(z) 
\end{displaymath}
and moreover,
\begin{displaymath}
\mu^u(D)=\hat{\mu}((P^u)^{-1}(D))=\hat{\mu}(\hat{f}((P^u)^{-1}(D)))=\int _{\lbrace x\in f_{u}(D), z_{-1}=i \rbrace }\psi (x,z)d\mu^u (x)d\mu ^s(z)
\end{displaymath}
where in the second equality we have used the $\hat{f}$-invariance of $\hat{\mu}$. Now, letting $D$ shrink to $\{y\}$ we get that
\begin{displaymath}
\dfrac{\mu^u(f_{u}(D))}{\mu^u(D)}\rightarrow \dfrac{1}{\int _{\lbrace z_{-1}=i \rbrace }\psi (f_{u}(y),z)d\mu ^s(z)}.
\end{displaymath}
Defining $J_{\mu ^u}f_{u}(y):=\dfrac{1}{\int _{\lbrace z_{-1}=i \rbrace }\psi (f_{u}(y),z)d\mu ^s(z)}$, which is clearly positive and continuous, we get the desired result. An analogous proof replacing $f_{u}$ by $f_{s}$ shows that $f_{s}$ admits a continuous positive Jacobian $J_{\mu ^s}f_{s}$ with respect to $\mu^{s}$. 
\end{proof}

Given $x, y \in \Sigma^{u}$ in the same cylinder $P^{u}([0;i])$, we define the \textit{unstable holonomy map} $h_{x,y}:W^s_{\loc}(x)\rightarrow W^s_{\loc}(y)$, by assigning to each $\hat{x}\in W^s_{\loc}(x)$ the unique $\hat{y}=h_{x,y}(\hat{x})\in W^s_{\loc}(y)$ with $\hat{y} \in W^{u}_{\loc}(\hat{x})$. 

The partition of $(\Sigma,\hat \mu)$ into local stable manifolds is a measurable partition and thus 
induces a disintegration into a family of  conditional measures $\{\hat{\mu}_x\}_{x\in \Sigma^{u}}$ with each $\hat \mu_x$ supported on $W^s_{\loc}(x)$.  All such families agree up to null sets. 

Using the local product structure of the measure $\hat{\mu}$ we have the following.  

\begin{lemma}\label{lemma:continuity mu} Assume $\hat \mu$ has local product structure.  
Then the measure $\hat{\mu}$ has a disintegration into conditional measures $\{\hat{\mu}_x\}_{x\in \Sigma^{u}}$ that vary continuously with $x$ in the weak-$*$ topology. In fact, for every $x, y \in \Sigma^{u}$ in the same cylinder $[0;i]$,
\begin{displaymath}
h_{x,y}:(W^s_{\loc}(x),\hat{\mu}_x))\rightarrow (W^s_{\loc}(y),\hat{\mu}_y)
\end{displaymath} 
is absolutely continuous, with Jacobian $R_{x,y}$ depending continuously on $(x,y)$. 
\end{lemma}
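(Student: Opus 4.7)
The plan is to reduce everything to Fubini on a single cylinder. Fix $[0;i]$; the map $\hat z \mapsto (P^s(\hat z), P^u(\hat z))$ is a homeomorphism $[0;i] \to P^s([0;i]) \times P^u([0;i])$, under which the local product hypothesis reads
$$d\hat\mu|_{[0;i]}(s,u) = \psi(s,u)\, d\mu^s(s)\, d\mu^u(u).$$
Since $W^s_{\loc}(x) = (P^u)\inv(x)$ for $x \in P^u([0;i])$, the local stable sets inside $[0;i]$ are precisely the vertical fibers $P^s([0;i]) \times \{x\}$, so the measurable partition of $\hat\Sigma$ by local stable sets agrees on $[0;i]$ with the product fiber partition. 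Taking the $P^u$-marginal of the displayed equality and comparing with $P^u_*\hat\mu = \mu^u$ forces $\Psi(x) := \int \psi(s,x)\, d\mu^s(s) = 1$ for $x \in \operatorname{supp}\mu^u$; I keep $\Psi$ as an explicit normalizing factor to stay safe.

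First I would read off the disintegration from Fubini: the conditional probability measures are
$$\hat\mu_x = \frac{1}{\Psi(x)}\,\psi(\cdot, x)\, d\mu^s\big|_{P^s([0;i])}, \qquad x \in P^u([0;i]).$$
Weak-$*$ continuity in $x$ is then immediate: for any $\phi \in C(\hat\Sigma)$ the map
$$x \mapsto \int \phi \, d\hat\mu_x = \frac{1}{\Psi(x)}\int_{P^s([0;i])} \phi(s,x)\,\psi(s,x)\, d\mu^s(s)$$
is continuous by dominated convergence, using that $\phi, \psi$ are continuous and bounded on the compact cylinder and $\Psi$ is continuous and positive. Since $\hat\Sigma$ is covered by the finite (clopen) family of $1$-cylinders $[0;i]$, this yields global weak-$*$ continuity of $x \mapsto \hat\mu_x$.

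For the Jacobian I would observe that in product coordinates the unstable holonomy $h_{x,y} \colon W^s_{\loc}(x) \to W^s_{\loc}(y)$ is exactly the ``identity on the $s$-coordinate'' map sending $(s,x)$ to $(s,y)$: indeed, $h_{x,y}(\hat z)$ is the unique point of $W^s_{\loc}(y) \cap W^u_{\loc}(\hat z)$, and $W^u_{\loc}(\hat z)$ is characterized by $P^s$ being fixed equal to $P^s(\hat z)$. Consequently $(h_{x,y})_*\hat\mu_x$ has density $\psi(s,x)/\Psi(x)$ with respect to $d\mu^s$ on $W^s_{\loc}(y)$, while $\hat\mu_y$ has density $\psi(s,y)/\Psi(y)$, so
$$R_{x,y}(\hat w) := \frac{d(h_{x,y})_*\hat\mu_x}{d\hat\mu_y}(\hat w) = \frac{\psi(P^s(\hat w), x)\,\Psi(y)}{\psi(P^s(\hat w), y)\,\Psi(x)},$$
which is visibly jointly continuous in $(x,y,\hat w)$ and in particular continuous in $(x,y)$ in the $C^0$ norm. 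The whole argument has no real obstacle beyond the bookkeeping of identifying $W^s_{\loc}(x)$ with a vertical fiber of the product decomposition and $h_{x,y}$ with the projection parallel to that fiber; once those identifications are in place, continuity of $\psi$ and dominated convergence on the compact cylinders do the rest.
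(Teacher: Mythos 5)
Your proof is correct and follows essentially the same route as the paper's: read off the conditional measures from the local product formula $\hat\mu|_{[0;i]} = \psi \cdot (\mu^s \times \mu^u)$, note that $P^u_*\hat\mu = \mu^u$ normalizes the fiber integral $\Psi(x) = \int \psi(s,x)\,d\mu^s(s)$ to $1$, and observe that $h_{x,y}$ acts as the identity on the $s$-coordinate, so the Jacobian is a ratio of $\psi$-values. The only thing worth flagging is a convention mismatch: you define $R_{x,y}$ as $\dfrac{d(h_{x,y})_*\hat\mu_x}{d\hat\mu_y}$, a function on $W^s_{\loc}(y)$, whereas the paper's $R_{x,y}$ lives on $W^s_{\loc}(x)$ and is fixed by the change-of-variables identity
\[
\int_{W^s_{\loc}(y)} F\,d\hat\mu_y = \int_{W^s_{\loc}(x)} (F\circ h_{x,y})\,R_{x,y}\,d\hat\mu_x,
\]
which yields $R_{x,y}(\hat x) = \psi(h_{x,y}(\hat x))/\psi(\hat x)$. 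Your $R_{x,y}$ is the reciprocal of this (after identifying the two fibers via $h_{x,y}$); the latter convention is the one the paper actually uses in the proof of Proposition \ref{prop: continuity of A mu invariant}, so if you intend your $R_{x,y}$ to be dropped into that argument you would need to invert it. The continuity claim is of course insensitive to the choice. Keeping $\Psi$ as an explicit normalizer is harmless bookkeeping; the paper simply asserts $\Psi\equiv 1$, which relies on $\mu^u = P^u_*\hat\mu$ exactly as you note.
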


\begin{proof}
For each $i \in \{1,\dots,\ell\}$, the local product structure of $\hat{\mu}$ allows us to express $\hat{\mu}|_{[0;i]}$ as $\psi \cdot (\mu^{s}|_{P^{s}([0;i])} \times \mu^{u}|_{P^{u}([0;i])})$ for a positive continuous function $\psi$. Since $\mu ^u=P^u_{\ast}\hat{\mu}$ we have that $\int _{W^s_{loc}(x)}\psi (\hat{x})d\mu ^s(\hat{x})=1$ on every local stable manifold and thus $\hat{\mu}_x=\psi (\hat{x}) \mu ^s$ and $R_{x,y}(\hat{x})=\dfrac{\psi (h_{x,y}(\hat{x}))}{\psi (\hat{x})}$ define a disintegration of $\hat{\mu}$ and a Jacobian for $h_{x,y}$ as we want. 
\end{proof}

\begin{remark}\label{remark: property of mu} 
Observe that, with the above disintegration of $\hat{\mu}$ and the Jacobians given in Lemma \ref{lemma: Jacobians}, we  have that 
\begin{displaymath}
\hat{\mu}_{f_u^n(y)}\mid _{\hat{f}^n(W^s_{\loc}(y))}=\dfrac{1}{J_{\mu ^u}f_u^n(y)}\hat{f}^n_{\ast}\hat{\mu}_y
\end{displaymath}
for every $y\in \Sigma ^u$.
\end{remark}

In order to state the Main theorem we need to formulate a notion of convergence of probability measures on $\hat{\Sigma}$ which is stronger than weak-$*$-convergence. We say that a sequence of $\hat{f}$-invariant probability measures $\{\hat{\mu}_{k}\}_{k \in \mathbb{N}}$ with local product structure converges to an $\hat{f}$-invariant measure $\hat{\mu}$ with local product structure if $\hat{\mu}_{k}$ converges to $\hat{\mu}$ in the weak-$*$ topology on probability measures on $\hat{\Sigma}$ and the positive continuous functions $\psi_{k}$ defining the local product structure of $\hat{\mu}_{k}$ converge uniformly to the function $\psi$ defining the local product structure of $\hat{\mu}$. Uniform convergence of $\psi_{k}$ to $\psi$ together with the weak-* convergence of $\hat{\mu}_{k}$ implies that the sequences of stable and unstable Jacobians $\{J_{\mu_{k}^{u}}f_{u}\}_{k \in \mathbb{N}}$ and $\{J_{\mu_{k}^{s}}f_{s}\}_{k \in \mathbb{N}}$ converge uniformly to $J_{\mu^{u}}f_{u}$ and $J_{\mu^{s}}f_{s}$, respectively, and that the conditional measures $\hat{\mu}_{x}^{k}$ of $\hat{\mu}_{k}$ along $\Sigma^{u}$ converge uniformly to the disintegration of $\hat{\mu}$.

As a shorthand for this notion of convergence we will say that ``$\hat{\mu}_{k}$ converges to $\hat{\mu}$ as in Section \ref{sec: prod}". A useful criterion for checking this notion of convergence as well as the existence of local product structure is given in the next lemma. 

\begin{lemma}\label{lemma: measure convergence}
Let $\hat{\mu}$ be an ergodic, fully supported probability measure on $\hat{\Sigma}$. Suppose that the projected measure $\mu^{u} = P^{u}_{*}\hat{\mu}$ admits a positive $\beta$-H\"older continuous Jacobian $ J_{\mu^{u}}f_{u}$ in the $\beta$-H\"older norm with respect to $f_{u}$. Then $\hat{\mu}$ has local product structure given by a function $\psi: \hat{\Sigma} \rightarrow (0,\infty)$ and moreover $\psi$ depends continuously on $ J_{\mu^{u}}f_{u}$ in the $\beta$-H\"older norm. 
\end{lemma}

\begin{proof}
The assertion that $\hat{\mu}$ admits local product structure follows from \cite[Lemmas 2.4, 2.6]{BV} since the Jacobian $J_{\mu^{u}}f_{u}$ is assumed to be H\"older continuous. To establish that $\psi$ depends continuously on  $ J_{\mu^{u}}f_{u}$, we recall the formula for $\psi$ derived in the course of the proof. 

Fix points $z_{i} \in P^{u}([0;i])$ for $1 \leq i \leq \l$. The construction of the local product structure in \cite{BV} gives the following formula for $\psi$: for $\hat{x} \in [0;i]$, 
\[
\psi(\hat{x}) = \lim_{n \rightarrow \infty} \frac{J_{\mu^{u}}f_{u}(P^{u}(\hat{f}^{n}(h_{x,z_{i}}(\hat{x}))))}{J_{\mu^{u}}f_{u}(P^{u}(\hat{f}^{n}(\hat{x})))}
\]
if we identify $\mu^{u}$ with the measure $\hat{\mu}_{z_{i}}$ from Lemma \ref{lemma:continuity mu}. A standard argument using distortion estimates shows that the limit on the right side exists and depends continuously on the function $J_{\mu^{u}}f_{u}$ in the $\beta$-H\"older topology, see \cite[Lemma 2.4]{BV} or the arguments at the beginning of the proof of Lemma \ref{lemma: equilibrium states}.
\end{proof}

As a consequence, if $\hat{\mu}_{k} \rightarrow \hat{\mu}$ is a sequence of measures converging in the weak-* topology all of which are ergodic, fully supported, and have local product structure, and moreover the Jacobians $J_{\mu^{u}_{k}}f_{u}$ are $\beta$-H\"older continuous and converge in the $\beta$-H\"older norm to $J_{\mu^{u}}f_{u}$, then $\hat{\mu}_{k}$ converges to $\hat{\mu}$ as in Section \ref{sec: prod}.

\subsection{Main  theorem}

For a continuous cocycle $\hat{A}$ over $\hat{f}$ and an $\hat{f}$-invariant probability measure $\hat{\mu}$ on $\hat{\Sigma}$, it follows by the Kingman Sub-Additive Ergodic Theorem    (  \cite{K}) % or \cite{AB}) 
that 
\[
\lambda _+(\hat{A},\hat{x})= \lim _{n\rightarrow \infty}\dfrac{1}{n}\log \|\hat{A}^n(\hat{x})\|
\]
and
\[
\lambda _-(\hat{A},\hat{x})= \lim _{n\rightarrow \infty}\dfrac{1}{n}\log \| (\hat{A}^n(\hat{x}))^{-1}\|^{-1}
\]
are well-defined at $\hat{\mu}$ almost every point $\hat{x} \in \hat{\Sigma}$. These are the (extremal) \emph{Lyapunov exponents} of $\hat{A}$. These functions are $\hat{f}$-invariant and hence, if $\hat{\mu}$ is ergodic with respect to $\hat{f}$, these functions are constant $\hat{\mu}$-a.e. In this case,   we define $\lambda _+(\hat{A},\hat{\mu})$ and $\lambda _{-}(\hat{A},\hat{\mu})$ to be the $\hat{\mu}$-a.e.\ constant values of the extremal Lyapunov exponents.

The main theorem of the paper is a criterion for joint continuity of the Lyapunov exponents $\lambda _+(\hat{A},\hat{\mu})$ and $\lambda _{-}(\hat{A},\hat{\mu})$ in the cocycle $\hat{A}$ and the measure $\hat{\mu}$ in the case when $\hat{A}$ is 2-dimensional. 

\begin{theorem}\label{mainthm}
Let $\{\hat{A}_{n}\}_{n \in \mathbb{N}}$ be a sequence of 2-dimensional linear cocycles over $\hat{f}$ converging uniformly with holonomies to a cocycle $\hat{A}$ and $\{\hat{\mu}_{n}\}_{n \in \mathbb{N}}$ a sequence of fully supported, ergodic, $\hat{f}$-invariant probability measures converging as in Section \ref{sec: prod} to an ergodic, $\hat{f}$-invariant measure $\hat{\mu}$ with local product structure and full support. Then $\lambda_{+}(\hat{A}_{n},\hat{\mu}_{n}) \rightarrow \lambda_{+}(\hat{A},\hat{\mu})$ and $\lambda_{-}(\hat{A}_{n},\hat{\mu}_{n}) \rightarrow \lambda_{-}(\hat{A},\hat{\mu})$. 
\end{theorem}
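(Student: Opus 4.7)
The plan is to reduce to a single semicontinuity inequality and then argue by contradiction on the projective bundle. Kingman's theorem writes $\lambda_+(\hat{A},\hat{\mu}) = \inf_k \tfrac{1}{k}\int \log\|\hat{A}^k\|\,d\hat{\mu}$ as an infimum of functionals which, by uniform convergence $\hat{A}_n\to\hat{A}$ and weak-$*$ convergence of the measures, are continuous in $(\hat{A},\hat{\mu})$; this gives upper semicontinuity of $\lambda_+$, and applying the same reasoning to $\hat{A}^{-1}$ over $\hat{f}^{-1}$ yields lower semicontinuity of $\lambda_-$. Since $\lambda_+ + \lambda_- = \int \log|\det\hat{A}|\,d\hat{\mu}$ varies continuously in $(\hat{A},\hat{\mu})$, the theorem reduces to the single inequality $\limsup_n \lambda_-(\hat{A}_n,\hat{\mu}_n)\leq\lambda_-(\hat{A},\hat{\mu})$. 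The degenerate case $\lambda_+(\hat{A},\hat{\mu})=\lambda_-(\hat{A},\hat{\mu})$ is immediate from what is already established, so I may assume $\hat{A}$ has simple spectrum with Oseledets splitting $\hat{\Sigma}\times\mathbb{R}^2=E^u\oplus E^s$.

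\textbf{Projective lift and limit measure.} Suppose for contradiction that, along a subsequence, $\lambda_-(\hat{A}_n,\hat{\mu}_n)\to\ell > \lambda_-(\hat{A},\hat{\mu})$. I would lift each $\hat{A}_n$ to the projective cocycle $\hat{F}_n$ on $\hat{\Sigma}\times\mathbb{P}^1$ and form the $\hat{F}_n$-invariant measure $\hat{m}_n := \int\delta_{(\hat{x},E^s_n(\hat{x}))}\,d\hat{\mu}_n(\hat{x})$ supported on the graph of the most-contracted Oseledets direction of $\hat{A}_n$; this measure projects to $\hat{\mu}_n$ and integrates the fiber log-norm to $\lambda_-(\hat{A}_n,\hat{\mu}_n)$. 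Comparing growth rates in the cohomology relation
\[
\hat{A}_n^k(\hat{z})\,H^{s,\hat{A}_n}_{\hat{y}\hat{z}} = H^{s,\hat{A}_n}_{\hat{f}^k\hat{y},\hat{f}^k\hat{z}}\,\hat{A}_n^k(\hat{y})
\]
yields $H^{s,\hat{A}_n}_{\hat{y}\hat{z}}E^s_n(\hat{y}) = E^s_n(\hat{z})$, so each $\hat{m}_n$ is invariant under the stable holonomies of $\hat{A}_n$. By weak-$*$ compactness I extract a subsequential limit $\hat{m}$; uniform convergence $\hat{A}_n\to\hat{A}$ makes $\hat{m}$ an $\hat{F}$-invariant measure projecting to $\hat{\mu}$ with $\int\log(\|\hat{A}v\|/\|v\|)\,d\hat{m} = \ell$. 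Uniform convergence of the product densities $\psi_n\to\psi$, combined with Lemma \ref{lemma:continuity mu}, gives continuous convergence of the stable-fiber conditional measures of $\hat{m}_n$, and uniform convergence $H^{s,\hat{A}_n}\to H^{s,\hat{A}}$ on $\Omega^s$ then lets the stable-holonomy invariance pass to $\hat{m}$.

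\textbf{Rigidity and contradiction.} A standard application of Oseledets and Birkhoff on the projective bundle, together with ergodicity of $\hat{\mu}$ and simple spectrum of $\hat{A}$, shows that the only $\hat{F}$-ergodic measures projecting to $\hat{\mu}$ are $\hat{\mu}\otimes\delta_{E^u}$ and $\hat{\mu}\otimes\delta_{E^s}$, so $\hat{m} = a(\hat{\mu}\otimes\delta_{E^u}) + (1-a)(\hat{\mu}\otimes\delta_{E^s})$ with $a\lambda_+(\hat{A},\hat{\mu}) + (1-a)\lambda_-(\hat{A},\hat{\mu})= \ell$, forcing $a>0$. Reading the holonomy identity $(H^{s,\hat{A}}_{\hat{x}\hat{y}})_*\hat{m}_{\hat{x}} = \hat{m}_{\hat{y}}$ atomically and using the already-known $H^sE^s=E^s$ forces $H^sE^u=E^u$, and the symmetric construction with the inverse cocycle $\hat{A}_n^{-1}$ over $\hat{f}^{-1}$ produces unstable-holonomy invariance of both Oseledets directions. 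Full support and local product structure of $\hat{\mu}$ (Lemmas \ref{lemma: Jacobians} and \ref{lemma:continuity mu}), topological transitivity of $\hat{f}$, and continuity of the holonomies on $\Omega^s,\Omega^u$ will then upgrade the measurable splitting $E^u\oplus E^s$ to a continuous $\hat{A}$-invariant splitting of $\hat{\Sigma}\times\mathbb{R}^2$ by propagating along stable--unstable zig-zag paths through cylinders. In this continuous splitting the cocycle is diagonal, so $\lambda_\pm(\hat{A},\hat{\mu}) = \int \log\|\hat{A}|_{E^{u/s}}\|\,d\hat{\mu}$, a functional manifestly continuous in $(\hat{A},\hat{\mu})$, contradicting $\ell>\lambda_-(\hat{A},\hat{\mu})$. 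I expect the main technical obstacle to be precisely the limit step of the second paragraph: weak-$*$ convergence $\hat{m}_n\to\hat{m}$ alone does not transport a disintegration-level identity, and the strong convergence hypothesis on $\hat{\mu}_n$ (via uniform convergence $\psi_n\to\psi$) together with the explicit disintegration formula of Lemma \ref{lemma:continuity mu} is exactly what promotes weak-$*$ convergence to continuous convergence of stable-fiber conditionals, so that stable-holonomy invariance passes to $\hat{m}$.
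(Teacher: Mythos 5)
Your setup and reductions are sound and mirror the paper's: semicontinuity of $\lambda_\pm$, reduction to $\lambda_-(\hat A,\hat\mu)<0<\lambda_+(\hat A,\hat\mu)$, construction of holonomy-invariant graph measures $\hat m_n$ over the Oseledets directions of $\hat A_n$, passage to a weak-$*$ limit $\hat m$ that is still holonomy invariant (this is the paper's Lemma \ref{states}, and you correctly identify it as a genuine step requiring the strong convergence of measures), the rigidity $\hat m = a\,\hat\mu\otimes\delta_{E^u} + (1-a)\,\hat\mu\otimes\delta_{E^s}$ with $a>0$ (Claim \ref{lemma:convex combination}), and the conclusion that both $\hat{\mu}\otimes\delta_{E^{u}}$ and $\hat{\mu}\otimes\delta_{E^{s}}$ are $su$-states, hence that the Oseledets splitting of $\hat A$ is a \emph{continuous} invariant splitting by Proposition \ref{Prop: continuous $su$-states}. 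This is exactly the paper's Proposition \ref{prop: characterization of disc. points} and the coordinate change at the end of Section \ref{sec: proof of the main theorem}.

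The gap is the final sentence: ``In this continuous splitting the cocycle is diagonal, so $\lambda_\pm(\hat A,\hat\mu)=\int\log\|\hat A|_{E^{u/s}}\|\,d\hat\mu$, a functional manifestly continuous in $(\hat A,\hat\mu)$.'' The displayed formula holds for $\hat A$ only because $\hat A$ preserves the splitting; it does \emph{not} hold for the perturbations $\hat A_n$, which have no reason to preserve $E^u\oplus E^s$. Hence nothing ``manifest'' can be read off: the quantity $\int\log\|\hat A_n|_{E^{u/s}}\|\,d\hat\mu_n$ is not $\lambda_\pm(\hat A_n,\hat\mu_n)$ for $n$ finite, and a continuous invariant splitting that is not \emph{dominated} is well known to be compatible with discontinuity of exponents (that is the very content of the Bochi--Ma\~n\'e phenomenon, which the introduction cites). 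Put differently, you have reduced the theorem to the claim ``a cocycle with a continuous, merely measurably hyperbolic invariant splitting that is preserved by both holonomies cannot be a discontinuity point for convergence with holonomies'' --- but that is essentially the theorem itself, not a corollary of having written $\hat A$ in diagonal form.

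What the paper does from this point on is the real work. Having the continuous $su$-state structure, it pushes the graph measures $\hat m_k$ down to measures $m_k$ on $\Sigma^u\times\mathbb P^1$ whose conditionals $m^k_x$ converge uniformly to $\alpha\delta_q+\beta\delta_p$ with $\alpha,\beta\in(0,1)$, and then splits into two cases. If some $m^k_x$ has an atom (Section \ref{sec:atomic case}), it shows the maximal atomic mass is attained everywhere, bounds the number of atoms by $2$ using Kalinin's periodic approximation theorem, and derives a contradiction by tracking the limiting directions against the invariance relations. If the $m^k_x$ are non-atomic (Section \ref{sec:non-atomic case}), it sets up symmetric self-couplings and an additive Margulis function $\phi(u,v)=-\log d(u,v)$, uses the fact that $q$ is an expanding fixed point of $\P A^N$ to produce a new coupling of strictly smaller energy (Proposition \ref{prop:key}), and iterates to a contradiction with $\phi\ge 0$. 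Neither of these is an obvious consequence of the diagonal normal form, and both use the full strength of the hypotheses (local product structure, convergence with holonomies, ergodicity of $\hat m_k$). Your proposal stops precisely where the proof becomes hard.
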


Theorem \ref{mainthm} provides an affirmative answer to \cite[Conjecture 10.13]{V2}.  
The proof of Theorem \ref{mainthm} begins in Section \ref{sec: preliminary}. We collect some corollaries of Theorem \ref{mainthm} in Section \ref{sec: cor} below. 

\section{Corollaries}\label{sec: cor}
In this section we demonstrate how to apply Theorem \ref{mainthm} to prove continuity of the Lyapunov exponents for certain classes of 2-dimensional linear cocycles over hyperbolic systems. We fix a $\theta \in (0,1)$ and for $\alpha > 0$ we let $C^{\alpha}(\hat{\Sigma},GL(d,\mathbb{R}))$ be the space of $\alpha$-H\"older continuous linear cocycles over the shift with respect to the metric $d_{\theta}$ on $\hat{\Sigma}$. $C^{\alpha}(\hat{\Sigma},GL(d,\mathbb{R}))$ is a Banach space with the $\alpha$-H\"older norm
\[
\|\hat{A}\|_{\alpha} = \sup_{\hat{x} \in \hat{\Sigma}}\|\hat{A}(\hat{x})\| + \sup_{\hat{x} \neq \hat{y} \in \hat{\Sigma}}\frac{\|\hat{A}(\hat{x})-\hat{A}(\hat{y})\|}{d_{\theta}(\hat{x},\hat{y})^{\alpha}}.
\]
\begin{definition}\label{defn: fiber bunched}
A linear cocycle $\hat{A}: \hat{\Sigma} \rightarrow GL(d,\mathbb{R}))$ is \emph{$\alpha$-fiber-bunched} if $\hat{A} \in C^{\alpha}(\hat{\Sigma},GL(d,\mathbb{R}))$ and there is an $N > 0$ such that 
\[
\|\hat{A}^{N}(\hat{x})\| \cdot \|(\hat{A}^{N}(\hat{x}))^{-1}\|^{-1} \cdot \theta^{-N\alpha} < 1
\]
for every $\hat{x} \in \hat{\Sigma}$. 
\end{definition}
The set of $\alpha$-fiber-bunched cocycles is open in $C^{\alpha}(\hat{\Sigma},GL(d,\mathbb{R}))$. 

For each H\"older continuous potential $\varphi: \hat{\Sigma} \rightarrow \mathbb{R}$ we may associate a unique \emph{equilibrium state} $\hat{\mu}_{\varphi}$ which is an ergodic, fully supported probability measure on $\hat{\Sigma}$ with local product structure \cite{Bow2, Lep}. The following lemma shows that H\"older-convergence of potentials implies convergence of equilibrium states as in Section \ref{sec: prod}. 

\begin{lemma}\label{lemma: equilibrium states}
If $\varphi_{k} \rightarrow \varphi$ in $C^{\beta}(\hat{\Sigma},\mathbb{R})$ for some $\beta > 0$ then $\hat{\mu}_{\varphi_{k}}$ converges to $\hat{\mu}_{\varphi}$ as in Section \ref{sec: prod}. 
\end{lemma}

\begin{proof}
We recall some well-known facts about equilibrium states which can be found in \cite{Bow2}. We first note that it suffices to prove the claim when the functions $\varphi_{k}$ are constant on the local stable sets of $\hat{f}$. For $\hat{x},\hat{y} \in [0;i]$ for some $1 \leq i \leq \ell$ we let $h^s_{\hat{x},\hat{y}}$ denote the stable holonomy from $W^{u}_{loc}(\hat{x})$ to $W^{u}_{loc}(\hat{y})$ which assigns to each $\hat{z}\in W^u_{\loc}(\hat{x})$ the unique point $h^s_{\hat{x},\hat{y}}(\hat{z})\in W^s_{\loc}(\hat{z})\cap W^{u}_{\loc}(\hat{y})$. Now fix $\ell$ points $\hat{z}_{1},\dots,\hat{z}_{\ell}$ such that $\hat{z}_{i} \in [0;i]$. We then define for $\hat{x} \in [0;i]$, 
\[
\psi_{k}^{u}(\hat{x}) = \sum_{j = 0}^{\infty} \varphi_{k}(\hat{f}^{j}(\hat{x})) - \varphi_{k}(\hat{f}^{j}(h^s_{\hat{x},\hat{z}_{i}}))
\]
and define $\varphi^{u}_{k}(\hat{x}) =\varphi_{k}( h^s_{\hat{x},\hat{z}_{i}}(\hat{x}))$. These functions then satisfy the equation  
\[
\varphi^{u}_{k} = \varphi_{k} + \psi_{k}^{u} \circ \hat{f} - \psi_{k}^{u}
\]
which implies that $\varphi^{u}_{k}$ is cohomologous to $\varphi_{k}$. Furthermore $\varphi^{u}_{k}$ is constant on local stable sets and thus descends to a continuous function on $\Sigma^{u}$ for each $k$. Since cohomologous potentials define the same equilibrium state we get that $\hat{\mu}_{\varphi^{u}_{k}} = \hat{\mu}_{\varphi_{k}}$ for all $k$.  Each function $\psi_{k}^{u}$ is $\beta$-H\"older continuous and thus so is $\varphi_{k}^{u}$, and as $k \rightarrow \infty$ convergence of $\varphi_{k}$ to $\varphi$ in $C^{\beta}(\hat{\Sigma},\mathbb{R})$ implies convergence of $\psi_{k}^{u}$ to the corresponding function $\psi_{k}$ for $\varphi$ in $C^{\beta}(\hat{\Sigma},\mathbb{R})$, and thus $\varphi^{u}_{k}$ converges to $\varphi^{u}$ in $C^{\beta}(\hat{\Sigma},\mathbb{R})$.  

Thus we may assume that $\varphi_{k}$ and $\varphi$ are constant on local stable sets of $\hat{f}$, and hence they descend to H\"older continuous functions on $\Sigma^{u}$. Recall that the transfer operator $T_{\varphi}: C^{0}(\Sigma^{u},\mathbb{C}) \to C^{0}(\Sigma^{u},\mathbb{C})  $ associated to $\varphi$ on $\Sigma^{u}$ is defined on continuous functions $g: \Sigma^{u} \rightarrow \mathbb{C}$ by 
\[
T_{\varphi}g(x) = \sum_{y \in f_{u}^{-1}(x)} e^{\varphi(y)}g(y).
\]
By the Ruelle-Perron-Frobenius theorem, $T_{\varphi}$ acts with a spectral gap on the Banach space $C^{\beta}(\Sigma^{u},\mathbb{C})$\cite{Bow2}. Let $\nu_{\varphi}^{u}$ be the dominant eigenvector for the adjoint action of $T_{\varphi}$ on probability measures and $\zeta_{\varphi}^{u} \in C^{\beta}(\Sigma^{u},\mathbb{C})$ the strictly positive dominant eigenvector for $T_{\varphi}$ which satisfies $\int_{\Sigma^{u}}\zeta_{\varphi}^{u} \,d\nu_{\varphi} = 1$ and has eigenvalue $e^{P}$, where $P$ is the topological pressure of $\varphi$. Then $\mu_{\varphi}^{u}$ is given by $\mu_{\varphi}^{u} = \zeta_{\varphi}^{u} \nu_{\varphi}^{u}$ and the Jacobian of $f_{u}$ with respect to  $\mu_{\varphi}^{u}$ is $e^{P-\varphi}\frac{\zeta_{\varphi}}{\zeta_{\varphi} \circ f_{u}}$.  

Since $T_{\varphi}$ depends continuously on $\varphi \in C^{\beta}(\hat{\Sigma},\mathbb{R})$, we conclude from the spectral gap property that the dominant eigenvector $\zeta_{\varphi}^{u}$ and its eigenvalue $e^{P}$ depend continuously on $\varphi$ in the $\beta$-H\"older norm and similarly the dominant eigenvector $\nu_{\varphi}^{u}$ for the adjoint depends  continuously on $\varphi$ in the weak-$*$ topology on probability measures on $\hat{\Sigma}$. Consequently convergence of $\varphi_{k}$ to $\varphi$ implies weak-* convergence of $\mu_{\varphi_{k}}^{u}$ to $\mu_{\varphi}^{u}$ and convergence in the $\beta$-H\"older norm of $J_{\mu^{u}_{\varphi_{k}}}f_{u}$ to $J_{\mu^{u}_{\varphi}}f_{u}$. By Lemma \ref{lemma: measure convergence} and the subsequent remark, we conclude that $\hat{\mu}_{\varphi_{k}}$ converges to $\hat{\mu}$ as in Section \ref{sec: prod}. 
\end{proof}

\begin{corollary}\label{cor: fiber bunched continuity} 
For each $\alpha, \beta > 0$, the Lyapunov exponents
\[
\lambda_{\pm}: C^{\alpha}(\hat{\Sigma},GL(2,\mathbb{R})) \times C^{\beta}(\hat{\Sigma},\mathbb{R}) \rightarrow \mathbb{R}
\] 
\[
(\hat{A},\varphi) \rightarrow \lambda_{\pm}(\hat{A},\hat{\mu}_{\varphi})
\]
are continuous when restricted to  $\hat{A} \in C^{\alpha}(\hat{\Sigma},GL(2,\mathbb{R}))$ which are $\alpha$-fiber-bunched. 
\end{corollary}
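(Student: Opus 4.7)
The plan is to deduce Corollary \ref{cor: fiber bunched continuity} directly from Theorem \ref{mainthm}. Thus, given $(\hat A_n,\varphi_n) \to (\hat A,\varphi)$ in $C^\alpha \times C^\beta$ with all $\hat A_n$ and $\hat A$ being $\alpha$-fiber-bunched, I need to verify two things: (i) the sequence $\{\hat A_n\}$ converges uniformly with holonomies to $\hat A$ (in the sense of Section \ref{sec: holonomies}), and (ii) the equilibrium states $\hat \mu_{\varphi_n}$ converge to $\hat \mu_\varphi$ as in Section \ref{sec: prod}. Once both are established, Theorem \ref{mainthm} applies verbatim.

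For (i), I would use the canonical holonomies given by the limit in the remark after Definition \ref{defn: invariant holonomies}, namely
\[
H^{s,\hat A}_{\hat x \hat y} = \lim_{n \to \infty} \hat A^n(\hat y)^{-1}\hat A^n(\hat x),\qquad \hat y \in W^s_{\loc}(\hat x).
\]
First, I would give the standard telescoping estimate showing that, under $\alpha$-fiber-bunching with constant $N$, successive terms in the product satisfy
\[
\bigl\|\hat A^{n+1}(\hat y)^{-1}\hat A^{n+1}(\hat x) - \hat A^n(\hat y)^{-1}\hat A^n(\hat x)\bigr\| \le C \bigl(\|\hat A^N\|\cdot\|(\hat A^N)^{-1}\|\cdot \theta^{N\alpha}\bigr)^{\lfloor n/N\rfloor}
\]
for a constant $C$ depending on $\|\hat A\|_\alpha$ and the fiber-bunching constant. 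The key observation is that this bound is \emph{locally uniform} in $\hat A$ in the $C^\alpha$-topology: since $\alpha$-fiber-bunching is an open condition and the $C^\alpha$-norm controls everything that enters the estimate, there is a neighborhood $\mathcal V$ of $\hat A$ on which the geometric ratio is uniformly less than $1$ and $C$ is uniformly bounded. Consequently, for $\hat A_n$ eventually in $\mathcal V$, the defining limits converge uniformly in both $(\hat x,\hat y)$ and $n$, so $(\hat A_n, H^{s,\hat A_n}, H^{u,\hat A_n}) \to (\hat A, H^{s,\hat A}, H^{u,\hat A})$ in $\mathcal H$.

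For (ii), I would invoke the standard thermodynamic formalism for subshifts of finite type (Bowen \cite{Bow2}, Leplaideur \cite{Lep}). The equilibrium state $\hat \mu_\varphi$ can be constructed via one-sided Gibbs measures $\mu^u_\varphi$ and $\mu^s_\varphi$ on $\Sigma^u$ and $\Sigma^s$, and the explicit formula for the Gibbs density (built from the Ruelle operator's leading eigenfunction and eigenmeasure on each side) realizes $\hat \mu_\varphi$ as locally a continuous density $\psi_\varphi$ times $\mu^s_\varphi \times \mu^u_\varphi$. Continuity of the spectral data of the Ruelle operator in H\"older-continuous potentials is classical, and yields that both $\psi_{\varphi_n} \to \psi_\varphi$ uniformly and $\hat \mu_{\varphi_n} \to \hat \mu_\varphi$ weakly; these are exactly the requirements of the convergence notion in Section \ref{sec: prod}.

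Once (i) and (ii) are in place, Theorem \ref{mainthm} gives $\lambda_\pm(\hat A_n, \hat \mu_{\varphi_n}) \to \lambda_\pm(\hat A, \hat \mu_\varphi)$, which is exactly the continuity claim. The main obstacle is the quantitative part of (i): one must extract uniform fiber-bunching constants over a $C^\alpha$-neighborhood so that the geometric convergence rate of the holonomy limit does not degenerate along the sequence. Step (ii) is essentially a citation of the standard Gibbs-measure machinery, and the openness of the fiber-bunching condition guarantees that all hypotheses persist for $n$ large enough.
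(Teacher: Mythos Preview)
Your proposal is correct and follows essentially the same route as the paper: for (i) the paper simply cites \cite{BGV} and \cite{VianaAlmostAllCocyc} for the locally uniform convergence of canonical holonomies that you outline, and for (ii) the paper carries out exactly the Ruelle--Perron--Frobenius spectral-gap argument you sketch (after the standard reduction to potentials constant on local stable sets), obtaining the density $\xi_\varphi$ via a cohomological equation between the one-sided Jacobians and its continuous dependence on $\varphi$. The only place where the paper is more explicit than your sketch is in spelling out that the product-structure density is recovered from the separate one-sided data by solving $\xi_\varphi \circ \hat f / \xi_\varphi = (J_{\mu^u_\varphi}f_u \circ P^u)/(J_{\mu^s_\varphi}f_s \circ P^s)$, but this is precisely the ``explicit formula for the Gibbs density'' you allude to.
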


\begin{proof}
For fiber-bunched cocycles stable and unstable holonomies exist and moreover they vary continuously with respect to the cocycle in the $\alpha$-H\"older topology (see \cite{BGV} and \cite{VianaAlmostAllCocyc}). Lemma \ref{lemma: equilibrium states} implies that if $\varphi_{k}$ converges to $\varphi$ in $C^{\beta}(\hat{\Sigma},\mathbb{R})$ then the corresponding equilibrium states $\hat{\mu}_{\varphi_{k}}$ converge to $\hat{\mu}_{\varphi}$ as in Section \ref{sec: prod}. These two statements together then imply the corollary by Theorem \ref{mainthm}. 
\end{proof}

Continuous dependence of holonomies in the space of $\alpha$-fiber-bunched cocycles may actually be shown under slightly weaker hypotheses than convergence in the H\"older topology. It suffices to assume that the linear cocycle $A$ is $\alpha$-fiber-bunched, $A_{k}$ converges to $A$ in the $C^{0}$ topology, and each $A_{k}$ and $A$ are $\alpha$-H\"older continuous with uniformly bounded H\"older constant.  We refer the interested reader again to \cite{BGV} and \cite{VianaAlmostAllCocyc} for further details. 

For our second application we give an example of how to use Markov partitions to prove continuity of the Lyapunov exponents for cocycles over other hyperbolic systems besides subshifts of finite type. Let $M$ be a closed Riemannian manifold. Let $f:M \rightarrow M$ be an Anosov diffeomorphism, meaning that there is a $Df$-invariant splitting $TM = E^{s} \oplus E^{u}$ and constants $C > 0$, $0 < \nu < 1$ such that
\[
\|Df^{n}|E^{s}\| \leq C\nu^{n}, \; \; \|Df^{-n}|E^{u}\| \leq C\nu^{n}, \; n \geq 1.
\]
For $0 < \alpha \leq 1$ we say that $f$ is a $C^{1+\alpha}$ diffeomorphism of $M$ if $Df$ is $\alpha$-H\"older continuous. We write $\mathrm{Diff}^{1+\alpha}(M)$ for the space of $C^{1+\alpha}$ diffeomorphisms of $M$, equipped with the topology of uniform convergence for $f$ together with $\alpha$-H\"older convergence for the derivative $Df$. For a $C^{1+\alpha}$ Anosov diffeomorphism $f$, the stable and unstable bundles $E^{s}$ and $E^{u}$ are each $\beta$-H\"older continuous for some $\beta > 0$. In analogy to Definition \ref{defn: fiber bunched} we say that the derivative cocycle $Df|E^{u}$ is \emph{fiber-bunched} if there is an $N > 0$ such that 
\[
\|Df^{N}_{x}|E^{u}_{x}\| \cdot \|(Df^{N}_{x}|E^{u}_{x})^{-1}\| \cdot \text{max}\{\|Df^{N}_{x}|E^{s}_{x}\|^{\beta}, \|(Df^{N}_{x}|E^{u}_{x})^{-1}\|^{\beta}\} < 1.
\]
As in the case of a subshift of finite type, for each H\"older continuous potential $\varphi: M \rightarrow \mathbb{R}$ we have an equilibrium state $\mu_{\varphi}$ which is a fully supported ergodic invariant probability measure for $f$. The two most important equilibrium states for $f$ are the measure of maximal entropy (given by the potential $\varphi \equiv 0$) and the SRB measure characterized by having absolutely continuous conditional measures on the unstable leaves of $f$ (given by $\varphi(x) = -\log(|\det(Df_{x}|E^{u}_{x})|)$) which coincides with volume if $f$ is volume-preserving. To emphasize the dependence of $E^{u}$ on $f$ we will write $E^{u,f}$ for the unstable bundle associated to $f$.

\begin{corollary}\label{cor: deriv continuity}
Let $f: M \rightarrow M$ be a transitive $C^{1+\alpha}$ Anosov diffeomorphism for some $\alpha > 0$ and $\varphi: M \rightarrow \mathbb{R}$ a H\"older continuous potential. If $\dim E^{u} = 2$ and $Df|E^{u}$ is fiber-bunched then $f$ is a continuity point for the Lyapunov exponents $\lambda_{\pm}(Df|E^{u,f},\mu_{\varphi})$ as a function of $f \in \mathrm{Diff}^{1+\alpha}(M)$ and $\varphi \in C^{\beta}(M,\mathbb{R})$. 
\end{corollary}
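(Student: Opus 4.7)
The plan is to reduce Corollary \ref{cor: deriv continuity} to Theorem \ref{mainthm} via a Markov partition combined with structural stability. I would fix $f$ satisfying the hypotheses and consider sequences $f_n \to f$ in $\mathrm{Diff}^{1+\alpha}(M)$ and $\varphi_n \to \varphi$ in $C^{\beta}(M,\mathbb{R})$. First I would choose a Markov partition $\mathcal{R}$ for $f$ of small enough diameter. Structural stability of Anosov diffeomorphisms supplies, for $n$ large, H\"older conjugacies $h_n \colon M \to M$ with $h_n \circ f = f_n \circ h_n$ and $h_n \to \mathrm{id}$ uniformly, so that $h_n(\mathcal{R})$ is a Markov partition for $f_n$. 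Consequently all $f_n$ (for large $n$) are coded by a common subshift of finite type $\hat{f} \colon \hat{\Sigma} \to \hat{\Sigma}$ via semiconjugacies $\pi_n = h_n \circ \pi$, where $\pi$ is the coding map for $f$.

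Next I would pull back the derivative cocycle. The bundle $E^{u,f_n}$ depends continuously on $f_n$ in a H\"older topology, and combining this with the H\"older continuity of $\pi_n$, after choosing a continuous frame for $\pi^{*}E^{u,f}$ and transporting it via $h_n$, the pulled-back cocycles $\hat{A}_n \colon \hat{\Sigma} \to GL(2,\mathbb{R})$ are H\"older continuous for some uniform exponent $\alpha' > 0$ and converge to $\hat{A}$ in the $\alpha'$-H\"older norm. Since $Df|E^{u,f}$ is fiber-bunched on $M$, the pulled-back cocycle $\hat{A}$ is $\alpha'$-fiber-bunched in the sense of Definition \ref{defn: fiber bunched}, and openness of the fiber-bunching condition gives the same for $\hat{A}_n$ when $n$ is large. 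The canonical holonomies for fiber-bunched cocycles (see the remark after Definition \ref{defn: invariant holonomies}) then furnish stable and unstable holonomies for each $\hat{A}_n$ and for $\hat{A}$, and these vary continuously in the H\"older topology, so $\hat{A}_n \to \hat{A}$ uniformly with holonomies in the sense required by Theorem \ref{mainthm}.

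For the measures, each equilibrium state $\mu_{\varphi_n}$ for $f_n$ pulls back under $\pi_n$ to an $\hat{f}$-invariant measure $\hat{\mu}_n$ on $\hat{\Sigma}$ that is the unique equilibrium state for a H\"older potential $\tilde{\varphi}_n$ on $\hat{\Sigma}$ derived from $\varphi_n$ (in the SRB case $\tilde{\varphi}_n$ also absorbs $-\log\lvert\det Df_n|E^{u,f_n}\rvert$). H\"older convergence of $\varphi_n$, $\pi_n$, and $E^{u,f_n}$ together imply $\tilde{\varphi}_n \to \tilde{\varphi}$ in $C^{\beta'}(\hat{\Sigma},\mathbb{R})$ for some $\beta' > 0$, and the argument used in the proof of Corollary \ref{cor: fiber bunched continuity} then shows that $\hat{\mu}_n \to \hat{\mu}$ as in Section \ref{sec: prod}. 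Since $\pi_n$ is a measure-theoretic isomorphism onto a full measure set and Lyapunov exponents are preserved by this identification, applying Theorem \ref{mainthm} to $(\hat{A}_n,\hat{\mu}_n) \to (\hat{A},\hat{\mu})$ yields the claimed continuity of $\lambda_{\pm}(Df_n|E^{u,f_n},\mu_{\varphi_n})$.

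The main obstacle I anticipate is establishing the H\"older convergence of the unstable bundles $E^{u,f_n}$ to $E^{u,f}$ under $C^{1+\alpha}$-perturbations, which is what powers the H\"older convergence of $\hat{A}_n$ and, in turn, the convergence of the canonical holonomies. This is precisely where the fiber-bunching hypothesis on $Df|E^{u,f}$ is essential, since the resulting dominated splitting estimates yield uniform H\"older regularity of $E^{u,f_n}$ for $n$ large along with continuous dependence of $E^{u,f_n}$ on $f_n$ in a H\"older norm. Once this regularity statement is in place, the remaining steps (pull-back of cocycle, holonomies, and measures through a fixed coding) reduce to careful bookkeeping.
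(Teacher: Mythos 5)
Your outline is in the right family — code $f$ by a Markov partition, move everything to a subshift, and then invoke Theorem~\ref{mainthm} — and your treatment of the measures (pulling back equilibrium states and reusing the transfer-operator argument from the proof of Corollary~\ref{cor: fiber bunched continuity}) matches the paper. But there is a real gap in how you produce the holonomies for the pulled-back cocycles.

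You claim that the cocycles $\hat{A}_n$ obtained by pulling back $Df_n|E^{u,f_n}$ through the coding maps $\pi_n$ converge in an $\alpha'$-H\"older norm on $\hat{\Sigma}$ and that $\hat{A}$ is $\alpha'$-fiber-bunched in the sense of Definition~\ref{defn: fiber bunched}, so that the canonical holonomy formula from the remark after Definition~\ref{defn: invariant holonomies} applies. Neither of these is automatic, and the paper never asserts them. The coding map $h$ and the conjugacies are only H\"older, not Lipschitz, so pulling back a H\"older cocycle degrades the H\"older exponent on $\hat{\Sigma}$ (the exponent becomes roughly $\alpha\gamma$, where $\gamma$ is the H\"older exponent of the coding, and $\gamma<1$). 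Definition~\ref{defn: fiber bunched} is sensitive to precisely this exponent relative to the symbolic metric $d_\theta$, so fiber-bunching of $Df|E^{u,f}$ on $M$ does not transfer for free to fiber-bunching of $\hat{A}$ on $\hat{\Sigma}$. Consequently you cannot simply cite the canonical holonomy formula for $\hat{A}_n$ on $\hat{\Sigma}$.

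The paper sidesteps this issue entirely. It fixes one coding for $f$, fixes the bundle $E^{u,f}$, and uses the conjugacies $g_k$ together with explicit fiber-linear bundle maps $G_k: E^{u,f_k}\to E^{u,f}$ and a trivialization $L:\hat{\Sigma}\times\mathbb{R}^2\to E^{u,f}$ to \emph{transport} the stable and unstable holonomies of $Df_k|E^{u,f_k}$ (which exist on $M$ by openness of the fiber-bunching condition in $\mathrm{Diff}^{1+\alpha}(M)$) to holonomies $\hat{H}^{*,k}$ for $\hat{A}_k$ on $\hat{\Sigma}$. Since Theorem~\ref{mainthm} only requires convergence uniformly with holonomies in the sense of Section~\ref{sec: holonomies} — a $C^0$ statement, not a H\"older or fiber-bunching statement on $\hat{\Sigma}$ — this transport is exactly what is needed and avoids any claim about fiber-bunching in the symbolic model. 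If you want to salvage your route, you would need to either justify fiber-bunching of $\hat{A}$ for a suitable choice of $\theta$, or (better) replace the appeal to canonical holonomies on $\hat{\Sigma}$ by the transport argument.
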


\begin{proof}
Let $f_{k}$ be a sequence of $C^{1+\alpha}$-diffeomorphisms converging in $\mathrm{Diff}^{1+\alpha}(M)$ to $f$. For large enough $k$, $f_{k}$ is also an Anosov diffeomorphism, and moreover by structural stability there is a unique H\"older continuous homeomorphism $g_{k}: M \rightarrow M$ close to the identity such that $g_{k} \circ f_{k} = f \circ g_{k}$. Let $G_{k}:E^{u,f_{k}} \rightarrow E^{u,f}$ be a homeomorphism covering the homeomorphism $g_{k}: M \rightarrow M$ which is linear on the fibers $G_{k}(x): E^{u,f_{k}}_{x} \rightarrow E^{u,f}_{g_{k}(x)}$ and such that $G_{k}$ converges uniformly to the identity map on $E^{u,f}$ as $k \rightarrow \infty$. For $k$ sufficiently large we may take $G_{k}(x)$ to be the orthogonal projection of the plane $E^{u,f_{k}}_{x}$ onto $E^{u,f}_{g_{k}(x)}$. 

Since $Df|E^{u,f}$ is fiber-bunched and fiber bunching of $Df|E^{u}$ is an open condition in $\mathrm{Diff}^{1+\alpha}(M)$ we conclude that the cocycles $Df_{k}|E^{u,f_{k}}$ all admit stable and unstable holonomies $H^{s,k}$ and $H^{u,k}$ along the stable and unstable manifolds of $f_{k}$ and moreover that these stable and unstable holonomies converge locally uniformly to the stable and unstable holonomies $H^{s}$ and $H^{u}$ of $Df|E^{u}$ as the local stable and unstable manifolds of $f_{k}$ converge uniformly to those of $f$ (see \cite{BGV} and \cite{VianaAlmostAllCocyc}). We then define for each $k$ a new cocycle $A_{k}$ on the vector bundle $E^{u,f}$ by 
\[
A_{k}(x) = G_{k}(f_{k}(g_{k}^{-1}(x))) \circ Df_{k}(g_{k}^{-1}(x))|E^{u,f_{k}} \circ G_{k}^{-1}(g_{k}^{-1}(x))
\]
which admits stable and unstable holonomies 
\[
\widetilde{H}^{*,k}_{xy} = G_{k}(g_{k}^{-1}(y)) \circ H^{*,k}_{g_{k}^{-1}(x)g_{k}^{-1}(y)} \circ G_{k}^{-1}(g_{k}^{-1}(x))
\]
for $y \in W^{*}_{f}(x)$, $* =s,u$ and $W^{*}_{f}$ being the stable and unstable manifolds of $f$. Since $G_{k}$ converges uniformly to the identity on $E^{u,f}$ we conclude that $A_{k}$ converges to $Df|E^{u,f}$ uniformly and further that the stable and unstable holonomies of $A_{k}$ converge uniformly to those of $Df|E^{u,f}$.

The diffeomorphism $f$ admits a Markov partition and thus there is a subshift of finite type $\hat{f}: \hat{\Sigma} \rightarrow \hat{\Sigma}$ and a topological semiconjugacy $h: \hat{\Sigma} \rightarrow M$ such that $h \circ \hat{f} = f \circ h$ \cite{Bow2}. By refining the Markov partition if necessary, we can assume that the image of each cylinder $[0;j]$ of $\hat{\Sigma}$ under $h$ in $M$ is contained inside of an open set on which the bundle $E^{u,f}$ is trivializable. For $\hat{x} \in [0;j]$ let $L_{j}(\hat{x}):  \mathbb{R}^{2}  \rightarrow E^{u,f}_{h(\hat{x})}$ be the linear map associated to a fixed trivialization of $E^{u,f}$ over $h([0;j])$. We can then extend $h$ to a continuous surjection $L: \hat{\Sigma} \times \mathbb{R}^{2} \rightarrow E^{u,f}$,
\[
L(\hat{x},v) = [L_{j}(\hat{x})](v)
\]
which is a linear isomorphism on each of the fibers. We then define new linear cocycles $\hat{A}_{k}: \hat{\Sigma} \rightarrow GL(2,\mathbb{R})$ by 
\[
\hat{A}_{k}(\hat{x}) = L^{-1}(\hat{f}(\hat{x})) \circ A_{k}(h(\hat{x})) \circ L(\hat{x})
\]
which admit stable and unstable holonomies 
\[
\hat{H}^{*,k}_{\hat{x}\hat{y}} = L^{-1}(\hat{y}) \circ \widetilde{H}^{*,k}_{h(\hat{x})h(\hat{y})} \circ L(\hat{x})
\]
for $y \in W^{*}_{loc}(x)$, $* =s,u$. It is again clear that $\hat{A}_{k}$ converges to $\hat{A}$ uniformly and that the new stable and unstable holonomies $\hat{H}^{*,k}$ for $\hat{A}_{k}$ converge uniformly to those for $\hat{A}$.

Let $\nu_{k} = (g_{k})_{*}\mu_{\varphi_{k}}$. $\nu_{k}$ is the equilibrium state for $f$ associated to the potential $\varphi_{k} \circ g_{k}$ and thus is a fully supported ergodic $f$-invariant measure with local product structure on $M$. Let 
\[
\Omega = \{x \in M: \; \# h^{-1}(x) > 1\}.
\]
$\Omega$ is a null set for any equilibrium state associated to a H\"older continuous potential \cite{Bow2}. Hence we can lift $\nu_{k}$ to an $\hat{f}$-invariant measure $\hat{\nu}_{k}$ on $\hat{\Sigma}$ such that $h_{*}\hat{\nu}_{k} = \nu_{k}$. Furthermore $\hat{\nu}_{k}$ is the equilibrium state associated to the potential $\psi_{k} = \varphi_{k} \circ g_{k} \circ h$ on $\hat{\Sigma}$. As $k \rightarrow \infty$, $\psi_{k}$ converges in a H\"older norm to $\psi = \varphi \circ h$. It follows from Lemma \ref{lemma: equilibrium states} that $\hat{\nu}_{k}$ converges to $\hat{\nu}$ as in Section \ref{sec: prod}. 

Hence by the criterion of the Theorem \ref{mainthm} we get that $\lambda_{+}(\hat{A}_{k},\hat{\nu}_{k}) \to \lambda_{+}(\hat{A},\hat{\nu})$ and the same statement for $\lambda_{-}$. By construction the map $h: (\hat{\Sigma},\hat{\nu}_{k}) \rightarrow (M,\nu_{k})$ is a measurable isomorphism, and the same holds with $\hat{\nu}_{k}$ and $\nu_{k}$ replaced by $\hat{\nu}$ and $\nu$. Then by construction the map $L: \hat{\Sigma} \times \mathbb{R}^{2} \rightarrow E^{u,f}$ gives a measurable conjugacy between the cocycles $\hat{A}_{k}$ and $A_{k}$. It follows that $\lambda_{+}(\hat{A}_{k},\hat{\nu}_{k}) = \lambda_{+}(A_{k},\nu_{k})$ and $\lambda_{+}(\hat{A},\hat{\nu}) = \lambda_{+}(A,\nu) = \lambda_{+}(Df|E^{u,f},\mu_{\varphi})$. The map $g_{k}: (M,\varphi_{k}) \rightarrow (M,\nu_{k})$ is also a measurable isomorphism by construction and $G_{k}$ gives a measurable conjugacy from $Df_{k}|E^{u,f_{k}}$ to $A_{k}$ over this isomorphism. Hence we conclude that $\lambda_{+}(A_{k},\nu_{k}) = \lambda_{+}(Df_{k}|E^{u,f_{k}},\mu_{\varphi_{k}})$ for each $k$, which completes the proof. 
\end{proof}

By replacing $f$ with $f^{-1}$ we obtain the same corollary for $Df|E^{s}$ instead, provided that $\dim E^{s} = 2$. 

\begin{remark}
The conclusions of Corollary \ref{cor: deriv continuity} can be extended to 2-dimensional cocycles over maps $f: X \rightarrow X$ which are hyperbolic homeomorphisms (see \cite{AvilaVianaExtLyapInvPrin}) with $X$ a compact metric space. This includes the derivative cocycle of a diffeomorphism $f: M \rightarrow M$ over a hyperbolic set $\Lambda$ for $f$. Corollary \ref{cor: deriv continuity} can also be extended to the case of Anosov flows with 2-dimensional unstable bundle by using the fact that an Anosov flow is topologically semiconjugate via a Markov partition to a suspension flow over a subshift of finite type and then inducing on a transverse section to reduce to the case of a subshift of finite type.  
\end{remark}

\section{Preliminary Results}\label{sec: preliminary}

The rest of the paper is devoted to the proof of Theorem \ref{mainthm}. From now on $\hat{\mu}$ will denote an ergodic $\hat{f}$-invariant measure with local product structure and full support on $\hat{\Sigma}$. % Since an ergodic measure for $\hat{f}$ must be supported inside a topologically transitive component of $\hat{\Sigma}$ we will always assume that $\hat{\Sigma}$ is a topologically transitive subshift of finite type.  %%%NOTE we already assumed transitivity
In this section we prove some preliminary results. 

\subsection{Projective cocycles}\label{sec:$su$-states} Let $\mathbb{P}^{1}$ be the 1-dimensional real projective space of lines in $\mathbb{R}^{2}$. Given a one-dimensional subspace $U\subset \R^2$ we will not distinguish $U\subset \R^2$ and $U\in \P^1$.  Given a non-zero vector $v\in \R^2$  we abuse notation and consider $v\in \Pone$ by identifying $v$ with its linear span.  
Given $T\in \Gl(2, \R)$ we write $\P T\colon \Pone\to \Pone$ for the induced projective map.  

Consider a cocycle $\hat A\colon \hat \Sigma \to \Gl(2, \R)$.  
The \textit{projective cocycle} associated to $\hat{A}$ and $\hat{f}$ is the map $\hat{F}_{\hat{A}}:\hat{\Sigma}\times \mathbb{P}^{1}\to  \hat{\Sigma}\times \mathbb{P}^{1}$ given by 
\begin{displaymath}
\hat{F}_{\hat{A}} (\hat{x},v)=(\hat{f}(\hat{x}),\P \hat{A}(\hat{x})v).
\end{displaymath}

\subsection{$s$- and $u$-states}
Let $\hat{m}$ be a probability measure on $\hat{\Sigma}\times  \mathbb{P}^{1}$  {projecting}  to $\hat{\mu}$;  that is, $\hat{\pi} _{\ast}\hat{m}=\hat{\mu}$ where $\hat{\pi} :\hat{\Sigma}\times  \mathbb{P}^{1}\rightarrow \hat{\Sigma}$ is the canonical projection. A \textit{disintegration} of $\hat{m}$ along the fibers is a measurable family $\lbrace \hat{m}_{\hat{x}}: \hat{x}\in \hat{\Sigma} \rbrace$ of probabilities on $\mathbb{P}^{1}$ satisfying 
\begin{displaymath}
\hat{m}(D)=\int _{\hat{\Sigma}} \hat{m}_{\hat{x}}(\lbrace v: (\hat{x}, v)\in D\rbrace) \ d\hat{\mu}(\hat{x})
\end{displaymath}
for any measurable set $D\subset \hat{\Sigma}\times \mathbb{P}^{1}$. Observe that $\hat{m}$ is $\hat{F}_{\hat{A}}$-invariant if and only if $\P \hat{A}(\hat{x})_{\ast}\hat{m}_{\hat{x}}=\hat{m}_{\hat{f}(\hat{x})}$ for $\hat{\mu}$-almost every $\hat{x}\in \hat{\Sigma}$. 

Following   \cite{AvilaVianaExtLyapInvPrin}  we say that a disintegration $\lbrace \hat{m}_{\hat{x}}: \hat{x}\in \hat{\Sigma} \rbrace$ of an $\hat{F}_{\hat{A}}$-invariant probability measure $\hat{m}$ projecting  to $\hat{\mu}$ is \textit{essentially s-invariant} with respect to a stable holonomy $H^{s,\hat{A}}$ for $\hat{A}$ if there is a full measure subset $E$ of $\hat{\Sigma}$ such that $\hat{x}, \hat{y} \in E$ and $\hat{y} \in W^{s}_{loc}(\hat{x})$ implies that
\begin{displaymath}
(H^{s,\hat{A}}_{\hat{x} \hat{y}}) _{\ast} \hat{m}_{\hat{x}}=\hat{m}_{\hat{y}} 
\end{displaymath}
We define the notion of an \textit{essentially u-invariant} disintegration similarly.  An $\hat{F}_{\hat{A}}$-invariant probability measure $\hat{m}$ projecting  to $\hat{\mu}$ is called an \textit{$s$-state} with respect to a stable holonomy $H^{s,\hat{A}}$ if it admits some disintegration which is essentially $s$-invariant.  We will always assume that the subset $E$ is \emph{s-saturated}, meaning that if $\hat{x} \in E$ then $W^{s}_{loc}(\hat{x}) \subset E$. This can always be done by modifying the disintegration of $\hat{m}$ on a $\hat{\mu}$-null set. We define \textit{$u$-states} similarly.

An $\hat{F}_{\hat{A}}$-invariant probability measure $\hat{m}$ is an  \textit{$su$-state} if it  is simultaneously an $s$-state and a $u$-state. The main property of $su$-states is the following.  

\begin{proposition}\label{Prop: continuous $su$-states}
Assume that $\hat{\mu}$ is fully supported and has local product structure. If $\hat{m}$ is an $su$-state then it admits a disintegration for which the conditional probabilities $\hat{m}_{\hat{x}}$ depend continuously on $\hat{x}$ and are both $s$-invariant and $u$-invariant. 
\end{proposition}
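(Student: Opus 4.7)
The plan is to replace the a.e.-defined disintegration of $\hat{m}$ with a genuinely continuous one, constructed by pushing a single conditional measure from a fixed reference point along the $s$- and $u$-holonomies, and then to promote essential invariance to pointwise invariance using continuity together with full support of $\hat{\mu}$.

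Since $\hat{m}$ is both an $s$-state and a $u$-state and any two disintegrations of $\hat{m}$ agree $\hat{\mu}$-a.e., I first fix a single disintegration $\{\hat{m}_{\hat{x}}\}$ together with an $s$-saturated full measure set $E^s$ and a $u$-saturated full measure set $E^u$ on which $\{\hat{m}_{\hat{x}}\}$ is essentially $s$- and essentially $u$-invariant, respectively. Each cylinder $[0;i]$ is clopen in $\hat{\Sigma}$, so I may work one cylinder at a time and glue. Inside $[0;i]$ I fix any reference point $\hat{p}\in E^s$ (nonempty by full support of $\hat{\mu}$) and for $\hat{x}\in[0;i]$ let $[\hat{p},\hat{x}]$ denote the canonical bracket, i.e.\ the unique point of $W^s_{\loc}(\hat{p})\cap W^u_{\loc}(\hat{x})$; this varies continuously in $\hat{x}$. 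Define
\[
\hat{m}^c_{\hat{x}} \;:=\; (H^{u,\hat{A}}_{[\hat{p},\hat{x}]\,\hat{x}})_\ast \,(H^{s,\hat{A}}_{\hat{p}\,[\hat{p},\hat{x}]})_\ast \,\hat{m}_{\hat{p}}.
\]
Continuity of the stable and unstable holonomies (Definition~\ref{defn: invariant holonomies}) then makes $\hat{x}\mapsto\hat{m}^c_{\hat{x}}$ continuous in the weak-$*$ topology.

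The essential check is that $\hat{m}^c_{\hat{x}}=\hat{m}_{\hat{x}}$ for $\hat{\mu}$-a.e.\ $\hat{x}$, so that $\{\hat{m}^c_{\hat{x}}\}$ is in fact a disintegration of $\hat{m}$. By $s$-saturation of $E^s$ one has $[\hat{p},\hat{x}]\in W^s_{\loc}(\hat{p})\subset E^s$, and essential $s$-invariance then gives $\hat{m}_{[\hat{p},\hat{x}]}=(H^{s,\hat{A}}_{\hat{p}\,[\hat{p},\hat{x}]})_\ast\hat{m}_{\hat{p}}$. For $\hat{x}\in E^u$, $u$-saturation places $[\hat{p},\hat{x}]\in W^u_{\loc}(\hat{x})\subset E^u$ and essential $u$-invariance yields $\hat{m}_{\hat{x}}=(H^{u,\hat{A}}_{[\hat{p},\hat{x}]\,\hat{x}})_\ast\hat{m}_{[\hat{p},\hat{x}]}=\hat{m}^c_{\hat{x}}$; this holds for all $\hat{x}\in E^u\cap[0;i]$, a full measure subset of the cylinder.

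Finally, to upgrade essential invariance to pointwise invariance, both sides of $(H^{s,\hat{A}}_{\hat{x}\hat{y}})_\ast\hat{m}^c_{\hat{x}}=\hat{m}^c_{\hat{y}}$ are continuous in $(\hat{x},\hat{y})\in\Omega^s$, and the identity holds on the set $D$ of pairs with both endpoints in $E^s$ and in the full measure set on which $\hat{m}^c$ already coincides with $\hat{m}$. I expect the main technical step to be verifying that $D$ is dense in $\Omega^s$: this is where the combination of full support of $\hat{\mu}$ with the local product structure of $\hat{\mu}$ (Lemma~\ref{lemma:continuity mu}) enters, via a Fubini-type argument showing that for $\hat{\mu}$-a.e.\ $\hat{x}$ the conditional measure $\hat{\mu}_{\hat{x}}$ on $W^s_{\loc}(\hat{x})$ has full support there and assigns full measure to the good set. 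Once density of $D$ is established, continuity forces pointwise $s$-invariance of $\{\hat{m}^c_{\hat{x}}\}$ everywhere on $\Omega^s$, and the symmetric argument using unstable holonomies and $E^u$ yields pointwise $u$-invariance, completing the proof.
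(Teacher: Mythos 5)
The paper does not give a proof of this proposition; it simply defers to \cite[Proposition 4.8]{AvilaVianaExtLyapInvPrin}, so there is no ``in-house'' argument to compare against. Your approach --- build the candidate continuous disintegration by transporting $\hat{m}_{\hat{p}}$ first along the stable holonomy to $[\hat{p},\hat{x}]$ and then along the unstable holonomy to $\hat{x}$, check that it agrees $\hat{\mu}$-a.e.\ with the original disintegration, and then upgrade essential invariance to pointwise invariance by a density-plus-continuity argument --- is a correct, self-contained route, and is close in spirit to the argument in Avila--Viana.

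Two points deserve a harder look. First, you assume at the outset a \emph{single} disintegration together with an $s$-saturated full-measure set $E^s$ and a $u$-saturated full-measure set $E^u$. These two saturation operations each modify the disintegration on a null set, and doing one can unsaturate the other, so it is not automatic that a single disintegration can be taken with both sets saturated. In fact your argument does not need both: with $E^s$ $s$-saturated (which the paper grants as routine), it suffices to choose $\hat{p}$ on a local stable manifold for which the conditional measure $\hat{\mu}_{P^u(\hat{p})}$ gives full measure to $E^u$ (a full-$\mu^u$-measure set of such stable manifolds exists by Fubini and local product structure). Since $[\hat{p},\hat{x}]$ depends on $\hat{x}$ only through $P^s(\hat{x})$, this choice guarantees $[\hat{p},\hat{x}]\in E^u$ for $\hat{\mu}$-a.e.\ $\hat{x}$, which is all the $u$-saturation was used for. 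So ``fix any $\hat{p}\in E^s$'' is a touch too casual, but the fix is cheap and does not change the architecture of the proof. Second, a small observation that simplifies the final step: within a cylinder $[0;i]$, $u$-invariance of $\hat{m}^c$ holds \emph{pointwise by construction}, since $[\hat{p},\hat{y}]=[\hat{p},\hat{x}]$ whenever $\hat{y}\in W^u_{\loc}(\hat{x})$ and the unstable holonomies satisfy the cocycle identity $H^{u,\hat{A}}_{\hat{x}\hat{y}}\circ H^{u,\hat{A}}_{[\hat{p},\hat{x}]\hat{x}}=H^{u,\hat{A}}_{[\hat{p},\hat{x}]\hat{y}}$. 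Only $s$-invariance genuinely requires the density-plus-continuity argument, and your sketch of that step (full support of $\hat{\mu}$ and of the conditionals $\hat{\mu}_x$, Fubini giving that $E^s\cap F$ is conull in $\hat{\mu}_x$-a.e.\ local stable set, hence dense there, hence density of $D$ in $\Omega^s$) is the right argument.
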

For a proof of this proposition see \cite[Proposition 4.8]{AvilaVianaExtLyapInvPrin}. 

 Given a cocycle with holonomies, there is always at least one $s$- and one $u$-state. On the other hand, $su$-states  impose some  rigidity on the system  as exhibited by Proposition \ref{Prop: continuous $su$-states} and as such need not always exist. 
However, here is  one situation in which $su$-states are guaranteed to exist.  
As stated here,  this follows from the main result in \cite{MR850070} and has been extended to  more general settings in \cite{AvilaVianaExtLyapInvPrin}.
\begin{theorem}[Invariance Principle] \label{theorem: invariance principle}
Let $\hat{A}:\hat{\Sigma} \rightarrow \Sl (2,\mathbb{R})$ be a  cocycle admitting stable and unstable holonomies  and assume that $\hat{\mu}$ is an ergodic $\hat{f}$-invariant probability measure with local product structure.  If $\lambda _+({\hat{A}}, \hat{\mu})=\lambda _-({\hat{A}}, \hat{\mu})= 0 $ then any $\hat{F}_{\hat{A}}$-invariant probability measure projecting to $\hat{\mu}$ is an $su$-state.
\end{theorem}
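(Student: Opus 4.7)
The plan is to prove that any $\hat{F}_{\hat{A}}$-invariant probability $\hat{m}$ projecting to $\hat{\mu}$ is an $s$-state; the $u$-state claim will follow by applying the same argument to $\hat{f}^{-1}$ with the inverse cocycle, since the Lyapunov exponents remain zero and the roles of stable and unstable sets are swapped. Fixing a measurable disintegration $\{\hat{m}_{\hat{x}}\}$ of $\hat{m}$, the goal is to show that, after a modification on a null set, $(H^{s,\hat{A}}_{\hat{x}\hat{y}})_* \hat{m}_{\hat{x}} = \hat{m}_{\hat{y}}$ whenever $\hat{y} \in W^{s}_{loc}(\hat{x})$.

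The algebraic input comes from iterating the equivariance relation of Definition \ref{defn: invariant holonomies}, which yields $\hat{A}^n(\hat{y}) \circ H^{s,\hat{A}}_{\hat{x}\hat{y}} = H^{s,\hat{A}}_{\hat{f}^n(\hat{x})\hat{f}^n(\hat{y})} \circ \hat{A}^n(\hat{x})$. Combined with the invariance relation $\hat{A}^n(\hat{z})_* \hat{m}_{\hat{z}} = \hat{m}_{\hat{f}^n(\hat{z})}$ (valid $\hat{\mu}$-almost surely), I obtain
\[
\hat{A}^n(\hat{y})_* \big( (H^{s,\hat{A}}_{\hat{x}\hat{y}})_* \hat{m}_{\hat{x}} \big) = (H^{s,\hat{A}}_{\hat{f}^n(\hat{x})\hat{f}^n(\hat{y})})_* \hat{m}_{\hat{f}^n(\hat{x})},
\]
whereas $\hat{A}^n(\hat{y})_* \hat{m}_{\hat{y}} = \hat{m}_{\hat{f}^n(\hat{y})}$. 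Since $d(\hat{f}^n(\hat{x}), \hat{f}^n(\hat{y})) \to 0$ exponentially, $H^{s,\hat{A}}_{\hat{z}\hat{z}} = Id$, and the holonomy is continuous on the compact set $\Omega^{s}$, these two pushforwards become asymptotically close. Thus $(H^{s,\hat{A}}_{\hat{x}\hat{y}})_* \hat{m}_{\hat{x}}$ and $\hat{m}_{\hat{y}}$ have asymptotically identical images under $\hat{A}^n(\hat{y})$.

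Promoting this asymptotic agreement to actual equality is the content of the Ledrappier invariance principle, and this is exactly where the zero-exponent hypothesis enters. The standard route uses a Furstenberg-type identity expressing Lyapunov exponents as integrals of the logarithmic expansion of lines against invariant measures on the projective bundle, combined with a conditional entropy argument: one studies the conditional information of a fiber partition of $\hat{\Sigma} \times \Pone$ with respect to the $\sigma$-algebra of stable-saturated sets and shows that vanishing of Lyapunov exponents forces this information to be zero, which is equivalent to $s$-invariance of the disintegration. The principal obstacle is that our holonomies are abstract continuous objects rather than canonical limits of matrix products, so executing the entropy argument in this generality requires the Avila-Viana extension \cite{AvilaVianaExtLyapInvPrin} of Ledrappier's theorem \cite{MR850070}, which uses the local product structure of $\hat{\mu}$ together with the continuous Jacobians and continuous conditional measures provided by Lemmas \ref{lemma: Jacobians} and \ref{lemma:continuity mu} to run the argument uniformly in $\hat{x}$ and to shuttle information between the stable and unstable directions.
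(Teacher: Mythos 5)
The paper itself offers no proof of this statement; it is presented as a citation to Ledrappier \cite{MR850070} as generalized by Avila--Viana \cite{AvilaVianaExtLyapInvPrin}, and your proposal ultimately does the same, so at the level at which the paper operates the approaches coincide. However, the heuristic preamble you supply before invoking those references contains a step that does not hold as stated. You claim that $(H^{s,\hat A}_{\hat f^n(\hat x)\hat f^n(\hat y)})_*\hat m_{\hat f^n(\hat x)}$ and $\hat m_{\hat f^n(\hat y)}$ ``become asymptotically close'' because $d(\hat f^n(\hat x),\hat f^n(\hat y))\to 0$, $H^{s,\hat A}_{\hat z\hat z}=\mathrm{Id}$, and the holonomy is continuous. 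The holonomy part is fine, but the disintegration $\hat z\mapsto\hat m_{\hat z}$ is a priori only \emph{measurable}; its continuity is exactly what Proposition~\ref{Prop: continuous $su$-states} produces \emph{after} one already knows $\hat m$ is an $su$-state, not before. So closeness of the base points gives no control on the distance between $\hat m_{\hat f^n(\hat x)}$ and $\hat m_{\hat f^n(\hat y)}$. Even setting that aside, asymptotic closeness of the images under $\hat A^n(\hat y)_*$ would not yield equality of the preimages, since the family $\{\hat A^n(\hat y)^{-1}_*\}_n$ is not equicontinuous on measures; controlling that distortion is precisely where the hypothesis $\lambda_+=\lambda_-=0$ enters, through the conditional-entropy (martingale) inequality at the heart of Ledrappier's argument, not through any pushforward-approximation scheme. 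Since you correctly defer to \cite{MR850070,AvilaVianaExtLyapInvPrin} for the real content, your proposal reaches the right conclusion, but the ``asymptotic agreement'' framing should be discarded: it is not a correct sketch of the invariance-principle proof and would fail if pursued literally.
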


In the sequel, we will be interested in sequences of $s$- and $u$-states projecting to different base measures and invariant under different projective cocycles and corresponding holonomies.  
The next lemma gives a criterion for an accumulation point of such a sequence to be an $s$- or $u$-state for the limiting cocycle.  

%\marginal{cleaned up the statement of the lemma.  removed irrelevant  brackets }
\begin{lemma}\label{states} Let $\hat{A}_{k}\colon \hat{\Sigma} \rightarrow GL(2,\mathbb{R})$ be a sequence of linear cocycles with holonomies and suppose that $\hat{A}_{k}$ converges to $\hat{A}$ uniformly with holonomies. For each $k$ let $ \hat{m}_{k} $ be an  $s$-state for $ \hat{A}_{k} $ with respect to the stable holonomies $ H^{s,\hat{A}_{k}}   $ of $\hat{A}_{k}$ and projecting to a   fully supported $\hat{f}$-invariant probability measure  $ \hat{\mu}_{k} $ with local product structure. Suppose that the sequence $\hat{\mu}_{k}$ converges to $\hat{\mu}$ as in Section \ref{sec: prod} and that $\hat{m}_{k} \rightarrow \hat{m}$ in the weak-$*$ topology. Then $\hat{m}$ is an $s$-state with respect to the stable holonomies $H^{s,\hat{A}}$ for $\hat{A}$ which projects to $\hat{\mu}$. The same holds with unstable holonomies and u-states replacing stable holonomies and s-states. 
\end{lemma}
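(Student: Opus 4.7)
The plan is to verify the three defining properties of an $s$-state for $\hat{m}$: the projection $\hat{\pi}_*\hat{m} = \hat{\mu}$, the $\hat{F}_{\hat{A}}$-invariance of $\hat{m}$, and the existence of an essentially $s$-invariant disintegration with respect to $H^{s,\hat{A}}$. The first is immediate from weak-$*$ continuity of $\hat{\pi}_*$, since $\hat{\pi}_*\hat{m}_k = \hat{\mu}_k \to \hat{\mu}$. For invariance, uniform convergence $\hat{A}_k \to \hat{A}$ implies uniform convergence of the projective cocycles $\hat{F}_{\hat{A}_k} \to \hat{F}_{\hat{A}}$ on the compact space $\hat{\Sigma} \times \Pone$, so for every continuous $\phi$,
\begin{displaymath}
\int \phi \circ \hat{F}_{\hat{A}}\, d\hat{m} = \lim_k \int \phi \circ \hat{F}_{\hat{A}_k}\, d\hat{m}_k = \lim_k \int \phi\, d\hat{m}_k = \int \phi\, d\hat{m}.
\end{displaymath}

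For the essential $s$-invariance I would work inside a single cylinder $[0;i]$, which is clopen so that restriction to $[0;i] \times \Pone$ commutes with weak-$*$ limits. By Lemma \ref{lemma:continuity mu} and the Section \ref{sec: prod} mode of convergence, the stable-leaf conditionals $\hat{\mu}^k_x$ of $\hat{\mu}_k$ converge uniformly in $x \in P^u([0;i])$ to the conditionals $\hat{\mu}_x$ of $\hat{\mu}$. Fix a continuous section $\hat{z}\colon P^u([0;i]) \to [0;i]$, for instance the unstable-holonomy orbit of a base point $\hat{z}_i$, so that $\hat{z}(x) \in W^s_{\loc}(x)$ for every $x$. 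For each $k$, extract $\eta^k_x := \hat{m}^k_{\hat{z}(x)} \in \mathcal{P}(\Pone)$ from a fixed $s$-saturated disintegration of $\hat{m}_k$; essential $s$-invariance gives $\hat{m}^k_{\hat{y}} = (\P H^{s,\hat{A}_k}_{\hat{z}(x),\hat{y}})_*\eta^k_x$ for $\hat{y}$ in a full measure subset of $W^s_{\loc}(x)$, so $\hat{m}_k|_{[0;i]\times\Pone}$ is entirely encoded by the family $\{\eta^k_x\}$ and $\mu^u_k$.

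To take the limit, introduce the continuous straightening map
\begin{displaymath}
\tilde q_k\colon [0;i] \times \Pone \to [0;i] \times \Pone, \qquad \tilde q_k(\hat{y}, v) := (\hat{y},\, \P H^{s,\hat{A}_k}_{\hat{y},\hat{z}(P^u(\hat{y}))}\, v),
\end{displaymath}
and its composite $q_k := (P^u \times \mathrm{id}) \circ \tilde q_k$. Using the cocycle identity for $H^{s,\hat{A}_k}$ and essential $s$-invariance, a direct computation shows
\begin{displaymath}
(q_k)_*\bigl(\hat{m}_k|_{[0;i]\times\Pone}\bigr) = \int_{P^u([0;i])} \delta_x \otimes \eta^k_x \, d\mu^u_k(x).
\end{displaymath}
Uniform convergence with holonomies gives $\tilde q_k \to \tilde q$ and $q_k \to q$ uniformly, and combined with $\hat{m}_k \to \hat{m}$ weakly this yields weak-$*$ convergence of $(q_k)_*\hat{m}_k$ to $q_*\hat{m}$; disintegrating the limit along $P^u$ defines the candidate family $\{\eta_x\}$ against $\mu^u$. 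A Fubini-type identification — using the uniform convergence of $\hat{\mu}^k_x$ and of the densities $\psi_k$ from Section \ref{sec: prod}, together with uniform convergence of the stable holonomies on $\Omega^s$ — then shows that the measure
\begin{displaymath}
\int_{P^u([0;i])} \int_{W^s_{\loc}(x)} \delta_{\hat{y}} \otimes (\P H^{s,\hat{A}}_{\hat{z}(x),\hat{y}})_*\eta_x \, d\hat{\mu}_x(\hat{y})\, d\mu^u(x)
\end{displaymath}
coincides with $\hat{m}|_{[0;i]\times\Pone}$. This is an essentially $s$-invariant disintegration by the cocycle identity for $H^{s,\hat{A}}$, and gluing across the finitely many cylinders produces the desired disintegration of $\hat{m}$.

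The main obstacle is that each individual disintegration is only defined modulo null sets, so pointwise limits of $\hat{m}^k_{\hat{y}}$ are not available. Packaging the disintegration data into the global pushforwards $(q_k)_*\hat{m}_k$ and $(\tilde q_k)_*\hat{m}_k$ sidesteps this, as these converge unconditionally in the weak-$*$ topology; the strong Section \ref{sec: prod} mode of convergence — in particular, uniform convergence of $\psi_k$, and hence of the stable conditionals $\hat{\mu}^k_x$ — is precisely what is needed to translate weak-$*$ convergence of these pushforwards back into a valid limiting disintegration. The $u$-state statement follows from the symmetric argument with the roles of stable and unstable exchanged.
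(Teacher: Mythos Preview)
Your proposal is correct and shares its opening move with the paper: both straighten each $\hat{A}_k$ along its stable holonomies by conjugating with $H^{s,\hat{A}_k}_{\hat{y},\hat{z}(P^u(\hat{y}))}$ (your $\tilde q_k$), obtaining measures $\hat\nu_k=(\tilde q_k)_*\hat m_k$ whose fiberwise disintegrations are constant on local stable sets, and then project to $\Sigma^u\times\Pone$.

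The divergence is in how one concludes that the limit $\hat\nu=\tilde q_*\hat m$ also has constant-on-stable-sets disintegration. The paper takes a dynamical route: it first proves that the projected measure $\nu$ satisfies $A(x)_*\nu_x=\nu_{f_u(x)}$ for $\mu^u$-a.e.\ $x$ via a Jacobian computation, and then invokes \cite[Lemma 3.4]{AvilaVianaExtLyapInvPrin} to reconstruct $\hat\nu_{\hat x}$ as $\lim_n A^n(P^u(\hat f^{-n}\hat x))_*\nu_{P^u(\hat f^{-n}\hat x)}=\nu_{P^u(\hat x)}$. You instead exploit the local product structure of the base measures directly: writing $\int\phi\,d\hat\nu_k=\int\Psi_k\,d\nu_k$ with $\Psi_k(x,v)=\int_{W^s_{\loc}(x)}\phi(\hat y,v)\,d\hat\mu^k_x(\hat y)$, the Section~\ref{sec: prod} convergence (uniform $\psi_k\to\psi$, weak-$*$ $\mu^s_k\to\mu^s$) gives $\Psi_k\to\Psi$ uniformly, whence $\int\phi\,d\hat\nu=\int\Psi\,d\nu$, which is exactly the statement that $\hat\nu_{\hat y}=\eta_{P^u(\hat y)}$. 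Your route is more self-contained and avoids the external citation; the paper's route has the side benefit of recording the invariance $A(x)_*\nu_x=\nu_{f_u(x)}$, though that relation is recovered independently later (Remark~\ref{remark: invariant measure}).
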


\begin{proof}
We will prove the statement for $s$-states. The statement for $u$-states then follows by considering the inverse cocycle $\hat{A}^{-1}$ over $\hat{f}^{-1}$. 

We begin by defining continuous changes of coordinates which make each $\hat{A}_{k}$ and $\hat{A}$ constant on local stable manifolds. For each $k$ let $\{\hat{m}^k_{\hat{x}}\}_{\hat{x}\in\hat{\Sigma}}$ be a disintegration of $\hat{m}_{k}$ along the $\mathbb{P}^{1}$ fibers. Each of these conditional measures is defined on a $\hat{\mu}_{k}$-full measure set $E_{k} \subset \hat{\Sigma}$ which we may assume to be $s$-saturated, since these measures are $s$-states, and we may assume these conditional measures are invariant under stable holonomy on $E_{k}$. We may also assume that the sets $E_{k}$ are $\hat{f}$-invariant. 

Fix $\ell$ points $\hat{z}_{1},\dots,\hat{z}_{\ell}$ with $\hat{z}_{i} \in [0;i]$. For $\hat{x} \in [0;i]$, let $g(\hat{x})$ be the unique point in the intersection $W^{u}_{loc}(\hat{z}_{i}) \cap W^{s}_{loc}(\hat{x})$. Note that $g(\hat{x}) = g(\hat{y})$ if $\hat{y} \in W^{s}_{loc}(\hat{x})$. Define
\[
\widetilde{A}_{k}(\hat{x}) = H^{s,\hat{A}_{k}}_{g(\hat{f}(\hat{x}))\hat{f}(\hat{x})} \circ \hat{A}_{k}(g(\hat{x})) \circ H^{s,\hat{A}_{k}}_{ \hat{x} g(\hat{x})}
\]
for each $k$, and define $\widetilde{A}$ similarly. By construction each $\widetilde{A}_{k}$ is constant along local stable manifolds and furthermore, since the stable holonomies $H^{s,\hat{A}_{k}}$ converge uniformly to $H^{s,\hat{A}}$, we also have that $\widetilde{A}_{k} \rightarrow \widetilde{A}$ uniformly.

Define $\hat{\nu}^k_{\hat{x}} = (H^{s,\hat{A}_{k}}_{ \hat{x} g(\hat{x})})_{*}\hat{m}^k_{\hat{x}}$ and let $\hat{\nu}_{k}$ be the probability measure on $\hat{\Sigma} \times \mathbb{P}^{1}$ projecting to $\hat \mu_k$ with this disintegration along the $\mathbb{P}^{1}$ fibers. $\hat{\nu}_{k}$ is $\hat{F}_{\widetilde{A}_{k}}$-invariant and since the linear maps $H^{s,\hat{A}_{k}}_{ \hat{x} g(\hat{x}) }$ depend continuously on $\hat{x}$, we conclude that $\hat{\nu}_{k}$ converges in the weak-$*$ topology to a measure $\hat{\nu}$ with disintegration $\hat{\nu}_{\hat{x}} = (H^{s,\hat{A}}_{ \hat{x} g(\hat{x}) })_{*}\hat{m}_{\hat{x}}$. To prove that $\hat{m}$ is an $s$-state it thus suffices to show that for $\hat{\mu}$-a.e.\ pair of points $\hat{x}$ and $\hat{y}$ with $\hat{y} \in W^{s}_{loc}(\hat{x})$ we have $\hat{\nu}_{\hat{x}} = \hat{\nu}_{\hat{y}}$, because if $\hat{y} \in W^{s}_{loc}(\hat{x})$ for some $\hat{x}$ in the intersection of this full measure subset with $E$, we then have 
\[
(H^{s,\hat{A}}_{ \hat{x} g(\hat{x}) })_{*}\hat{m}_{\hat{x}} = \hat{\nu}_{\hat{x}} = \hat{\nu}_{\hat{y}} = (H^{s,\hat{A}}_{ \hat{y} g(\hat{y}) })_{*}\hat{m}_{\hat{y}}
\]
and therefore
\[
\hat{m}_{\hat{y}} = \left(H^{s,\hat{A}}_{g(\hat{y})\hat{y}} \circ H^{s,\hat{A}}_{\hat{x}g(\hat{x})}\right)_{*}\hat{m}_{\hat{x}} = \left(H^{s,\hat{A}}_{g(\hat{x})\hat{y}} \circ H^{s,\hat{A}}_{\hat{x}g(\hat{x})}\right)_{*}\hat{m}_{\hat{x}} = \left(H^{s,\hat{A}}_{\hat{x}\hat{y}}\right)_{*}\hat{m}_{\hat{x}} 
\]
where we used $g(\hat{x}) = g(\hat{y})$ in the second line. Since the measures $\hat{\nu}_{k}$ are $s$-states we have $\hat{\nu}^k_{\hat{y}} = \hat{m}^k_{g(\hat{x})}$ for every $\hat{y} \in W^{s}_{loc}(\hat{x})$, so the disintegrations of the measures $\hat{\nu}_{k}$ are constant on $\hat{\mu}_{k}$-a.e.\ local stable manifold. 

There are continuous maps $A_{k}: \Sigma^{u} \rightarrow GL(2,\mathbb{R})$ such that $A_{k} \circ P^{u} = \widetilde{A}_{k}$ and such that $A_{k} \rightarrow A$ uniformly, where $A \circ P^{u} = \widetilde{A}$. Let $\nu_{k}$, $\nu$ be the images of the measures $\hat{\nu}_{k}$, $\hat{\nu}$ under the projection $P^{u} \times Id: \hat{\Sigma} \times \mathbb{P}^{1} \rightarrow \Sigma^{u} \times \mathbb{P}^{1}$. The disintegration $\{\hat{\nu}^k_{\hat{x}}\}_{\hat{x} \in \hat{\Sigma}}$ descends under this projection to a disintegration $\{\nu ^k_{x}\}_{x \in \Sigma^{u}}$ with the property that for $\mu_{k}^{u}$-a.e.\ $x$, 

\begin{displaymath}
A_{k}(x)_{*}\nu ^k_{x} = \nu ^k_{f_{u}(x)}.
\end{displaymath}

We first show that $A(x)_{*}\nu_{x} = \nu_{f_{u}(x)}$ for $\mu^{u}$-a.e.\ $x\in \Sigma ^u$. Let $\eta$ be the probability measure on $\Sigma^{u} \times \mathbb{P}^{1}$ with disintegration $\{A^{-1}(x)_{*}\nu_{f_{u}(x)}\}_{x \in \Sigma^{u}}$. It suffices for this claim to prove that $\eta = \nu$, since the disintegration of $\nu$ along the $\Pone$ fibers is unique up to $\mu^{u}$-null sets. Let $\varphi: \Sigma^{u} \times \mathbb{P}^{1} \rightarrow \mathbb{R}$ be a continuous function and define
\begin{displaymath}
\Phi(x) = \int_{\mathbb{P}^{1}}\varphi(x,A^{-1}(x)v)\,d\nu_{f_{u}(x)}(v).
\end{displaymath} 
Since $\mu^{u}$ is $f_{u}$-invariant and admits a positive Jacobian $J_{\mu ^u}f_{u}$ with respect to $f_u$, 
\begin{align*}
\int_{\Sigma^{u}}\Phi(x) \,d\mu^{u}(x) &= \int_{\Sigma^{u}}\left(\sum_{y \in f_{u}^{-1}(x)}\frac{1}{J_{\mu ^u}f_{u}(y)}\Phi(y)\right) \,d\mu^{u}(x) \\
&=\int_{\Sigma^{u}}\int_{\mathbb{P}^{1}}\sum_{y \in f_{u}^{-1}(x)}\frac{1}{J_{\mu ^u}f_{u}(y)}\varphi(y,A^{-1}(y)v)\,d\nu_{x}(v) \,d\mu^{u}(x) \\
&= \int_{\Sigma^{u}\times\mathbb{P}^{1}}\sum_{y \in f_{u}^{-1}(x)}\frac{1}{J_{\mu ^u}f_{u}(y)}\varphi(y,A^{-1}(y)v)\,d\nu(x,v).
\end{align*} 
On the other hand, 
\begin{align*}
\int_{\Sigma^{u}}\Phi(x) \,d\mu^{u}(x) &= \int_{\Sigma^{u}}\int_{\mathbb{P}^{1}}\varphi(x,A^{-1}(x)v)\,d\nu_{f_{u}(x)}(v)\,d\mu^{u}(x) \\
&= \int_{\Sigma^{u}\times\mathbb{P}^{1}}\varphi(x,v)\, d\eta(x,v).
\end{align*}
Hence it suffices to show that for every continuous map $\varphi: \Sigma^{u} \times \mathbb{P}^{1} \rightarrow \mathbb{R}$ we have
\begin{displaymath}
\int \varphi \,d\nu = \int \sum_{y \in f_{u}^{-1}(x)}\frac{1}{J_{\mu ^u}f_{u}(y)}\varphi(y,A^{-1}(y)v)\,d\nu(x,v).
\end{displaymath}
But for each $k$ we know that for $\mu^{u}_{k}$-a.e.\ $x \in \Sigma^{u}$ we have $A_{k}^{-1}(x)_{*}\nu ^k_{f_{u}(x)} = \nu ^k_{x}$. The same calculation as above shows that the above equality holds with appropriate modifications for $\nu_{k}$, i.e., 
\begin{displaymath}
\int \varphi \,d\nu_{k} = \int \sum_{y \in f_{u}^{-1}(x)}\frac{1}{J_{\mu ^u_{k}}f_{u}(y)}\varphi(y,A_{k}^{-1}(y)v)\,d\nu_{k}(x,v)
\end{displaymath}
By assumption, $\nu_{k}$ converges to $\nu$ in the weak-$*$ topology, $A_{k}^{-1} \rightarrow A^{-1}$ uniformly, and $J_{\mu ^u_{k}}f_u \rightarrow J_{\mu ^u}f_u$ uniformly. It follows that this equality holds in the limit $k \rightarrow \infty$, and hence that $A(x)_{*}\nu_{x} = \nu_{f_{u}(x)}$ for $\mu^{u}$-a.e.\ $x$.

The disintegration of the measure $\hat{\nu}$ along the $\Pone$ fibers of $\hat{\Sigma} \times \Pone$ can be recovered from the disintegration of $\nu$ along the $\Pone$ fibers of $\Sigma^{u} \times \Pone$ by the formula
\[
\hat{\nu}_{\hat{x}} = \lim_{n \rightarrow \infty} A^{n}(P^{u}(\hat{f}^{-n}(\hat{x})))_{*}\nu_{P^{u}(\hat{f}^{-n}(\hat{x}))}
\] 
(see Lemma 3.4 of \cite{AvilaVianaExtLyapInvPrin}) for $\hat{\mu}$-a.e.\ $\hat{x}$. But we have just shown that 
\[
A^{n}(P^{u}(\hat{f}^{-n}(\hat{x})))_{*}\nu_{P^{u}(\hat{f}^{-n}(\hat{x}))} = \nu_{P^{u}(\hat{x})}
\]
for every $n$. Hence we conclude that 
\[
\hat{\nu}_{\hat{x}} = \nu_{P^{u}(\hat{x})}
\]
and thus $\hat{\nu}_{\hat{x}} = \hat{\nu}_{\hat{y}}$ for $\hat{y} \in W^{s}_{loc}(\hat{x})$. 
\end{proof}

\subsection{Continuity of conditional measures} \label{sec: continuity conditional measures}

From now on we will write $\Sigma$, $f$, $P$ and $\mu$ for $\Sigma^{u}$, $f_u$, $P^{u}$, and $\mu^{u}$, respectively. Moreover, from the proof of Lemma \ref{states} it follows that an arbitrary sequence of cocycles $\{\hat{A}_{k}\}_{k \in \mathbb{N}}$ converging uniformly with holonomies to a cocycle $\hat{A}$ may be straightened out using the stable holonomies so that each $\hat{A}_{k}$ and $\hat{A}$ are constant on local stable sets and the property of uniform convergence is preserved.  Moreover, the straightened out cocycles still admit $u$-holonomies and the $u$-holonomies also converge uniformly.  

Consider such a cocycle $\hat A$ that has been straightened out along stable holonomies.  %We assume that this has been done once and for all. 
We write  $A:\Sigma \to \Gl(2, \R)$ for the continuous map defined by $\hat A = A\circ P$. In particular, $A(x)=\hat{A}(\hat{x})$ for every $\hat{x}\in W^s_{loc}(x)$.

\subsubsection{Measures induced from a $u$-state}
In the sequel, we will be primarily interested in families of measure on $\Sigma\times \Pone$ induced from measures on $\hat \Sigma\times \Pone$ with  certain dynamical properties.  The measures on $\Sigma\times \Pone$ will in turn have certain geometric properties that we describe here.

\begin{definition}\label{def:meas}
A probability measure $m$ on $\Sigma \times \Pone$  is said to be \emph{induced from a $u$-state} if there exists 
\begin{itemize}
\item a cocycle $\hat A\colon \hat \Sigma \to \Gl(2, \R)$ that is constant along local stable manifolds and  admits a continuous family of unstable holonomies $H^{u, \hat A}$, 
\item a fully supported measure $\hat \mu$ on $\hat \Sigma$ with local product structure, 
\item and an $\hat{F}_{\hat{A}}$-invariant measure $\hat{m}$ on $\hat{\Sigma} \times \Pone$  projecting to  $\hat{\mu} $ such that $\hat{m}$ is a $u$-state for the holonomies $H^{u,\hat A}$ with $m=(P\times Id)_{\ast}\hat{m}.$
\end{itemize}
\end{definition}
Note that such an $m$ is necessarily $F_A$-invariant, where $A$ is such that $\hat A = A\circ P$ as above. 

\subsubsection{Continuity of the disintegration of  measures induced from $u$-states}
They key geometric fact we exploit in the remainder of the paper is  that every measure $m$ induced from a $u$-state admits a disintegration into a \emph{continuous} family of conditional measures $\lbrace m_{x}: x\in \Sigma \rbrace$.  The continuity properties of the conditional measures of $m$ were first established in \cite{BV}; in this section we establish additional equicontinuity properties of the conditional measures over families of linear cocycles on which  unstable holonomies exist and vary continuously. 

We retain all notation from Definition  \ref{def:meas}.
Observe that if 
$m=(P\times Id)_{\ast}\hat{m}$ and  $\lbrace \hat{m}_{\hat{x}}: \hat{x}\in \hat{\Sigma} \rbrace$ is a disintegration of $\hat{m}$ along the fibers $\lbrace \hat{\pi}^{-1}(\hat{x}); \hat{x}\in \hat{\Sigma}\rbrace$ 
then for $x\in \Sigma$ 
\begin{equation}\label{eq:thisgoose}
m_x=\int _{W^s_{\loc}(x)} \hat{m}_{\hat{x}} \ d\hat{\mu}_x(\hat{x})
\end{equation}
is a disintegration of $m$ relative to $\lbrace \pi ^{-1}(x); x\in \Sigma\rbrace$ where $\pi :\Sigma\times  \mathbb{P}^{1}\rightarrow \Sigma$ is the canonical projection.

\begin{proposition}\label{prop: continuity of A mu invariant}
Any probability measure $m$ induced from a $u$-state admits a disintegration into conditional measures $\{m_x\}_{x\in \Sigma}$ that are defined for every $x\in \Sigma$ and vary continuously with $x$ in the weak-$*$ topology.  
\end{proposition}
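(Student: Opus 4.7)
The plan is to adopt formula (\ref{eq:thisgoose}) as the definition of $m_x$ on the natural $\mu$-full measure set $G \subset \Sigma$ where the integral makes sense, establish continuity on $G$ by a change of variables along the unstable-holonomy map $h_{x,y}$, and then extend by density to every $x \in \Sigma$. Concretely, fix a disintegration $\{\hat{m}_{\hat{x}}\}$ of $\hat{m}$ which is essentially $u$-invariant on a $\hat{\mu}$-full measure, $u$-saturated subset $E^u \subset \hat{\Sigma}$; since $\hat{\mu} = \int \hat{\mu}_x\, d\mu(x)$, Fubini yields $\hat{\mu}_x(E^u \cap W^s_{\loc}(x)) = 1$ for $\mu$-a.e.\ $x \in \Sigma$, so (\ref{eq:thisgoose}) returns a well-defined probability measure $m_x$ on $\Pone$ for $x$ in this full-measure set $G$.

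The key computation is as follows. For $x,y$ both in $G$ and in a common cylinder $[0;i]$, any test function $\phi \in C(\Pone)$ gives, after the change of variables $\hat{y} = h_{x,y}(\hat{x})$ via the Jacobian formula from Lemma \ref{lemma:continuity mu},
\begin{align*}
\int_{\Pone} \phi \, dm_y = \int_{W^s_{\loc}(x)} \left(\int_{\Pone} \phi \, d\hat{m}_{h_{x,y}(\hat{x})}\right) R_{x,y}(\hat{x})\, d\hat{\mu}_x(\hat{x}).
\end{align*}
Applying the essential $u$-invariance, which reads $\hat{m}_{h_{x,y}(\hat{x})} = (H^{u,\hat{A}}_{\hat{x}, h_{x,y}(\hat{x})})_{*} \hat{m}_{\hat{x}}$ and is valid since $E^u$ is $u$-saturated and $h_{x,y}(\hat{x}) \in W^u_{\loc}(\hat{x})$ by construction, the right-hand side rewrites as
\begin{align*}
\int_{W^s_{\loc}(x)} \left(\int_{\Pone} \phi \circ \P H^{u,\hat{A}}_{\hat{x}, h_{x,y}(\hat{x})} \, d\hat{m}_{\hat{x}}\right) R_{x,y}(\hat{x})\, d\hat{\mu}_x(\hat{x}).
\end{align*}
As $y \to x$ inside $[0;i]$, the point $h_{x,y}(\hat{x})$ tends to $\hat{x}$ uniformly in $\hat{x} \in W^s_{\loc}(x)$, $R_{x,y}(\hat{x})$ tends to $1$ uniformly by continuity of $R$, and $\P H^{u,\hat{A}}_{\hat{x}, h_{x,y}(\hat{x})}$ tends to the identity on $\Pone$ uniformly in $\hat{x}$ by continuity of $H^{u,\hat{A}}$. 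Since $\phi$ is uniformly continuous on the compact space $\Pone$, the bracketed inner integral converges uniformly in $\hat{x}$ to $\int_{\Pone} \phi \, d\hat{m}_{\hat{x}}$, and hence $\int \phi \, dm_y \to \int \phi \, dm_x$.

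This establishes continuity of $x \mapsto m_x$ on $G$ with a modulus of continuity that depends on $\phi$ only through its own modulus. Full support of $\hat{\mu}$ transfers to $\mu = P_{*}\hat{\mu}$, so $G$ is dense in $\Sigma$; for any $x \in \Sigma$ and any sequence $x_n \in G$ with $x_n \to x$, all but finitely many $x_n$ lie in a fixed cylinder containing $x$, and the uniform estimate above makes $\{m_{x_n}\}$ a Cauchy sequence in the weak-$*$ metric on probability measures on $\Pone$. Define $m_x$ to be its limit; this is independent of the approximating sequence, coincides with the original definition on $G$, and a routine application of Fubini confirms that the resulting family $\{m_x\}_{x \in \Sigma}$ still disintegrates $m$.

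The main obstacle is purely one of measurability bookkeeping: the $u$-invariance identity holds only on a $\hat{\mu}$-full measure set, so one must take care that the change of variables respects these exceptions. Choosing the defining set $E^u$ of the disintegration to be $u$-saturated is the key move---then $\hat{x} \in E^u$ if and only if $h_{x,y}(\hat{x}) \in E^u$, and the absolute continuity of $h_{x,y}$ from Lemma \ref{lemma:continuity mu} ensures that the transported identity holds on a $\hat{\mu}_x$-conull subset of $W^s_{\loc}(x)$ for all $x,y$ in $G$ lying in a common cylinder, so both sides of the change of variables legitimately compute $\int \phi\, dm_y$.
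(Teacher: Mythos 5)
Your proof follows essentially the same path as the paper's: the change of variables $\hat{y}=h_{x,y}(\hat{x})$ with Jacobian $R_{x,y}$ from Lemma \ref{lemma:continuity mu}, the $u$-invariance of the disintegration of $\hat{m}$, and the uniform continuity of $H^{u,\hat{A}}$ combine to give the modulus of continuity for $x\mapsto m_x$. The only difference is that you are more explicit than the paper about the measurability bookkeeping---fixing a $u$-saturated full-measure set $E^u$, identifying the set $G$ on which \eqref{eq:thisgoose} literally makes sense, and then extending to all of $\Sigma$ by density using full support of $\hat\mu$ and the uniformity of the estimate---steps the paper's terse proof leaves implicit but which are genuinely needed to justify ``defined for every $x\in\Sigma$.''
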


\begin{proof}
Let $\hat{m}$ be a $u$-state such that $(P\times Id)_{\ast}\hat{m}= m$ and $\{\hat{\mu}_x\}_{x\in \Sigma}$ a disintegration of $\hat{\mu}$ as in Lemma \ref{lemma:continuity mu}. Take a disintegration $(\hat{m}_{\hat{x}})_{\hat{x}\in \hat{\Sigma}}$ of $\hat{m}$  such that for $\hat{\mu}$-a.e.\ $\hat{x} \in \hat{\Sigma}$,
$$(H^{u,\hat{A}}_{\hat{x} \hat{y}}) _{\ast} \hat{m}_{\hat{x}}=\hat{m}_{\hat{y}} \; \textrm{for every} \; \hat{y}\in W^u_{\loc}(\hat{x})
$$
and let $\{m_x\}_{x\in \Sigma}$ be the disintegration of $m$ as in \eqref{eq:thisgoose}. 

Let $g\colon \Pone \rightarrow \mathbb{R} $ be  continuous  and consider $x,y\in \Sigma$ in the same cylinder $[0;i]$. Then, changing variables $\hat{y}=h_{x,y}(\hat{x})$ we get that
\begin{displaymath}
\begin{split}
\int _{\Pone} g dm_y &=\int _{W^s_{\loc}(y)} \int _{\Pone} g d\hat{m}_{\hat{y}} d\hat{\mu}_y(\hat{y})\\
&= \int _{W^s_{\loc}(x)} \biggl( \int _{\Pone} g\circ H^{u,\hat{A}}_{\hat{x}\hat{y}}d\hat{m}_{\hat{x}} \biggr) { R_{x,y}}(\hat{x})d\hat{\mu}_x(\hat{x})
\end{split}
\end{displaymath}
since $\hat{m}$ is an u-sate. Thus,
\begin{displaymath}
\left| \int g dm_y - \int g dm_x \right| \leq \int _{W^s_{\loc}(x)}  \int _{\Pone} \left| g\circ H^{u,\hat{A}}_{\hat{x}\hat{y}}\cdot R_{x,y}(\hat{x}) -g\right| d\hat{m}_{\hat{x}}d\hat{\mu}_x(\hat{x}).
\end{displaymath}

From the continuity properties of unstable holonomies (see Definition \ref{defn: invariant holonomies}) we have that $\| H^{u,\hat{A}}_{\hat{x}\hat{y}} -Id \|$  is uniformly close to zero whenever $x$ and $y$ are close. Moreover, Lemma \ref{lemma:continuity mu} implies that $\| R_{x,y}-1\|_{L^{\infty}}$ is also close to zero whenever $x$ and $y$ are close. Therefore, given $\varepsilon >0$ there exist $\gamma >0$ such that $d(x,y)<\gamma$ implies $\| g\circ H^{u,\hat{A}}_{\hat{x}\hat{y}}\cdot R_{x,y}(\hat{x}) -g\| _{L^{\infty}} <\varepsilon$ and thus $\mid \int g dm_y - \int g dm_x \mid <\varepsilon $ as we want.

\end{proof}

\begin{remark}\label{remark: invariant measure}
A probability measure $m$ in $\Sigma \times \Pone$ is $F_A$-invariant if and only if 
\begin{equation}\label{eq:lammababies}
\sum _{y\in f^{-1}(x)} \dfrac{1}{J_{\mu}f(y)}A(y)_{\ast}m_y=m_x
\end{equation}
for $\mu$ almost every $x\in \Sigma$ and any disintegration $\{m_x\}_{x\in \Sigma}$. When $m$ is induced from a $u$-state and $\{m_x\}_{x\in \Sigma}$ is the  the continuous family of  conditional measures above then  \eqref{eq:lammababies} holds for \textit{every} $x\in \Sigma$.
\end{remark}

We recall the setting of Lemma \ref{states}. 
Let $\hat \mu_k$ be a family of fully supported measures on $\hat \Sigma$ with product structure.  Assume $\hat \mu_k$ converges as in Section \ref{sec: prod} to a  fully supported measure $\hat \mu$ with product structure. In particular, the family of Jacobians $R^k_{x,y}$ associated to the disintegration of $\hat{\mu}_k$ given by Lemma \ref{lemma:continuity mu} converge uniformly to the Jacobians $R_{x,y}$ of $\hat{\mu}$.

For each $k$ let $\hat{A}_k$ be a cocycle that is constant along stable manifolds, and suppose $\hat{A}_k\to \hat{A}$ uniformly.  Moreover assume $\hat{A}_k$ and $\hat{A}$ admits  (unstable) holonomies and that $H^{u,\hat{A}_k}$ converges to $H^{u,\hat{A}}$ as in Section \ref{sec: holonomies}.  %in the method described above.  
For each $k$, let $m_k$ be a measure on $\Sigma\times \Pone$ induced by a $u$-state $\hat m_k$ for the holonomies $H^{u, \hat{A}_k}$ and projecting to  $\mu_k$.  Assume that $\hat m_k$ converges in the weak-$*$ topology to $\hat m$. From Lemma \ref{states} we have that $\hat m$ is a $u$-state for the holonomies $H^{u,\hat{A}}$ and projects to $\hat{\mu}$.  
Let  $m = (P\times Id)_{\ast}\hat{m}$ be the measure induced by the $u$-state $\hat m$.  

Observing that all the convergences above are uniform and following the same lines as in the proof of the previous proposition we get

\begin{proposition}\label{prop: continuity of Ak mu invariant}
The measures $m_k$ and $m$  admit disintegrations into conditional measures $\{m^k_x\}_{x\in \Sigma}$ and $\{m_x\}_{x\in \Sigma} $, respectively, which are defined for every $x\in \Sigma$ and such that the family $\{\{m^k_x\}_{x\in \Sigma},\{m_x\}_{x\in \Sigma} \}_k$ is equicontinuous. More precisely, for every continuous function $g:\Pone\rightarrow \mathbb{R} $ and $\varepsilon >0$ there exists $\delta >0$ such that $d_{\theta}(x,y)<\delta$ implies $\mid \int gdm_x -\int gdm_y\mid <\varepsilon$ and $\mid \int gdm^k_x -\int gdm^k_y\mid <\varepsilon$ for every $k\in \mathbb{N}$.

\end{proposition}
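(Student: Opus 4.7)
The plan is to follow the proof of Proposition \ref{prop: continuity of A mu invariant} line by line, and to observe that, thanks to the uniform convergence hypotheses, every estimate used there can be made uniform in $k$. First, for each $k$ I would choose a $u$-invariant disintegration $\{\hat{m}^k_{\hat{x}}\}$ of $\hat{m}_k$ on a $u$-saturated $\hat{\mu}_k$-full set (and likewise for $\hat{m}$). Combined with the continuous families of conditional measures $\{\hat{\mu}^k_x\}$ and $\{\hat{\mu}_x\}$ provided by Lemma \ref{lemma:continuity mu}, formula \eqref{eq:thisgoose} defines conditional measures $\{m^k_x\}_{x\in\Sigma}$ and $\{m_x\}_{x\in\Sigma}$ at every point of $\Sigma$.

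Given a continuous $g:\Pone\to\R$ and $x,y\in\Sigma$ in a common cylinder $[0;i]$, the change of variables $\hat{y}=h_{x,y}(\hat{x})$ carried out exactly as in Proposition \ref{prop: continuity of A mu invariant} yields
\[
\left|\int g\,dm^k_y - \int g\,dm^k_x\right| \le \bigl\| g\circ H^{u,\hat{A}_k}_{\hat{x}\hat{y}}\cdot R^k_{x,y}(\hat{x}) - g \bigr\|_{L^\infty},
\]
and an analogous bound holds for $m$, where $R^k_{x,y}(\hat x)=\psi_k(h_{x,y}(\hat x))/\psi_k(\hat x)$ is the Jacobian of $h_{x,y}$ for $\hat{\mu}_k$. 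It remains to produce $\delta>0$, depending only on $g$ and $\varepsilon$, such that $d_{\theta}(x,y)<\delta$ makes the right-hand side less than $\varepsilon$ for every $k$ and for $m$ simultaneously.

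The heart of the proof is a pair of uniform-in-$k$ equicontinuity claims. Since $H^{u,\hat{A}_k}\to H^{u,\hat{A}}$ uniformly on the compact space $\Omega^u$ and the limit is continuous with $H^{u,\hat{A}}_{\hat{x}\hat{x}}=Id$, a standard Arzel\`a--Ascoli argument shows that $\{H^{u,\hat{A}_k}\}\cup\{H^{u,\hat{A}}\}$ is equicontinuous, so $\|H^{u,\hat{A}_k}_{\hat{x}\hat{y}}-Id\|$ is uniformly small in $k$ whenever $\hat{y}$ is close to $\hat{x}$. Analogously, uniform convergence $\psi_k\to\psi$ together with positivity of the continuous limit $\psi$ gives uniform convergence $R^k_{x,y}\to R_{x,y}$, and since $R_{x,x}\equiv 1$, we obtain that $\|R^k_{x,y}-1\|_{L^\infty}$ is uniformly small in $k$ as $d_\theta(x,y)\to 0$. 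Combining these two uniform bounds with the uniform continuity of $g$ on $\Pone$ yields the required $\delta$. I do not foresee a serious obstacle: the argument is the previous proposition, with the scalar continuity statements there promoted to equicontinuity via the uniform convergence hypotheses and Arzel\`a--Ascoli.
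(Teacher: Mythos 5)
Your argument is correct and matches what the paper intends: the paper's ``proof'' is the single remark that ``all the convergences above are uniform'' and that one should ``follow the same lines as in the proof of the previous proposition,'' which is precisely what you do by upgrading each continuity estimate from Proposition \ref{prop: continuity of A mu invariant} to a uniform-in-$k$ estimate. One small quibble: the fact you invoke (a uniformly convergent sequence of continuous maps on a compact space with continuous limit is equicontinuous) is a direct $\varepsilon/3$ argument rather than Arzel\`a--Ascoli, but the conclusion and its use are exactly right.
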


    Let $\{m^k_x\}_{x\in \Sigma}$ and $\{m_x\}_{x\in \Sigma}$ be the  continuous family of  conditional measures  constructed above.

\begin{lemma}\label{lemma:convergence of mkx to mx}
For any $x\in \Sigma$, $m^k_x\to m_x$.  Moreover, the convergence is uniform in $x$.
\end{lemma}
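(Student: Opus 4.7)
The plan is to run an Arzel\`a--Ascoli argument that exploits the equicontinuity supplied by Proposition~\ref{prop: continuity of Ak mu invariant}. Fix a continuous test function $g\colon \Pone \to \R$ and define
\begin{displaymath}
G_k(x) = \int g\, dm_x^k, \qquad G(x) = \int g\, dm_x.
\end{displaymath}
Proposition~\ref{prop: continuity of Ak mu invariant} says that $\{G_k\}_{k}$ together with $G$ is a uniformly equicontinuous and uniformly bounded (by $\|g\|_{\infty}$) family of continuous functions on the compact space $\Sigma$. Consequently, Arzel\`a--Ascoli lets me extract from any subsequence a further subsequence converging uniformly to some continuous $\widetilde G$ on $\Sigma$.

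The next step is to identify every such subsequential limit $\widetilde G$ with $G$. For this I would test against an arbitrary continuous $\phi\colon \Sigma \to \R$. Since the projection $P\times Id$ is continuous and $\hat m_k \to \hat m$ in the weak-$*$ topology on $\hat \Sigma \times \Pone$, $m_k = (P\times Id)_{\ast}\hat m_k$ converges to $m$ in weak-$*$ on $\Sigma \times \Pone$. Applied to the continuous function $(x,v) \mapsto \phi(x)g(v)$ this gives
\begin{displaymath}
\int \phi(x) G_k(x)\, d\mu_k(x) \;=\; \int \phi(x)g(v)\, dm_k(x,v) \;\longrightarrow\; \int \phi(x) g(v)\, dm(x,v) \;=\; \int \phi(x) G(x)\, d\mu(x).
\end{displaymath}
On the other hand, along the uniformly convergent subsequence $G_{k_j}\to \widetilde G$, and since $\mu_{k_j} \to \mu$ in weak-$*$ (as $\hat \mu_k$ converges as in Section~\ref{sec: prod}), a standard triangle-inequality split yields
\begin{displaymath}
\int \phi G_{k_j}\, d\mu_{k_j} \;\longrightarrow\; \int \phi \widetilde G\, d\mu.
\end{displaymath}
Thus $\int \phi G\, d\mu = \int \phi \widetilde G\, d\mu$ for every continuous $\phi$, so $G = \widetilde G$ $\mu$-a.e.\, and by continuity of both sides together with the hypothesis that $\mu$ has full support, $G = \widetilde G$ everywhere on $\Sigma$. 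Since every subsequential uniform limit equals $G$, we conclude $G_k \to G$ uniformly on $\Sigma$.

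Finally, to upgrade this to the weak-$*$ convergence $m_x^k \to m_x$ uniformly in $x$, I would choose a countable dense sequence $\{g_i\} \subset C(\Pone)$, apply the previous argument to each $g_i$, and combine this with the equicontinuity in $x$ provided by Proposition~\ref{prop: continuity of Ak mu invariant}; this standard density/equicontinuity argument upgrades pointwise-in-$g$ uniform-in-$x$ convergence of integrals to uniform weak-$*$ convergence. The only subtle point in the whole argument is the simultaneous handling of the two limits (uniform convergence of the $G_k$ and weak-$*$ convergence of the $\mu_k$) in the display above, but this is routine. No serious obstacle arises.
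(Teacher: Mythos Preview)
Your argument is correct. Both proofs rest on the same two ingredients---the equicontinuity of Proposition~\ref{prop: continuity of Ak mu invariant} and the weak-$*$ convergence $m_k\to m$, $\mu_k\to\mu$---but they are packaged differently. The paper gives a direct $\varepsilon$-estimate: it covers $\Sigma$ by finitely many clopen sets $V_i$ of small diameter, then for $x\in V_i$ bounds $\left|\int g\,dm^k_x-\int g\,dm_x\right|$ by replacing each term with its $\mu$- (respectively $\mu_k$-) average over $V_i$ and controlling the four resulting discrepancies via equicontinuity and the weak-$*$ convergence of $m_k$ and $\mu_k$. Your route is the compactness reformulation of the same idea: Arzel\`a--Ascoli converts equicontinuity into relative compactness of $\{G_k\}$ in $C(\Sigma)$, and then you identify every subsequential limit with $G$ by testing against continuous $\phi$ and invoking the full support of $\mu$. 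The paper's argument is slightly more constructive and avoids the subsequence bookkeeping; yours is cleaner and makes the role of full support (which the paper also uses, implicitly, when dividing by $\mu(V_i)$) more transparent.
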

\begin{proof}
Let $g\colon \Pone\rightarrow \mathbb{R}$ be  continuous  and $\varepsilon >0$. By Proposition \ref{prop: continuity of Ak mu invariant}, there exists $\delta >0 $ such that, if $d_{\theta}(x,y)\leq \delta$ then
\begin{equation}\label{eq: convergence of mkx to mx}
\begin{split}
&\left| \int _{\Pone} g dm_x -\int _{\Pone} g dm_y \right| < \dfrac{\varepsilon}{10} \\
\textrm{and} \quad  \\
&\left| \int _{\Pone} g dm^k_x -\int _{\Pone} g dm^k_y \right| < \dfrac{\varepsilon}{10} \\
\end{split}
\end{equation}
for every $k\in \mathbb{N}$. Cover $\Sigma$ with finitely many clopen sets $V_i$ such that $\textrm{diam}(V_i)<\delta$. As $m_k$ converges to $m$ there exists $k_0\in \mathbb{N}$ such that 
\begin{equation}\label{eq: convergence of mkx to mx 2}
\left| \int _{V_i} \left( \int _{\Pone} g dm^k_x \right) d\mu_k (x)- \int _{V_i} \left( \int _{\Pone} g dm_x \right) d\mu (x) \right| < \frac{\varepsilon \mu ({V_i})}{10}
\end{equation}
and taking $M =\max\{1, \max |g|\}$ 
$$ \left | 1- \frac{\mu_k(V_i)}{\mu(V_i)}\right|\le \frac{\varepsilon}{10M}$$
for every $k\geq k_0$ and each $V_i$.  

Given $x\in \Sigma $  take $V_i$ with $x\in V_i$.  Then 
\begin{align*}
\left| \int _{\Pone} g   \ dm^k_x -\int _{\Pone} g   \ dm_x \right| &\\
=&\dfrac{1}{\mu (V_i)}  \left|  \int _{V_i} \left( \int _{\Pone} g   \ dm^k_x \right) d\mu (y)- \int _{V_i} \left( \int _{\Pone} g   \ dm_x \right) d\mu (y) \right| \\
%%%%%%
\leq&
 \dfrac{1}{\mu (V_i)} \left| \int _{V_i}  \int _{\Pone} g   \ dm^k_x  d\mu (y)- \int _{V_i} \int _{\Pone} g   \ dm^k_x  d\mu_k (y)   \right| \\
&+ \dfrac{1}{\mu (V_i)}  \left|  \int _{V_i}  \int _{\Pone} g   \ dm^k_x  d\mu_k (y)- \int _{V_i}  \int _{\Pone} g   \ dm^k_y  d\mu_k (y)   \right| \\  
&+ \dfrac{1}{\mu (V_i)} \left|   \int _{V_i}  \int _{\Pone} g   \ dm^k_y  d\mu_k (y)- \int _{V_i} \int _{\Pone} g   \ dm_y d\mu (y)  \right| \\
& + \dfrac{1}{\mu (V_i)} \left| \int _{V_i}  \int _{\Pone} g   \ dm_y d\mu (y)- \int _{V_i}  \int _{\Pone} g   \ dm_x d\mu (y) \right| \\
%\end{split}
%\end{displaymath}
%\begin{displaymath}
%\begin{split}
\leq& 
 M\left |1 - \frac{\mu_k(V_i)}{\mu(V_i)}\right|\\
&+ \dfrac{1}{\mu (V_i)} \left( \int _{V_i}  \int _{\Pone} \left| g   \ dm^k_x -   g   \ dm^k_y \right| d\mu _k(y) \right) \\
 &+  \dfrac{1}{\mu (V_i)} \left| \int _{V_i}  \int _{\Pone} g   \ dm^k_y  d\mu _k(y)- \int _{V_i} \int _{\Pone} g   \ dm_y d\mu (y) \right|  \\
 &+\dfrac{1}{\mu (V_i)} \left( \int _{V_i}  \int _{\Pone}  \left| g \ dm_y -  g \  dm_x \right| d\mu (y) \right) 
\\ \leq &\dfrac{ \varepsilon}{10} +\left(1 +\dfrac{ \varepsilon}{M10} \right) \dfrac{ \varepsilon}{10} + \dfrac{ \varepsilon}{10}+ \dfrac{ \varepsilon}{10} % \qedhere\end{align*}
\\ \leq &\dfrac{5 \varepsilon}{10}. \qedhere\end{align*}
\end{proof}

\section{Reductions in the proof of  Theorem \ref{mainthm}} \label{sec: proof of the main theorem}
We begin the proof of Theorem \ref{mainthm}. We start by observing that it suffices to prove continuity for cocycles taking values in $SL(2,\mathbb{R})$ instead of $GL(2,\mathbb{R})$. By continuity of $\hat{A}$ and compactness of $\hat{\Sigma}$, the function $s(\hat{x}) = \text{sgn}(\det(A(x)))$ is continuous on $\hat{\Sigma}$. Given $\hat{A}:\hat{\Sigma} \rightarrow GL(2,\mathbb{R})$ consider $g_{\hat{A}}:\hat{\Sigma}\rightarrow \mathbb{R}$ defined by $g_{\hat{A}}(\hat{x})=s(\hat{x})(|\det \hat{A}(\hat{x})|)^{\frac{1}{2}}$ and $\hat{B}:\hat{\Sigma} \rightarrow SL(2,\mathbb{R})$ such that $\hat{A}(\hat{x})=g_{\hat{A}}(\hat{x})\hat{B}(\hat{x})$. Thus, since

\begin{displaymath}
\lambda ^{\pm}(\hat{A},\hat{\mu})=\lambda ^{\pm}(\hat{B},\hat{\mu}) + \int \log |g_{\hat{A}}(\hat{x})|\  d\hat{\mu}(\hat{x}),
\end{displaymath}
and $g_{\hat{A}_{k}} \rightarrow g_{\hat{A}}$ uniformly, we get that $\lambda ^{\pm}(\hat{A}_{k},\hat{\mu}_{k}) \rightarrow \lambda ^{\pm}(\hat{A},\hat{\mu})$ if and only if $\lambda ^{\pm}(\hat{B}_{k},\hat{\mu}_{k}) \rightarrow \lambda ^{\pm}(\hat{B},\hat{\mu})$ where $\hat{B}_{k}$ is defined analogously to $\hat{B}$ for $\hat{A}_{k}$. Moreover, 
\begin{displaymath}
\lambda ^{+}(\hat{A},\hat{\mu})=\lambda ^{-}(\hat{A},\hat{\mu}) \Longleftrightarrow \lambda ^{+}(\hat{B},\hat{\mu})=0=\lambda ^{-}(\hat{B},\hat{\mu}).
\end{displaymath}
From now on, we will assume that our cocycles always take values in $SL(2,\mathbb{R})$.

The  proof of Theorem \ref{mainthm} is by contradiction.
Suppose  $(\hat A, \hat\mu, H^{s,\hat A}, H^{u,\hat A})$ and $(\hat A_k, \hat\mu_k, H^{s,\hat A_k}, H^{u,\hat A_k})$ are as in Theorem  \ref{mainthm}. Moreover, suppose for the purposes of contradiction that \begin{equation}\label{eq:mainassump}\lambda_+(\hat A_k,\hat\mu_k) \not \to \lambda_+(\hat A,\hat\mu).\end{equation}  We then also have $\lambda_-(\hat A_k,\hat\mu_k) \not \to \lambda_-(\hat A,\hat\mu).$

\subsection{Characterization of discontinuity points}\label{sec: discontinuity points} 
From \cite[Lemma 9.1]{V2}  we have that the functions $(\hat B, \hat \nu)\mapsto \lambda_+(\hat B, \hat \nu)$ and $(\hat B, \hat \nu)\mapsto \lambda_-(\hat B, \hat \nu)$ are, respectively, upper- and lower-semicontinuous with respect to the  topology of uniform convergence on continuous cocycles $\hat B$ and weak-$*$ convergence in $\hat \nu$.   Thus, assuming \eqref{eq:mainassump} we may assume 
$\lambda_-({\hat{A}},\hat{\mu})<0 <\lambda _+({\hat{A}},\hat{\mu})$.

Let $\mathbb{R}^2= E^{s,\hat{A}}_{\hat{x}}\oplus E^{u,\hat{A}}_{\hat{x}}$
be the Oseledets decomposition associated to $\hat{A}$ at the point $\hat x\in \hat \Sigma$. Consider the measures on $\hat \Sigma \times \Pone$ defined by 
\begin{displaymath}
\hat{m}^s=\int _{\hat{\Sigma}}\delta _{(\hat{x},E^{s,\hat{A}}_{\hat{x}})} d\hat{\mu}(\hat{x})
\quad \textrm{and} \quad
\hat{m}^u=\int _{\hat{\Sigma}}\delta _{(\hat{x},E^{u,\hat{A}}_{\hat{x}})} d\hat{\mu}(\hat{x}).
\end{displaymath}

By construction, $\hat{m}^s$ and $\hat{m}^u$ are $\hat{F}_{\hat{A}}$-invariant probability measures with projections $\hat{\mu}$. Moreover, $\hat{m}^s$ is an $s$-state (with disintegration $\{\delta_{E^{s,\hat A}_{\hat x}}\}_{\hat x\in \hat \Sigma}$) and $\hat{m}^u$ is a $u$-state.  By the Birkhoff ergodic theorem
\begin{displaymath}
\lambda _-(\hat{A},\hat{\mu}) =\int _{\hat{\Sigma}\times \Pone} \Phi _{\hat{A}}(\hat{x},v) \ d\hat{m}^s (\hat{x}, v)
\end{displaymath}
and
\begin{displaymath}
\lambda _+(\hat{A}, \hat{\mu}) =\int _{\hat{\Sigma}\times \Pone} \Phi _{\hat{A}}(\hat{x},v) \ d\hat{m}^u (\hat{x}, v)
\end{displaymath}
where $$ \Phi _{\hat{A}}(\hat{x},v) = \frac{\|\hat A(\hat x)(v)\|}{\|v\|}.$$

By the (non-uniform) hyperbolicity of $(\hat{A},\hat{\mu})$ we have the following.  
\begin{claim}\label{lemma:convex combination}
Let $\hat{m}$ be a probability measure on $\hat{\Sigma}\times \Pone$ projecting  to $\hat{\mu}$. Then, $\hat{m}$ is $\hat{F}_{\hat{A}}$-invariant if and only if it is a convex combination of $\hat{m}^s$ and $\hat{m}^u$: $\hat m = \alpha \hat m^s + \beta \hat m^u$ where $\alpha$ and $\beta$ are constant. 
\end{claim}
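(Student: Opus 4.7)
The ``if'' direction is immediate: since both $\hat{m}^s$ and $\hat{m}^u$ are $\hat{F}_{\hat{A}}$-invariant probability measures projecting to $\hat{\mu}$, so is any convex combination. For the ``only if'' direction, my plan is to decompose an arbitrary invariant $\hat m$ into its portion supported on the graph of $E^{s,\hat A}$ and its portion supported on the graph of $E^{u,\hat A}$, and show that no other mass survives.

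First, since $\hat{A}(\hat{x})E^{s,\hat{A}}_{\hat{x}}=E^{s,\hat{A}}_{\hat{f}(\hat{x})}$ (and similarly for $E^{u,\hat A}$) $\hat{\mu}$-a.e., the function
\[
\alpha(\hat x) := \hat{m}_{\hat{x}}(\{E^{s,\hat{A}}_{\hat{x}}\})
\]
satisfies $\alpha(\hat{f}(\hat{x})) = \alpha(\hat{x})$ $\hat{\mu}$-a.e.\ by the invariance relation $\mathbb{P}\hat{A}(\hat{x})_{\ast}\hat{m}_{\hat{x}} = \hat{m}_{\hat{f}(\hat{x})}$. By ergodicity, $\alpha$ is equal to a constant which, by abuse of notation, I also call $\alpha$. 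Setting
\[
\hat{m}'_{\hat{x}} = \alpha\,\delta_{E^{s,\hat{A}}_{\hat{x}}}, \qquad \hat{m}''_{\hat{x}} = \hat{m}_{\hat{x}} - \alpha\,\delta_{E^{s,\hat{A}}_{\hat{x}}},
\]
both $\hat{m}' = \alpha \hat{m}^s$ and $\hat{m}''$ are non-negative and $\hat{F}_{\hat{A}}$-invariant (the first by definition of $\hat m^s$, the second by subtraction), and $\hat{m}''_{\hat x}$ assigns zero mass to $E^{s,\hat{A}}_{\hat{x}}$ for $\hat{\mu}$-a.e.\ $\hat{x}$. Moreover $\hat m''_{\hat x}$ has total mass $1-\alpha$ for $\hat\mu$-a.e.\ $\hat x$.

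The main step is to show $\hat{m}''$ is supported on the graph $\{(\hat{x}, E^{u,\hat{A}}_{\hat{x}}) : \hat{x} \in \hat{\Sigma}\}$. Fix any continuous distance $d$ on $\mathbb{P}^1$ and define the bounded, measurable function
\[
\Phi(\hat{x}, v) = d(v, E^{u,\hat{A}}_{\hat{x}}).
\]
The key hyperbolicity input is the Oseledets convergence: for $\hat{\mu}$-a.e.\ $\hat{x}$ and any $v \neq E^{s,\hat{A}}_{\hat{x}}$, splitting $v = v^s + v^u$ along $E^{s,\hat A}_{\hat x}\oplus E^{u,\hat A}_{\hat x}$ gives $\|\hat{A}^n(\hat{x})v^s\| / \|\hat{A}^n(\hat{x})v^u\| \to 0$, so that
\[
\Phi(\hat{F}_{\hat{A}}^n(\hat{x},v)) = d(\mathbb{P}\hat{A}^n(\hat{x})v,\,E^{u,\hat{A}}_{\hat{f}^n(\hat{x})}) \longrightarrow 0.
\]
Since $\hat{m}''_{\hat{x}}(\{E^{s,\hat{A}}_{\hat{x}}\}) = 0$ for $\hat{\mu}$-a.e.\ $\hat{x}$, this pointwise convergence holds $\hat{m}''$-almost everywhere. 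By the dominated convergence theorem and invariance of $\hat{m}''$,
\[
\int \Phi \, d\hat{m}'' = \int \Phi \circ \hat{F}_{\hat{A}}^n \, d\hat{m}'' \longrightarrow 0,
\]
so $\int \Phi \, d\hat{m}'' = 0$. Since $\Phi \geq 0$ vanishes exactly on the graph of $E^{u,\hat A}$, the measure $\hat{m}''$ is supported on that graph, hence $\hat{m}''_{\hat{x}} = (1-\alpha)\,\delta_{E^{u,\hat{A}}_{\hat{x}}}$ $\hat{\mu}$-a.e., i.e., $\hat{m}'' = (1-\alpha)\hat{m}^u$. Setting $\beta = 1-\alpha$ gives $\hat{m} = \alpha \hat{m}^s + \beta \hat{m}^u$ as desired.

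The main obstacle is justifying the pointwise convergence $\Phi \circ \hat{F}_{\hat{A}}^n \to 0$ on a full $\hat m''$-measure set; this requires knowing that $\hat m''_{\hat x}$ charges only vectors transverse to $E^{s,\hat A}_{\hat x}$, which is precisely what the preliminary atomic-decomposition step ensures.
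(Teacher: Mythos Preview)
Your proof is correct and follows essentially the same approach as the paper. The paper's one-sentence argument---that compact subsets of $\Pone$ disjoint from $\{E^{s},E^{u}\}$ accumulate on $E^{u}$ under forward iteration, with ergodicity giving constancy of the weights---is exactly the hyperbolic attraction you make precise via the function $\Phi(\hat x,v)=d(v,E^{u,\hat A}_{\hat x})$ and dominated convergence; your preliminary extraction of the atom at $E^{s,\hat A}_{\hat x}$ is a clean way to guarantee that $\Phi\circ\hat F_{\hat A}^{n}\to 0$ on a full $\hat m''$-measure set.
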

Indeed, one only has to note that every compact subset of $\Pone$ disjoint from $\lbrace E^u, E^s\rbrace$ accumulates on $E^u$ in the future and on $E^s$ in the past.  That $\alpha$ and $\beta$ are constant (independent of $\hat x\in \hat \Sigma$) follows from ergodicity.

We  now prove the key characterization of discontinuity points for the extremal  Lyapunov exponents.  The proof is well known but is included here for completeness.  
\begin{proposition}\label{prop: characterization of disc. points}
If $(\hat{A}, \hat \mu)$ is as in \eqref{eq:mainassump}, then every $\hat{F}_{\hat{A}}$-invariant probability measure $\hat{m}$ on $\hat \Sigma\times \Pone$ projecting to  $\hat{\mu}$ is an $su$-state for $\hat{F}_{\hat{A}}$.
\end{proposition}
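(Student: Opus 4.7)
The plan is to use the strict discontinuity \eqref{eq:mainassump} to construct, via weak-$*$ limits of $s$- and $u$-states for the approximating cocycles $\hat A_k$, an $s$-state $\hat m_1$ and a $u$-state $\hat m_2$ for $\hat A$ whose Lyapunov integrals lie strictly inside $(\lambda_-(\hat A,\hat\mu), \lambda_+(\hat A,\hat\mu))$. Claim \ref{lemma:convex combination} will then express each $\hat m_i$ as a \emph{nontrivial} convex combination of $\hat m^s$ and $\hat m^u$, and an atom-matching argument using the Oseledets transversality will force both $\hat m^s$ and $\hat m^u$ to be $su$-states, at which point Claim \ref{lemma:convex combination} finishes.

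First I extract a subsequence (not relabelled) along which $\lambda_+(\hat A_k,\hat\mu_k) \to \ell^+$ with $\ell^+ < \lambda_+(\hat A,\hat\mu)$; this is possible by upper-semicontinuity of $\lambda_+$ together with \eqref{eq:mainassump}. Since we have already reduced to $SL(2,\R)$-cocycles, $\lambda_-(\hat A_k,\hat\mu_k) \to -\ell^+$ and $\ell^+ \in [0, \lambda_+(\hat A,\hat\mu))$. For every $k$ at which $(\hat A_k,\hat\mu_k)$ is hyperbolic I set $\hat m^{s,k} = \int \delta_{(\hat x, E^{s,\hat A_k}_{\hat x})}\,d\hat\mu_k$ and define $\hat m^{u,k}$ symmetrically; a short check using the holonomy equation together with uniform boundedness of the continuous family $H^{s,\hat A_k}$ on the compact space $\Omega^s$ shows $H^{s,\hat A_k}_{\hat x\hat y}(E^{s,\hat A_k}_{\hat x}) = E^{s,\hat A_k}_{\hat y}$, so that $\hat m^{s,k}$ is genuinely an $s$-state (and $\hat m^{u,k}$ a $u$-state) for the chosen holonomies. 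For any non-hyperbolic $k$, Theorem \ref{theorem: invariance principle} guarantees every $\hat F_{\hat A_k}$-invariant measure projecting to $\hat\mu_k$ is already an $su$-state, and any such measure serves as both $\hat m^{s,k}$ and $\hat m^{u,k}$. Birkhoff then yields $\int \Phi_{\hat A_k}\,d\hat m^{s,k} = \lambda_-(\hat A_k,\hat\mu_k)$ and $\int \Phi_{\hat A_k}\,d\hat m^{u,k} = \lambda_+(\hat A_k,\hat\mu_k)$.

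Passing to a further subsequence I extract weak-$*$ limits $\hat m^{s,k} \to \hat m_1$ and $\hat m^{u,k} \to \hat m_2$. By Lemma \ref{states} $\hat m_1$ is an $s$-state and $\hat m_2$ a $u$-state for $\hat A$, both projecting to $\hat\mu$. Uniform convergence $\hat A_k \to \hat A$ combined with the weak-$*$ convergence forces $\int \Phi_{\hat A}\,d\hat m_1 = -\ell^+$ and $\int \Phi_{\hat A}\,d\hat m_2 = \ell^+$. Writing $\hat m_i = \alpha_i \hat m^s + \beta_i \hat m^u$ via Claim \ref{lemma:convex combination} (with $\alpha_i,\beta_i \geq 0$ and $\alpha_i+\beta_i = 1$), and using $\lambda_-(\hat A,\hat\mu) = -\lambda_+(\hat A,\hat\mu)$, I get $(\beta_1-\alpha_1)\lambda_+(\hat A,\hat\mu) = -\ell^+$ and $(\beta_2-\alpha_2)\lambda_+(\hat A,\hat\mu) = \ell^+$. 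The strict inequality $\ell^+ < \lambda_+(\hat A,\hat\mu)$ then forces $\beta_1 > 0$ and $\alpha_2 > 0$; this positivity is precisely where the discontinuity hypothesis is used.

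The final step, which I expect to be the technical heart of the argument, is atom matching in the fiberwise disintegration. The disintegration of $\hat m_1$ along $\Pone$-fibers is $\hat\mu$-a.e.\ equal to $\alpha_1\delta_{E^{s,\hat A}_{\hat x}} + \beta_1\delta_{E^{u,\hat A}_{\hat x}}$, and $s$-invariance requires, for $\hat\mu$-a.e.\ $\hat x$ and every $\hat y \in W^s_{loc}(\hat x)$ in a suitable $s$-saturated full-measure set,
\[
\alpha_1\delta_{H^{s,\hat A}_{\hat x\hat y}(E^{s,\hat A}_{\hat x})} + \beta_1\delta_{H^{s,\hat A}_{\hat x\hat y}(E^{u,\hat A}_{\hat x})} = \alpha_1\delta_{E^{s,\hat A}_{\hat y}} + \beta_1\delta_{E^{u,\hat A}_{\hat y}}.
\]
Since $\hat m^s$ is already an $s$-state, the first atoms match: $H^{s,\hat A}_{\hat x\hat y}(E^{s,\hat A}_{\hat x}) = E^{s,\hat A}_{\hat y}$. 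Transversality $E^{s,\hat A}_{\hat y} \neq E^{u,\hat A}_{\hat y}$ from hyperbolicity together with $\beta_1 > 0$ then forces $H^{s,\hat A}_{\hat x\hat y}(E^{u,\hat A}_{\hat x}) = E^{u,\hat A}_{\hat y}$, i.e.\ $\hat m^u$ is an $s$-state. The symmetric argument applied to $\hat m_2$ shows $\hat m^s$ is a $u$-state. Hence both $\hat m^s$ and $\hat m^u$ are $su$-states, and since $su$-states form a convex class, Claim \ref{lemma:convex combination} implies every $\hat F_{\hat A}$-invariant probability measure projecting to $\hat\mu$ is an $su$-state.
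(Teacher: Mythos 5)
Your proof is correct and follows essentially the same route as the paper: extract weak-$*$ limits of holonomy-invariant states for $\hat A_k$, use Lemma \ref{states} and Claim \ref{lemma:convex combination} to decompose the limit as $\alpha\hat m^s+\beta\hat m^u$, and use the strict inequality from \eqref{eq:mainassump} to force the ``wrong'' coefficient to be nonzero, whence $\hat m^s$ is a $u$-state and $\hat m^u$ is an $s$-state. The only cosmetic difference is that you carry both the $s$- and $u$-state sequences explicitly and phrase the last step as an atom-matching argument, whereas the paper runs the $u$-state sequence and expresses $\hat m^s=\tfrac1\alpha(\hat m-\beta\hat m^u)$ as an affine combination of $u$-states, disposing of the mirror case with a ``similarly''; these are the same computation.
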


\begin{proof} By the upper semi-continuouity of $\lambda_+(\cdot, \cdot)$, passing to a subsequence we may assume  $\lim _{k\rightarrow \infty} \lambda _+({\hat{A}_k}, \hat{\mu}_k)<\lambda _+({\hat{A}}, \hat{\mu})$. 
For each $k$, there 
exists an ergodic,  $\hat{F}_{\hat{A}_k}$-invariant probability measure $\hat{m}_k$, projecting to $\hat \mu_k$, which is a $u$-state for $H^{u,\hat A_k}$, and such that 
\begin{displaymath}
\lambda _+(\hat{A}_k, \hat{\mu}_k)=\int _{\hat \Sigma\times \Pone}\Phi _{\hat A_k} d\hat{m}_k.
\end{displaymath}
Indeed, if 
$\lambda_+({\hat{A}_k},\hat{\mu}_k)\neq 0$ we can take $\hat{m}_k=\int _{\hat{\Sigma}}\delta _{(\hat x,E^{u,\hat{A}_k}_{\hat{x}})} d\hat{\mu}_k(\hat{x})$ as  above. 
If $\lambda_+({\hat{A}_k},\hat{\mu}_k)=0$ then (as $\hat A_k\in SL(2,\R)$) we have $ \lambda_-({\hat{A}_k},\hat{\mu}_k)=0$ and by Theorem \ref{theorem: invariance principle}, any $\hat{F}_{\hat{A}_k}$-invariant probability measure $\hat{m}_k$, projecting to $\hat \mu_k$ is a $su$-state; moreover for any such measure $\int _{\hat \Sigma\times \Pone}\Phi _{\hat A_k} d\hat{m}_k = 0.$

Consequently 
\begin{displaymath}
\lim _{k\rightarrow \infty} \int _{\hat{\Sigma}\times \Pone} \Phi _{\hat A_k} \ d\hat{m}_k < \lambda _+ (\hat{A},\hat{\mu}).
\end{displaymath}
Taking subsequences again, we may assume that $(\hat{m}_k)_k$ converges to a  $\hat{F}_{\hat{A}}$-invariant probability measure $\hat{m}$. By Lemma \ref{states}, $\hat{m}$ is a  $u$-state for $H^{u, \hat A}$.  Now, by Claim \ref{lemma:convex combination},
\begin{displaymath}
\hat{m} =\alpha \hat{m}^s + \beta \hat{m}^u
\end{displaymath}
for some constants $\alpha,\beta \in [0,1]$.  % are constants since $\hat{\mu}$ is ergodic. 
By uniform convergence of $\Phi_{\hat A_k}\to \Phi_{\hat A}$ and    weak-$*$  convergence of $\hat m_k\to \hat m$ we have
\begin{displaymath}
\int _{\hat{\Sigma}\times \Pone} \Phi _{\hat{A}} \ d\hat{m} = \lim _{k\rightarrow \infty} \int _{\hat{\Sigma}\times \Pone} \Phi _{\hat A_k} \ d\hat{m}_k < \lambda _+ (\hat{A},\hat{\mu})
\end{displaymath}
hence  $\hat{m} \neq \hat{m}^u$.  It follows that  $\alpha \neq 0$ and 
$$\hat{m}^s = \tfrac{1}{\alpha}\left(\hat m- \beta \hat{m}^u\right)$$
is a $u$-state for $H^{u,\hat{A}}$.  Similarly, $\hat{m}^u$ is an $s$-state for $H^{s,\hat{A}}$.  In particular, $\hat m^s$ and $\hat m^u$ are $su$-states.  Claim \ref{lemma:convex combination} completes the proof.
\end{proof}

\subsection{Final reductions and standing notation.  }

As  discussed in the proof of Lemma \ref{states}, the family of invariant stable holonomies defines a continuous change of linear coordinates on the fibers $\lbrace x\rbrace\times \Pone$ that makes the cocycle constant along local stable manifolds of $\hat{f}$.  The convergence of the cocycles $\hat{A}_k\to \hat{A}$ is not affected by this coordinate change.  Moreover,  the straightened out cocycles  admit unstable holonomies with the appropriate convergence and have the same Lyapunov exponents.
We assume for the remainder we have straightened out the cocycles in \eqref{eq:mainassump} along their respective stable holonomies.   Following the notation introduced in Section \ref{sec: continuity conditional measures}, let $A, A_k:\Sigma \rightarrow SL(2,\mathbb{R})$ be such that $\hat{A}=A\circ P$ and $\hat{A}_k=A_k\circ P$ where $P\colon \hat \Sigma \to \Sigma $ is the natural projection. 

We assume for the remainder that 
$$\lambda_+(\hat A_k,\hat\mu_k) \not \to \lambda_+(\hat A,\hat\mu)$$
and fix a sequence of ergodic $u$-states $\hat{m}_k$  as in the proof of Proposition \ref{prop: characterization of disc. points}.  We  
assume $\hat{m}_k$ converges to some measure $\hat{m}$.    
From (the proof of) Proposition \ref{prop: characterization of disc. points}, we have that $\hat{m}=\alpha \hat{m}^s + \beta \hat{m}^u$ and, moreover that $\hat{m}^s$  and $\hat{m}^u$ are $su$-states.

 From Proposition \ref{Prop: continuous $su$-states}, it follows that there are continuous functions $\sigma^{s/u}\colon \hat \Sigma \to \Pone$ such that $E^{s/u,\hat{A}}_{\hat{x}} = \sigma^{s/u}(\hat{x}).$  
Using $\sigma^{s/u}$, we perform a final continuous change of coordinates, that is projective in each fiber, such that for $\hat x\in \hat \Sigma$ $$\sigma^{s}(\hat x ) = [1:0]:= q, \text{ and }\sigma^{u}(\hat x ) = [0:1]:=p.$$  
In particular, after this coordinate change the projective 
 cocycle $\P \hat A(y)$ leaves $q$ and $p$ invariant for every $y$.  Note that the change of coordinate is constant on local stable manifolds so the cocycle $\hat A$ is still of the form $\hat A = A \circ P$ for some $A\colon \Sigma\to \Sl(2,\R)$.  Note that in order to define this coordinate change, we heavily use that the limiting measure $\mu$ is fully supported.  
 
 We take $m_k := (P\times Id)_{\ast}\hat{m}_k$ and similarly take 
$m := (P\times Id)_{\ast}\hat{m}$, $m^s := (P\times Id)_{\ast}\hat{m}^s$, $m^u := (P\times Id)_{\ast}\hat{m}^u$.  Each of the above measures is 
 induced by a $u$-state on $\hat \Sigma\times \Pone$ and hence induces a continuous family of conditional measures.  Since the measures $\hat{m}^{u}_{k}$ are ergodic for each $k$ we conclude that the projected measures $m_{k}$ are ergodic.

Let $\{m^k_x\}$ and $\{m_x\}$ denote a continuous family of conditional measure for $m_k$ and $m$, respectively, given by Proposition \ref{prop: continuity of A mu invariant}. Observe that, for every $x\in \Sigma$, $m_x=\alpha \delta _q +\beta \delta _p$ where $\alpha ,\beta \in (0,1)$.  
We split the proof of Theorem \ref{mainthm} into two cases.  In Section \ref{sec:atomic case} we consider the case that for infinitely many $k$ there is a $x\in \Sigma$ such that 
the conditional measure $m^k_x$ has an  atom.  In Section \ref{sec:non-atomic case}
 we consider the case that the measures $m^k_x$ are non-atomic for every $x$ and infinitely many $k$.   Passing to subsequences, we can assume that either  the measures $m^k_x$ are non-atomic for all $x$ and $k$ or contains an atom for some $x$ and  all $k$. 
 In both cases, we derive a contradiction showing that $(\hat A, \hat \mu)$ can not satisfy
 \eqref{eq:mainassump}.

\section{Case 1: the measures $m^k_x$  are atomic} \label{sec:atomic case} 

In this section we will deduce a contradiction  to \eqref{eq:mainassump} under the assumption that  for every $k\in \N$ there is some $x\in \Sigma$ such that the conditional measure $m^k_x$ contains an atom.  We first claim that $m^k_x$  contains an atom for \emph{every} $x\in \Sigma$ which by ergodicity, implies that the measures $m^k_x$  are all finitely supported.  The proofs of Lemmas \ref{lemma: atoms on the projections} and \ref{lemma: atoms on the conditional measures} given below are not new; to the best of our knowledge they first appear as a consequence of \cite[Lemmas 5.2, 5.3]{BV}. We reproduce the proofs here for completeness. 

For each $k$, consider
\begin{displaymath}
\gamma ^k_0:=\sup \left\lbrace m^k_x(v) :  x\in \Sigma, v\in \Pone\right\rbrace .
\end{displaymath}
By hypothesis, $\gamma ^k_0>0$ for all $k$.  

\begin{lemma}\label{lemma: atoms on the projections}
For each $x\in \Sigma$ there exists $v^k_x\in \Pone$ such that $m^k_x(v^k_x)=\gamma ^k_0$. Moreover, $m^k_x(v^k_x)=\gamma ^k_0$ if and only if $m^k_y(A_k(y)^{-1}(v^k_x))=\gamma ^k_0$ for all $y\in f^{-1}(x)$.
\end{lemma}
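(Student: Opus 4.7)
The plan is to combine three ingredients: (i) the pointwise invariance relation satisfied by the continuous disintegration $\{m^k_x\}_{x \in \Sigma}$; (ii) an upper-semicontinuity property of atoms with respect to weak-$*$ convergence; and (iii) topological transitivity of $f$.

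By Remark \ref{remark: invariant measure}, the continuous family $\{m^k_x\}_{x \in \Sigma}$ satisfies, \emph{for every} $x \in \Sigma$,
\[
m^k_x = \sum_{y \in f^{-1}(x)} \frac{1}{J_{\mu_k} f(y)} (A_k(y))_* m^k_y.
\]
First I would record the identity $\sum_{y \in f^{-1}(x)} \frac{1}{J_{\mu_k} f(y)} = 1$; this can be seen directly from the explicit formula $\frac{1}{J_{\mu_k^u} f_u(y)} = \int_{\{z_{-1} = i\}} \psi_k(f_u(y), z)\, d\mu^s_k(z)$ of Lemma \ref{lemma: Jacobians}, summing over the admissible first symbols $i$ and using that $\int \psi_k \, d\mu^s_k = 1$ on each local stable set (as observed in the proof of Lemma \ref{lemma:continuity mu}). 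Equivalently, one may apply the invariance identity to the obviously $F_{Id}$-invariant measure $\mu_k \times \delta_{v_0}$. Evaluating the invariance identity at the singleton $\{v\}$ thus expresses $m^k_x(\{v\})$ as a convex combination of the values $m^k_y(\{A_k(y)^{-1}(v)\})$ with $y \in f^{-1}(x)$.

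To locate one point where the supremum $\gamma^k_0$ is attained, choose a sequence $(x_n, v_n) \in \Sigma \times \Pone$ with $m^k_{x_n}(\{v_n\}) \to \gamma^k_0$ and, by compactness of $\Sigma \times \Pone$, pass to a subsequence with $(x_n, v_n) \to (x_\ast, v_\ast)$. By Proposition \ref{prop: continuity of Ak mu invariant}, $m^k_{x_n} \to m^k_{x_\ast}$ in the weak-$*$ topology; combined with $v_n \to v_\ast$, the Portmanteau theorem applied to closed balls around $v_\ast$ (and $\epsilon \to 0$) gives $\limsup_n m^k_{x_n}(\{v_n\}) \le m^k_{x_\ast}(\{v_\ast\})$, so $m^k_{x_\ast}(\{v_\ast\}) = \gamma^k_0$. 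Applying the invariance identity at $(x_\ast, v_\ast)$, the right-hand side is a convex combination of numbers bounded above by $\gamma^k_0$ whose value equals $\gamma^k_0$, forcing $m^k_y(\{A_k(y)^{-1}(v_\ast)\}) = \gamma^k_0$ for every $y \in f^{-1}(x_\ast)$. The same argument run at any $x$ with $m^k_x(\{v^k_x\}) = \gamma^k_0$ yields the ``only if'' direction of the second assertion, while the ``if'' direction is immediate: if $m^k_y(\{A_k(y)^{-1}(v^k_x)\}) = \gamma^k_0$ for every $y \in f^{-1}(x)$, then summing against the weights $\tfrac{1}{J_{\mu_k} f(y)}$ yields $m^k_x(\{v^k_x\}) = \gamma^k_0$.

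It remains to extend the existence of $v^k_x$ from $x_\ast$ to arbitrary $x \in \Sigma$. The ``only if'' step shows that the set $S_k \subset \Sigma$ of points admitting an atom of mass $\gamma^k_0$ is $f^{-1}$-invariant and contains $x_\ast$; hence $S_k$ contains the entire backward orbit of $x_\ast$, which is dense in $\Sigma$ by topological transitivity of $f$. The upper-semicontinuity of atoms along weak-$*$ convergent sequences (applied exactly as in the previous paragraph) then propagates the property to every $x \in \Sigma$. The main technical point — and the only place where some care is required — is this upper-semicontinuity step, which is precisely what lets one pass from ``dense'' to ``every''; the rest is bookkeeping with the convex combination coming from the invariance identity.
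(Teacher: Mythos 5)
Your proof is correct and follows essentially the same route as the paper: both locate a maximizing pair $(x_\ast,v_\ast)$ by compactness and upper semicontinuity of atom masses under weak-$*$ convergence, both deduce the ``iff'' from the pointwise invariance identity plus $\sum_{y\in f^{-1}(x)} 1/J_{\mu_k}f(y)=1$, and both conclude by observing that the set of points carrying a $\gamma^k_0$-atom is closed, nonempty, backwards-invariant, and therefore all of $\Sigma$ by transitivity. You merely unpack the final step (dense backward orbit plus closedness) and supply a justification for the Jacobian summing to one that the paper leaves implicit; the substance is identical.
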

\begin{proof}
Consider $\Gamma ^k_0:=\lbrace x\in \Sigma : m^k_x(v)=\gamma ^k_0 \; \textrm{for some} \; v\in \Pone\rbrace$. We argue that this is a non-empty closed set. Indeed, let $\lbrace x_j \rbrace _{j\in \mathbb{N}} \subset \Sigma$ and $\lbrace v_j\rbrace _{j\in \mathbb{N}}\subset \Pone$ be sequences such that $m^k_{x_j}(v_j)\xrightarrow{j\rightarrow \infty} \gamma ^k_0$. Restricting to a subsequence we may assume that $\lbrace x_j \rbrace _{j\in \mathbb{N}}$ converges to some $x\in \Sigma$ and $\lbrace v_j\rbrace _{j\in \mathbb{N}}$ converges to some $v\in \Pone$. Now, as $x\mapsto m^k_x$ is continuous, for each $\varepsilon >0$ we have that
\begin{displaymath}
\gamma ^k_0\leq \limsup _{j\rightarrow \infty} m^k_{x_j}(\overline{B(v,\varepsilon)})\leq m^k_x(\overline{B(v,\varepsilon)}).
\end{displaymath}
Thus   $m^k_x(v)\le  \gamma ^k_0$  and hence $m^k_x(v)=  \gamma ^k_0$. It follows that  $\Gamma ^k_0$ is non-empty and closed.

By Remark \ref{remark: invariant measure},
$$m^k_x(v)=\sum _{y\in f^{-1}(x)} \frac{1}{J_{\mu_k}f(y)}m^k_y(A_k(y)^{-1}(v))$$
for all $x\in \Sigma$ and $v\in \Pone$. As $\sum _{y\in f^{-1}(x)} \frac{1}{J_{\mu_k}f(y)}=1$ for every $x\in \Sigma$, it follows that $m^k_x(v)=\gamma ^k_0 $ if and only if $m_y(A_k(y)^{-1}(v))=\gamma ^k_0$ for every $y\in f^{-1}(x)$.  In particular, $f^{-1}( \Gamma ^k_0)\subset \Gamma^k_0$. Since $f$ is transitive, $\Sigma$ is the unique  non-empty, closed, backwards-invariant subset of $\Sigma$.   Hence  $\Gamma ^k_0=\Sigma$. 
\end{proof}

We show that points realizing the maximal atomic mass of $m^k_x$ have the same property for the  measure $\hat m_k$.    
\begin{lemma}\label{lemma: atoms on the conditional measures}
Given $x\in \Sigma$ and $v\in \Pone$ we have that $\hat{m}^k_{\hat{x}}(v)\leq \gamma ^k_0$ for $\hat{\mu}^k_x$ almost every $\hat{x}\in W^s_{\loc}(x)$. Consequently, $m^k_x(v^k_x)=\gamma ^k_0 $ if and only if $ \hat{m}^k_{\hat{x}}(v^k_x)=\gamma ^k_0$ for $\hat{\mu}^k_x$ almost every $\hat{x}\in W^s_{\loc}(x)$. 
\end{lemma}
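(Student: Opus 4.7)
The second assertion is an immediate consequence of the first combined with the disintegration identity
\[
m^k_x(\{v^k_x\})=\int_{W^s_{\loc}(x)}\hat m^k_{\hat x}(\{v^k_x\})\,d\hat\mu^k_x(\hat x);
\]
once we know $\hat m^k_{\hat x}(\{v^k_x\})\le \gamma^k_0$ for $\hat\mu^k_x$-a.e.\ $\hat x$, the average of this non-negative function equals $\gamma^k_0$ precisely when the integrand equals $\gamma^k_0$ a.e., and the reverse implication is immediate.

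For the main assertion, my plan is to exploit the $\hat F_{\hat A_k}$-ergodicity of $\hat m_k$ through the Borel function $\phi(\hat x,v):=\hat m^k_{\hat x}(\{v\})$ on $\hat\Sigma\times\Pone$. After the straightening of Section \ref{sec: proof of the main theorem}, the cocycle $\hat A_k$ is constant on local stable leaves, i.e.\ $\hat A_k=A_k\circ P$. The $\hat F_{\hat A_k}$-invariance of $\hat m_k$ then gives $\hat m^k_{\hat f(\hat x)}=(\P A_k(P(\hat x)))_*\hat m^k_{\hat x}$ for $\hat\mu_k$-a.e.\ $\hat x$, and since $\P A_k(x)$ is a projective bijection preserving atomic masses, $\phi\circ\hat F_{\hat A_k}=\phi$ $\hat m_k$-a.e. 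Ergodicity of $\hat m_k$ therefore pins $\phi$ down to a single constant value $C\ge 0$ on a set of full $\hat m_k$-measure.

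If $C=0$ then for $\hat\mu_k$-a.e.\ $\hat x$ the measure $\hat m^k_{\hat x}$ assigns zero mass to each of its atoms, hence is atomless; in particular $\hat m^k_{\hat x}(\{v\})=0\le\gamma^k_0$ for every $v$, and disintegrating $\hat\mu_k$ over local stable leaves yields the desired bound for $\mu^u_k$-a.e.\ $x$, and then for every $x$ after modifying the disintegration on a null set. If $C>0$ the same reasoning shows that, for $\hat\mu_k$-a.e.\ $\hat x$, $\hat m^k_{\hat x}$ is purely atomic with exactly $N_0=1/C$ atoms of equal mass $C$, and one must still show $C\le\gamma^k_0$. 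I plan to argue by contradiction: if $C>\gamma^k_0$, the finite atom set $T_{\hat x}:=\{v:\hat m^k_{\hat x}(\{v\})=C\}$ is equivariant under $\hat f$ via $\P A_k$ and, since $\hat m_k$ is a $u$-state, under unstable holonomies. The probability measure on $\hat\Sigma\times\Pone$ with conditionals $\tfrac{1}{N_0}\sum_{v\in T_{\hat x}}\delta_v$ is therefore itself a $u$-state projecting to $\hat\mu_k$, so Proposition \ref{prop: continuity of A mu invariant} furnishes a continuous disintegration of its projection to $\Sigma\times\Pone$. From this continuous family of conditionals I would extract an atom of $m^k_x$ of mass at least $C>\gamma^k_0$, contradicting the definition of $\gamma^k_0$.

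The main obstacle is precisely the last step in the atomic case: producing the promised atom of $m^k_x$ from the $u$-state structure of the auxiliary atomic measure. This forces one to use the continuous dependence of Proposition \ref{prop: continuity of A mu invariant} together with the equivariance of $T_{\hat x}$ to show that the atoms must align on a positive $\hat\mu^k_x$-measure subset of some local stable leaf, so that the mass-$C$ atoms of $\hat m^k$ produce a genuine atom of $m^k_x$ of mass at least $C$ via the formula \eqref{eq:thisgoose}.
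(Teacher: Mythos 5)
Your second bullet (the \emph{consequently} part) is handled correctly via the disintegration identity \eqref{eq:thisgoose}. Your ergodicity step is also sound: $\phi(\hat x,v)=\hat m^k_{\hat x}(\{v\})$ is $\hat F_{\hat A_k}$-invariant $\hat m_k$-a.e.\ (because the straightened cocycle is constant on stable leaves and the maps $\P A_k(x)$ are projective bijections), so $\phi$ is $\hat m_k$-a.e.\ equal to a constant $C$, and when $C>0$ the conditionals $\hat m^k_{\hat x}$ are $\hat\mu_k$-a.e.\ uniform on $1/C$ points. This is a genuinely different framing from the paper, which does not bother to establish the purely-atomic structure at all.

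However, the crucial step --- showing $C\le\gamma^k_0$ --- has a real gap, as you yourself flag. The auxiliary measure you define, $\hat\sigma_{\hat x}=\tfrac1{N_0}\sum_{v\in T_{\hat x}}\delta_v$ with $N_0=1/C$, is actually \emph{equal} to $\hat m_k$ in the atomic case, so applying Proposition \ref{prop: continuity of A mu invariant} to it produces nothing beyond the continuous disintegration $\{m^k_x\}$ you already have; it does not force the atoms $T_{\hat x}$ to line up on a positive $\hat\mu^k_x$-measure subset of a stable leaf. Continuity of $x\mapsto m^k_x$ in the weak-$*$ topology is compatible with the locations $T_{\hat x}$ varying wildly across a stable leaf, in which case the averaged conditional $m^k_x$ would have atoms of mass strictly less than $C$ and no contradiction results. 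The alignment is precisely what needs a new idea, and neither the $u$-state structure alone nor Proposition \ref{prop: continuity of A mu invariant} supplies it.

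The paper closes this gap directly, without the ergodicity reduction, by a Lebesgue-density argument along the stable leaf: suppose for contradiction $Z=\{\hat x\in W^s_{\loc}(x):\hat m^k_{\hat x}(v)\ge\gamma_1\}$ has positive $\hat\mu^k_x$-measure for some fixed $v$ and $\gamma_1>\gamma^k_0$. Because the partition $\{\hat f^n(W^s_{\loc}(y)):y\in f^{-n}(x)\}$ of $W^s_{\loc}(x)$ shrinks to zero diameter, regularity yields $n$ and $y\in f^{-n}(x)$ with $\hat\mu^k_x(Z\cap\hat f^n(W^s_{\loc}(y)))>(1-\varepsilon)\hat\mu^k_x(\hat f^n(W^s_{\loc}(y)))$. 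Pulling back by $\hat f^{-n}$, using that the cocycle is constant on stable leaves and the Jacobian identity of Remark \ref{remark: property of mu}, one finds $m^k_y(A^n_k(y)^{-1}(v))\ge(1-\varepsilon)\gamma_1>\gamma^k_0$, contradicting the definition of $\gamma^k_0$. This is the ``aligning'' mechanism your plan lacks: rather than asking the atoms to coincide globally on a leaf, one zooms into a sub-leaf where $Z$ has density close to $1$, so that the single direction $A^n_k(y)^{-1}(v)$ is an atom of mass close to $\gamma_1$ for the corresponding conditional. If you want to keep your ergodicity framing, you would still need to import essentially this density argument to obtain $C\le\gamma^k_0$.
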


\begin{proof}
Suppose that there exist $v\in \Pone$, $x\in \Sigma$, $\gamma _1 >\gamma ^k_0$ and a subset $Z\subset W^s_{\loc}(x)$ with positive $\hat{\mu}^k_x$-measure  such that $\hat{m}^k_{\hat{x}}(v)\geq \gamma _1$ for every $\hat{x}\in Z$. For any $n\geq 0$ let us consider the partition of $W^s_{\loc}(x)$ given by
\begin{displaymath}
\lbrace \hat{f}^n(W^s_{\loc}(y)):  y\in f^{-n}(x)\rbrace.
\end{displaymath}
Observe that the diameter of this partition goes to zero when $n$ goes to infinity.
Therefore, by the regularity of the measures $\hat{\mu}^k_x$, given $\varepsilon >0$ we can find $n\geq 1$ and $y\in f^{-n}(x)$ such that 
\begin{equation}\label{eq: measure of Z intersected with elements of the partition}
\hat{\mu}^k_x(Z\cap \hat{f}^n(W^s_{loc}(y))) > (1-\varepsilon )\hat{\mu}^k_x(\hat{f}^n(W^s_{loc}(y))).
\end{equation}
Indeed, take a closed  $F\subset W^s_{loc}(y)$ and an open  $A\subset W^s_{loc}(y)$ with $F\subset Z \subset A$ and $\hat{\mu}^k_x(F)>(1-\varepsilon)\hat{\mu}^k_x (A)$. 
Choosing $n$ sufficiently large such that the diameter elements of the partition $\lbrace \hat{f}^n(W^s_{loc}(y)):  y\in f^{-n}(x)\rbrace$ are smaller than $\textrm{dist} (F, A^c)>0$, we have  for every $y\in f^{-n}(x)$ such that $ \hat{f}^n(W^s_{loc}(y)) \cap F \neq \emptyset$ that  $ \hat{f}^n(W^s_{loc}(y)) \subset A$. If \eqref{eq: measure of Z intersected with elements of the partition} fails  then, with $G= \{  y\in f^{-n}(x) : \hat{f}^n(W^s_{loc}(y)) \cap F \neq \emptyset\}$ we have 
\begin{displaymath}
\begin{split}
\hat{\mu}^k_x(F)&=  \sum _{y\in G}\hat{\mu}^k_x ( \hat{f}^n(W^s_{loc}(y)) \cap F ) \leq \sum_{y\in G} \hat{\mu}^k_x ( \hat{f}^n(W^s_{loc}(y)) \cap Z ) \\
&\leq  (1-\varepsilon) \sum _{y\in G}\hat{\mu}^k_x ( \hat{f}^n(W^s_{loc}(y)) ) \leq (1-\varepsilon)\hat{\mu}^k_x (A).
\end{split}
\end{displaymath}

Take $\varepsilon >0$ so  that $(1-\varepsilon)\gamma _1 >\gamma ^k_0$. As  $\hat{m}_k$ is an $F_{\hat{A}_k}$-invariant measure we have
\begin{displaymath}
\hat{m}^k_{\hat{y}}(A^n_k(y)^{-1}(v))=A^n_k(y)_{\ast}\hat{m}^k_{\hat{y}}(v)=\hat{A}^n_k(\hat{y})_{\ast}\hat{m}^k_{\hat{y}}(v)=\hat{m}^k_{\hat{f}^n(\hat{y})}(v)\geq \gamma _1
\end{displaymath}
for almost every $\hat{y}\in \hat{f}^{-n}(Z)\cap W^s_{\loc}(y)$. By Remark \ref{remark: property of mu},
\begin{eqnarray*}
\hat{\mu}^k_y(\hat{f}^{-n}(Z)\cap W^s_{\loc}(y))&=& J_{\mu _k}f^n(y)\hat{\mu}^k_x(Z\cap \hat{f}^n(W^s_{\loc}(y)))\\
&\geq & (1-\varepsilon)J_{\mu _k}f^n(y)\hat{\mu}^k_x(\hat{f}^n(W^s_{\loc}(y)))=(1-\varepsilon)
\end{eqnarray*}
and it follows that
\begin{displaymath}
m^k_y(A^n_k(y)^{-1}(v))=\int _{W^s_{\loc}(y)} \hat{m}^k_{\hat{y}}(A^n_k(y)^{-1}(v))d\hat{\mu}^k_y(\hat{y})\geq (1-\varepsilon)\gamma _1 >\gamma ^k_0
\end{displaymath}
contradicting the definition of $\gamma ^k_0$.  
\end{proof}

Note that we have shown that if $m^k_x$ contains an atom for some $x\in \Sigma$, then $m^k_x$ contains an atom of mass $\gamma_0^k$ for every $x\in \Sigma$.  By ergodicity of the measure $m_k$, it follows that $m^k_x$ is finitely supported for every $x\in \Sigma$ and, moreover, that every atom of $m^k_x$ has mass $\gamma_0^k$.  
To derive a contradiction to  \eqref{eq:mainassump}, we further divide the case that $m^k_x$ contains atoms into two subcases.

\subsection{Case A: Positive Lyapunov Exponents}\label{subsec: positive lyap exponent atomic case} Passing to a subsequence, assume that 
 $\lambda _+(\hat{A}_{k},\hat{\mu}_k)>0$ for every $k\in \mathbb{N}$. 

Recall Lemma \ref{lemma: atoms on the projections}.  Given $x\in \Sigma$, let $v^k_x\in \Pone$ be such that such that $m^k_x(v^k_x)=\gamma ^k_0 >0$ for all $x\in \Sigma$. From Lemma \ref{lemma: atoms on the conditional measures} we have   $\hat{m}^k_{\hat{x}}(v^k_x)=\gamma ^k_0$ for $\hat{\mu}_x$ almost every $\hat{x}\in W^s_{\loc}(x)$. But, as we  assume   $\lambda _+(\hat{A}_k,\hat{\mu}_k)>0$, it follows from the definition of  $\hat{m}_k$ (see the proof of Proposition \ref{prop: characterization of disc. points}) that $\hat{m}^k_{\hat{x}}=\delta _{E^{u,k}_{\hat{x}}}$. Consequently, $\gamma ^k_0 =1$ and $v^k_x=E^{u,k}_{\hat{x}}$ for $\hat{\mu}_x$ almost every $\hat{x}\in W^s_{\loc}(x)$.  

It then follows  from 
Lemma \ref{lemma:convergence of mkx to mx} that $m^k_x=\delta _{v^k_x}$ converges to $m_x=\alpha \delta _{q} +\beta \delta _{p}$ for every $x\in \Sigma$.  Since $\alpha ,\beta \in (0,1)$ and $p\neq q$ this gives  a contradiction.

\subsection{Case B: Zero Lyapunov Exponents} \label{subsec: zero lya exponent atomic case} We now suppose $\lambda _+(\hat{A}_{k},\hat{\mu}_k)=0$ for every $k\in \mathbb{N}$.

First note that, as $\lambda _+(\hat{A}_{k},\hat{\mu}_k)=0=\lambda _-(\hat{A}_{k},\hat{\mu}_k)$ for every $k\in \mathbb{N}$, by Theorem \ref{theorem: invariance principle}, each measure $\hat{m}_k$ is an $su$-state.  By Proposition \ref{Prop: continuous $su$-states} we may find an $su$-invariant disintegration into a continuous family of  conditional measures $\{\hat{m}^k_{\hat{x}}\}_{\hat{x}\in \hat{\Sigma}}$. As the stable holonomies are trivial, we have that  $m^k_x= \hat{m}^k_{\hat{x}}=\hat{m}^k_{\hat{z}}$ for every $\hat{x}, \hat{z}\in W^s_{\loc}(x)$ and all $x\in \Sigma$. By Proposition \ref{prop: characterization of disc. points} the same property holds for the disintegrations of  $\hat{m}$ and $m$.  

In particular this allows to identify $m_x^k$ and $m_y^k$ via unstable holonomies, for $x,y$ in the cylinder $[0;i]$ for $1 \leq i \leq \ell$. 
\begin{claim}\label{lemma: my is a push-forward of mx}
For each $1 \leq i \leq \ell$ and every $x,y \in [0;i]$,  $\hat{x}\in W^s_{\loc}(x)$, and $\hat y =W^s_{\loc}(y)\cap W^u_{\loc}(\hat{x})$ we have 
\begin{displaymath}
m_y=\left( H^{u,\hat{A}}_{\hat{x}\hat{y}}\right)_{\ast} m_x \quad \textrm{and} \quad m^k_y=\left( H^{u,k}_{\hat{x}\hat{y}}\right)_{\ast} m^k_x .
\end{displaymath}
\end{claim}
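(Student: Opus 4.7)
The plan is to unpack the definitions in this case carefully and observe that the claim is essentially a direct translation of the $u$-invariance of the (continuous) disintegrations of $\hat m_k$ and $\hat m$ on $\hat \Sigma \times \Pone$ down to $\Sigma\times \Pone$. The point is that in Case B the Lyapunov exponents of $\hat A_k$ vanish, so by the Invariance Principle (Theorem \ref{theorem: invariance principle}) each $\hat m_k$ is an $su$-state for $\hat A_k$; by Proposition \ref{prop: characterization of disc. points} the limit $\hat m$ is an $su$-state for $\hat A$. Proposition \ref{Prop: continuous $su$-states} then provides disintegrations $\{\hat m^k_{\hat x}\}$ and $\{\hat m_{\hat x}\}$ that depend continuously on $\hat x$ and are simultaneously $s$- and $u$-invariant with respect to every pair in $\Omega^s$ (resp.\ $\Omega^u$), not merely on a full measure set.

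First I would use $s$-invariance together with the fact that we have straightened the cocycle along stable holonomies, so $H^{s,\hat A}_{\hat x\hat z}=\operatorname{Id}$ and $H^{s,\hat A_k}_{\hat x\hat z}=\operatorname{Id}$ for all $\hat z\in W^s_{\loc}(\hat x)$. Consequently, $\hat m^k_{\hat x}=\hat m^k_{\hat z}$ and $\hat m_{\hat x}=\hat m_{\hat z}$ whenever $\hat z\in W^s_{\loc}(\hat x)$. Substituting into the fiber integral \eqref{eq:thisgoose}, which defines $m^k_x$ and $m_x$, gives
\begin{displaymath}
m^k_x=\hat m^k_{\hat x},\qquad m_x=\hat m_{\hat x}
\end{displaymath}
for \emph{every} $\hat x$ with $P(\hat x)=x$, since the integrand is constant on $W^s_{\loc}(x)$ and $\hat \mu^k_x$, $\hat \mu_x$ are probability measures.

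Next I would apply $u$-invariance. Fix $x,y\in [0;i]$, $\hat x\in W^s_{\loc}(x)$, and $\hat y:=W^s_{\loc}(y)\cap W^u_{\loc}(\hat x)$; in particular $P(\hat y)=y$. Because $\{\hat m^k_{\hat x}\}$ is continuous in $\hat x$ and $u$-invariant on a full measure, $u$-saturated set, continuity in $\hat x$ (and in the holonomy, which varies continuously on $\Omega^u$) forces $u$-invariance at every pair $(\hat x,\hat y)\in \Omega^u$. Thus
\begin{displaymath}
\bigl(H^{u,k}_{\hat x\hat y}\bigr)_{\ast}\hat m^k_{\hat x}=\hat m^k_{\hat y},\qquad \bigl(H^{u,\hat A}_{\hat x\hat y}\bigr)_{\ast}\hat m_{\hat x}=\hat m_{\hat y}.
\end{displaymath}
Combining with the identifications from the previous step gives
\begin{displaymath}
\bigl(H^{u,k}_{\hat x\hat y}\bigr)_{\ast}m^k_x=m^k_y,\qquad \bigl(H^{u,\hat A}_{\hat x\hat y}\bigr)_{\ast}m_x=m_y,
\end{displaymath}
which is the claim.

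The argument is essentially bookkeeping; the only subtlety is justifying that the $u$-invariance holds for the specific pair $(\hat x,\hat y)$ rather than merely almost everywhere, and that $m^k_x$ (resp.\ $m_x$) equals $\hat m^k_{\hat x}$ (resp.\ $\hat m_{\hat x}$) for \emph{every} $\hat x$ above $x$. Both are immediate consequences of the continuity statement in Proposition \ref{Prop: continuous $su$-states}, combined with the fact that we have straightened the stable holonomies. I do not anticipate a substantive obstacle once these ingredients are invoked.
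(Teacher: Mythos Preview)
Your proposal is correct and mirrors the paper's own reasoning: the paper states this claim immediately after the paragraph establishing (via Theorem \ref{theorem: invariance principle}, Proposition \ref{Prop: continuous $su$-states}, and Proposition \ref{prop: characterization of disc. points}) that the continuous $su$-invariant disintegrations satisfy $m^k_x=\hat m^k_{\hat x}$ and $m_x=\hat m_{\hat x}$ for every $\hat x\in W^s_{\loc}(x)$, and then treats the claim as an immediate consequence of $u$-invariance. Your write-up simply makes this deduction explicit, including the passage from a.e.\ to everywhere via continuity and full support, which is exactly the intended argument.
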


Let $\lbrace V^k_x \rbrace _{x\in \Sigma}$ be the family of finite subsets of $ \Pone$ given by 
\begin{displaymath}
V^k_x:= \lbrace v\in  \Pone:\; m^k_x(v)=\gamma ^k_0 \rbrace .
\end{displaymath}
Note that $m_x^k (V^k_x) = 1$. Moreover, combining Lemma \ref{lemma: atoms on the projections} and the previous claim we have

\begin{claim}\label{lemma: invariance of Vkx}
For $x,y\in \Sigma$, and $k\in \mathbb{N}$,
\begin{enumerate}
\item $\textrm{card}(V^k_x)=\textrm{card}(V^k_y)$,  
\item \label{item:2}$A_k(x)(V^k_x)=V^k_{f(x)}$, and 
\item \label{item:3} $V^k_y=H^{u,k}_{\hat{x}\hat{y}}(V^k_x)$ for any  $\hat{x}\in W^s_{\loc}(x)$ and $\hat{y}\in W^s_{\loc}(y)\cap W^u_{\loc}(\hat{x})$.
\end{enumerate}
\end{claim}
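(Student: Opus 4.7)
My plan is to prove part (3) first (it follows from Claim \ref{lemma: my is a push-forward of mx} by a direct push-forward argument), then use Lemma \ref{lemma: atoms on the projections} to obtain one inclusion of part (2), and finally combine these with ergodicity and full support of $\mu_k$ to establish part (1) and conclude the equality in part (2).

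For part (3), Claim \ref{lemma: my is a push-forward of mx} gives $m^k_y = (H^{u,k}_{\hat x\hat y})_{\ast} m^k_x$, and since $H^{u,k}_{\hat x\hat y}\colon \Pone\to \Pone$ is a bijection, push-forward preserves the mass at each point: the atoms of mass $\gamma^k_0$ in $m^k_x$ are mapped bijectively to the atoms of mass $\gamma^k_0$ in $m^k_y$, yielding $V^k_y = H^{u,k}_{\hat x\hat y}(V^k_x)$. In particular, $n^k(x):=\textrm{card}(V^k_x)$ is constant on each cylinder $[0;i]$. To obtain the inclusion $V^k_{f(x)}\subseteq A_k(x)(V^k_x)$, let $w\in V^k_{f(x)}$ and apply Lemma \ref{lemma: atoms on the projections} at $f(x)$; this gives $m^k_y(A_k(y)^{-1}(w))=\gamma^k_0$ for every $y\in f^{-1}(f(x))$, and the choice $y=x$ yields $A_k(x)^{-1}(w)\in V^k_x$. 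Bijectivity of $A_k(x)$ then shows $n^k\circ f \le n^k$ pointwise on $\Sigma$.

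For part (1), $n^k$ is Borel-measurable and bounded above by $1/\gamma^k_0$. Since $\mu_k$ is $f$-invariant, $\int (n^k - n^k\circ f)\,d\mu_k = 0$, and together with the pointwise inequality $n^k\circ f\le n^k$ this forces $n^k\circ f = n^k$ $\mu_k$-almost everywhere. Ergodicity of $\mu_k$ under $f$ (inherited from that of $\hat\mu_k$ via the factor map $P$) implies $n^k$ equals some constant $N$ on a $\mu_k$-full measure subset of $\Sigma$. Since $\mu_k$ has full support, every cylinder $[0;i]$ meets this subset, and the constancy of $n^k$ on cylinders from the previous paragraph forces $n^k\equiv N$ on $\Sigma$. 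The inclusion $V^k_{f(x)}\subseteq A_k(x)(V^k_x)$ combined with $\textrm{card}(V^k_{f(x)}) = N = \textrm{card}(A_k(x)(V^k_x))$ then upgrades to the desired equality, yielding part (2).

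The main delicate point is promoting $\mu_k$-almost-everywhere constancy of $n^k$ to pointwise constancy; this is exactly where part (3) is essential, since it provides constancy of $n^k$ on each clopen cylinder and so transfers the almost-sure statement to every point of $\Sigma$.
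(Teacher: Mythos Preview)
Your proof is correct. Parts (2) and (3) follow exactly the route the paper has in mind (the paper simply writes ``combining Lemma \ref{lemma: atoms on the projections} and the previous claim''). For part (1), however, you take a longer path than necessary: just before the claim the paper records that $m^k_x(V^k_x)=1$, which was deduced earlier from ergodicity of $m_k$ (every atom of $m^k_x$ has mass exactly $\gamma^k_0$). This immediately gives $\mathrm{card}(V^k_x)=1/\gamma^k_0$ for every $x$, so part (1) is trivial and the inclusion you prove becomes equality at once. Your alternative argument---deriving $n^k\circ f\le n^k$, integrating, invoking ergodicity of $\mu_k$, and then passing from almost-everywhere to everywhere via the cylinder constancy from part (3)---is sound and self-contained, but it reproves in a more roundabout way a fact already available in the text.
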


We now bound the number of  atoms appearing in the measure $m_x^k$.

\begin{lemma}\label{lemma: cardinality of big atoms}
For every $x\in \Sigma$ we have that
$ \textrm{card}(V^k_x)\leq 2$
for $k$ sufficiently large.
\end{lemma}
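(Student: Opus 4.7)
The plan is to proceed by contradiction, supposing that $N_k := \textrm{card}(V^k_x) \geq 3$ for infinitely many $k$. By Claim \ref{lemma: invariance of Vkx}(1) the cardinality $N_k$ is independent of $x$, so after passing to a subsequence we may assume $N_k \geq 3$ for all $k$. The strategy is to use the positivity of $\lambda_+(\hat A, \hat \mu)$ at a well-chosen periodic orbit of $f$, together with the cocycle invariance $A_k(y)V^k_y = V^k_{f(y)}$ from Claim \ref{lemma: invariance of Vkx}(2), to force $A_k^{n^*}(x^*)$ to be a finite-order element of $\Sl(2,\R)$; but its limit $A^{n^*}(x^*)$ will be hyperbolic, yielding the contradiction.

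I expect the main obstacle to be Step 1: locating a periodic point $x^* \in \Sigma$ of some period $n^*$ for $f$ such that $A^{n^*}(x^*)$ is hyperbolic, i.e., $|\mathrm{tr}(A^{n^*}(x^*))| > 2$. This is a standard consequence of $\lambda_+(\hat A, \hat \mu) > 0$, topological transitivity of $f$, and density of periodic points in an SFT: one takes an Oseledets-typical $\hat x_0$ (for which $\|\hat A^n(\hat x_0)\|$ grows like $e^{n\lambda_+(\hat A, \hat \mu)}$ with singular directions aligning with the Oseledets splitting), and uses the specification/shadowing property of the SFT to construct a periodic orbit of $f$ closely shadowing the orbit of $P^u(\hat x_0)$, so that the resulting return matrix $A^{n^*}(x^*)$ inherits, approximately, the hyperbolic behavior of $\hat A^{n^*}(\hat x_0) = A^{n^*}(P^u(\hat x_0))$.

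Given such $x^*$, iterating Claim \ref{lemma: invariance of Vkx}(2) yields $A_k^{n^*}(x^*) V^k_{x^*} = V^k_{f^{n^*}(x^*)} = V^k_{x^*}$, so $A_k^{n^*}(x^*)$ acts as a permutation of the finite set $V^k_{x^*}$. Letting $m_k \leq N_k!$ denote the order of this permutation, $A_k^{n^* m_k}(x^*)$ fixes $V^k_{x^*}$ pointwise. Since $|V^k_{x^*}| = N_k \geq 3$, the projective map induced by $A_k^{n^* m_k}(x^*)$ fixes three distinct points of $\Pone$ and is therefore the identity on $\Pone$; equivalently, $A_k^{n^* m_k}(x^*) = \pm I$ in $\Sl(2,\R)$. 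Hence $A_k^{n^*}(x^*)$ has finite order in $\Sl(2,\R)$, and every finite-order element of $\Sl(2,\R)$ is either $\pm I$ or conjugate to a rotation; in either case $|\mathrm{tr}(A_k^{n^*}(x^*))| \leq 2$ for every $k$.

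Finally, uniform convergence $A_k \to A$ on $\Sigma$ implies $A_k^{n^*}(x^*) \to A^{n^*}(x^*)$; passing to the limit in traces yields $|\mathrm{tr}(A^{n^*}(x^*))| \leq 2$, contradicting the hyperbolicity of $A^{n^*}(x^*)$ chosen in Step 1. This contradiction proves $N_k \leq 2$ for all sufficiently large $k$.
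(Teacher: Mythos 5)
Your proof is correct and rests on the same core idea as the paper's---namely, producing a periodic point at which the return matrix for $A$ is hyperbolic and then ruling out a third atom---but the way you finish the argument is genuinely different, and your first step could be tightened by citing the right result.

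For Step 1, the paper simply invokes Kalinin's theorem \cite{KalininLivThmMat}, which says exactly that Lyapunov exponents of a cocycle over a hyperbolic base with respect to an ergodic measure are approximated by exponents at periodic orbits. That theorem already packages the shadowing/specification argument you sketch, including the nontrivial part (controlling the cocycle, not just the base orbit, along the shadowing periodic orbit). As you note, this is the hard part, and it is cleaner to just cite Kalinin than to re-derive it; your sketch as written does not quite explain why shadowing the base forces the return matrix to be hyperbolic (one needs to control singular directions or use a Pliss-type argument), which is the content of Kalinin's proof.

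For Step 2, the paper argues more directly: once $A^n(x)$ is hyperbolic, openness of the hyperbolicity condition gives $A_k^n(x)$ hyperbolic for $k$ large, and a hyperbolic element of $SL(2,\mathbb{R})$ acting on $\Pone$ has exactly two points with finite orbit (the eigendirections), so any finite $A_k^n(x)$-invariant set has cardinality at most $2$. You instead show that if $\mathrm{card}(V^k_{x^*})\geq 3$ then some power of $A_k^{n^*}(x^*)$ acts as the identity on $\Pone$, hence $A_k^{n^*}(x^*)$ has finite order, hence trace in $[-2,2]$, and then you pass to the limit to contradict hyperbolicity of $A^{n^*}(x^*)$. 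This works, and it is a nice variant: it never uses that $A_k^{n^*}(x^*)$ is itself hyperbolic (only the limit matters), trading the openness observation for a trace bound plus a limit. The paper's route is shorter; yours is slightly more self-contained in the sense that it avoids appealing to any perturbative fact about the $A_k$ beyond uniform convergence.
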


\begin{proof}
As $ \textrm{card}(V^k_x)$ is defined for every $x$ and is moreover constant, it is enough to prove that $\textrm{card}(V^k_x)\leq 2$ for some $x\in \Sigma$.

We claim there is a periodic point $x\in \Sigma$ with period $\ell$ such that $ A^\ell(x):= A(f^{\ell-1}(x)) \dots A(x)$ is hyperbolic.  Indeed, recall that the cocycle $A(x)$ preserves the coordinate axes and is thus of the form $$A(x) =\left(\begin{array}{cc}\gamma(x) & 0 \\0 & \gamma(x)\inv\end{array}\right).$$
If follows that the logarithm of the eigenvalues of $A^\ell(x)$ for any such periodic point $x$ are 
	$$\frac{1}{\ell}\sum_{j= 0}^{\ell-1} \log \gamma ^{\pm 1} (f^j(x)).$$ %\marginal{Is this $\sigma ^{\pm}$ correct?}
If the logarithm of the eigenvalues of $A^\ell(x)$ vanished for every periodic point $x$ then, as measures concentrated on periodic orbits are dense in the set of all $f$-invariant measures, it follows that $\int \log \gamma(x) \ d \mu'(x)=0$ for every $f$-invariant measure $\mu'$.  It follows that the Lyapunov exponents of the cocycle vanish for every $f$-invariant measure $\mu'$ contradicting our assumption on the measure $\mu$.
%In \cite[Theorem 1.4]{KalininLivThmMat}, Kalinin showed that, in our setting, one can approximate Lyapunov exponents of any ergodic $f$-invariant measure by Lyapunov exponents at periodic points. Therefore, as $ \lambda _+(\hat{A}, \hat{\mu})>0>\lambda _-(\hat{A},\hat{\mu}) $, there exists a periodic point $x\in \Sigma$ with period  $n$ such that $\lambda_+(A, x)>0>\lambda _-(A,x)$ where $\lambda_+(A, x)$ and $\lambda_-(A, x)$ are the eigenvalues of the matrix $A^n(x) = A(f^{n-1}(x)) \dots A(x)$. 
The matrix $A^\ell(x)$ is thus hyperbolic for some periodic point $x$ and, as the set of hyperbolic matrices is open, for $k$ sufficiently large $A^\ell_k(x)$ is also hyperbolic. Therefore, as $A^\ell_k(x)(V^k_x)=V^k_x$ and $V^k_x$ is finite, it follows that $\textrm{card}(V^k_x)\leq 2$.  
\end{proof}

Let $V^k_x= \{ v^k_x\}$ or $V^k_x= \{ v^k_x, w^k_x\}$  depending on the cardinality of $V^k_x$.  
As %we assumed the measure $\hat m_k$ inducing $m_k$ was taken to be ergodic, it follows that
 $m_k$ is ergodic,   either
$$m_k = \begin{cases}
	\displaystyle \int \delta_{(x, v^k_x)} \ d \mu_k(x) &  \textrm{card}(V^k_x)=1 \\ 
	\displaystyle\int \tfrac{1}2 \delta_{(x, v^k_x)} +  \tfrac{1}2 \delta_{(x, w^k_x)} \ d \mu_k(x) &  \textrm{card}(V^k_x)=2 .
	\end{cases}$$
	
As before, we write $\Phi_{A_k}\colon \Sigma\times \Pone\to \R$ for
$$\Phi _{A_k} (x,v) =  \log\left(\dfrac{\| A_k(x)(v)\|}{\| v\|}\right).$$
Recalling that the cocycle $\hat A_k$ is constant along local stable manifolds and recalling the definition of the measure $m_k$ we have 
\begin{equation}
\label{lemma: mk representation}
\lambda _+(\hat{A}_k,\hat{\mu}_k)=\int _{\hat \Sigma\times \Pone} \Phi _{\hat A_k} d\hat m_k =\int _{\Sigma\times \Pone} \Phi _{A_k} dm_k.\end{equation} 
In particular 
\begin{equation}\label{eq: exponents as the integral over the family vx wx}
0= \lambda _+(\hat{A}_k,\hat{\mu}_k)= 
\begin{cases}\displaystyle \int  \Phi_{A_k} (x, v_x^k) \ d\mu_k (x) &  \textrm{card}(V^k_x)=1\\
\displaystyle\int  \tfrac{1}2 \Phi_{A_k} (x, v_x^k) + \tfrac 1 2  \Phi_{A_k} (x, w_x^k) \ d\mu_k (x)
% \int _{\Sigma} \log \dfrac{\| A_k(x)(v^k_x)\|}{\| v^k_x \|}d\mu _k (x) \xrightarrow{k\rightarrow \infty} \int _{\Sigma} \log \dfrac{\| A(x)(v_x)\|}{\| v_x \|}d\mu_k (x) 
&  \textrm{card}(V^k_x)=2.\end{cases}
\end{equation}

We now consider two subcases depending on the cardinality of $V^k_x$.  
\subsubsection{$\textrm{card}(V^k_x)=1$ for every $k\in \mathbb{N}$} \label{subsec: card equals to 1}
Passing to a subsequence, suppose that $\textrm{card}(V^k_x) =1$ for every $k\in \mathbb{N}$. For every $x\in \Sigma$, let $V^k_x=\lbrace v^k_x\rbrace$.

Fix $x\in \Sigma$. Up to restricting to a subsequence, we may assume that $v^k_x$ converges to some $v_0$ in $\Pone$. Thus, as $H^{u,k}_{\hat{x}\hat{y}}$ converges to $H^{u,\hat{A}}_{\hat{x}\hat{y}}$ from Claim \ref{lemma: invariance of Vkx}(\ref{item:3}) for $y\in \Sigma$ 
 in the same cylinder as $x$ 
 we have (fixing any $\hat x\in W^s_{\loc}(x)$ and  $\hat{y}\in W^s_{\loc}(y)\cap W^u_{\loc}(\hat{x})$) that 
\begin{displaymath}
v^k_{y}=H^{u,\hat A_k}_{\hat{x}\hat{y}}(v^k_x) \xrightarrow{k\rightarrow \infty} H^{u,\hat{A}}_{\hat{x}\hat{y}}(v_0):=v_y.
\end{displaymath}

This combined with Claim \ref{lemma: invariance of Vkx}(\ref{item:2}) and  \ref{lemma: invariance of Vkx}(\ref{item:3}) implies that $v^k_y$ converges to some $v_y$ in $\Pone$ for every $y\in \Sigma$.  Moreover, 
the family $\left\lbrace v_y\right\rbrace _{y\in \Sigma}$ satisfies
\begin{equation}\label{eq: invariance of v_y by A and holonoimes card 1}
A(y)(v_y)=v_{f(y)} \quad \textrm{and} \quad H^{u,\hat{A}}_{\hat{y}\hat{z}}(v_y)=v_z.
\end{equation}
%Furthermore,
and  the map $y\mapsto v_y$ is continuous.  
As $A(y) v_y = v_{f(y)}$,  the graph of $y\mapsto v_y$ is a closed, $F_A$-invariant subset of $\Sigma\times \P ^1$.  Hence (by an argument similar to the proof of  Claim \ref{lemma:convex combination}),
 the (non-uniform) hyperbolicity of the cocycle $A$ implies that either $v_y = q$ for every $y\in \Sigma$ or $v_y= p$ for every $y$ .  

Suppose first that  $v_0=q$. Then  from \eqref{eq: invariance of v_y by A and holonoimes card 1}
\begin{equation}\label{eq:star}
0=  \int   \Phi_{A_k}(x,  v^k_x )\ d\mu_k  (x) \to \int _{\Sigma} \Phi_{A}(x,  q)\ d\mu (x) = 
 \int \Phi_{ A} \ d  m^s %   \log \dfrac{\| A(x)(E^{s,\hat{A}}_{x})\|}{\| E^{s,\hat{A}}_{x} \|}\ d\mu (x)
 = \lambda _-(\hat{A},\hat{\mu})
\end{equation}
which is a contradiction since  $\lambda _-({\hat{A}},\hat{\mu})<0$. 
Similarly, if  $v_y = p$ for every $y$ then 
 \begin{equation}\label{eq:star2}
0=  \int  \Phi_{A_k}(x,  v^k_x )\ d\mu_k  (x) \to 
\int  \Phi_{A}(x,  p) \ d\mu  (x) = 
\int \Phi_A \ d  m^u % \log \dfrac{\| A(x)(E^{s,\hat{A}}_{x})\|}{\| E^{u,\hat{A}}_{x} \|}\ d\mu (x)
 = \lambda _+(\hat{A},\hat{\mu})
\end{equation} contradicting  that  $\lambda _+({\hat{A}}, \hat{\mu})> 0$.

\subsubsection{$\textrm{card}(V^k_x) =2$ for every $k\in \mathbb{N}$} \label{subsec: card equals to 2}
If $\textrm{card}(V^k_x) =2$ for every $k $ we may take  $V^k_y=\lbrace v^k_y, w^k_y\rbrace$ with 
\begin{equation}\label{eq: vkx and wkx are related with vky and wky by holonomies}
v^k_y=H^{u,k}_{\hat{x}\hat{y}}(v^k_x) \quad \textrm{and} \quad w^k_y=H^{u,k}_{\hat{x}\hat{y}}(w^k_x)
\end{equation}
for every $x$ and $y$
in the same cylinder. Moreover,
\begin{equation}\label{eq: invariance of the family vx and wx}
A_k(y)\left( \left\lbrace v^k_y, w^k_y \right\rbrace \right)= \left\lbrace v^k_{f(y)}, w^k_{f(y)}\right\rbrace
\end{equation}
for every $y\in \Sigma$. 

Fix $x\in \Sigma$. Passing to  subsequences suppose that $v^k_x$ converges to $v_0$ and $w^k_x$ converges to $w_0$ in $\Pone$. Then, by \eqref{eq: vkx and wkx are related with vky and wky by holonomies}, it follows that
\begin{displaymath}
v^k_y= H^{u,k}_{\hat{x}\hat{y}}(v^k_x) \xrightarrow{k\rightarrow \infty} H^{u,\hat{A}}_{\hat{x}\hat{y}}(v_0):=v_y 
\end{displaymath}
and
\begin{displaymath}
w^k_y= H^{u,k}_{\hat{x}\hat{y}}(w^k_x) \xrightarrow{k\rightarrow \infty} H^{u,\hat{A}}_{\hat{x}\hat{y}}(w_0):=w_y 
\end{displaymath}
for every $y$ in the same cylinder as $x$. Invoking \eqref{eq: invariance of the family vx and wx} and \eqref{eq: vkx and wkx are related with vky and wky by holonomies} it follows that   $v^k_y$ converges to some $v_y$ and $w^k_y$ converges to some $w_y$ in $\Pone$ for \emph{every} $y\in \Sigma$.  Moreover,  $y\mapsto v_y$ and $y\mapsto w_y$ are  continuous and
\begin{equation}\label{eq: invariance of v_y and w_y by A card 2}
A(y)\left( \left\lbrace v_y,w_y\right\rbrace\right)=\left\lbrace v_{f(y)},w_{f(y)}\right\rbrace
\end{equation}
and
\begin{equation}\label{eq: invariance of v_y and w_y by holonomies card 2}
v_z= H^{u,\hat{A}}_{\hat{y}\hat{z}}(v_y) \quad \textrm{and} \quad w_z= H^{u,\hat{A}}_{\hat{y}\hat{z}}(w_y)
\end{equation}
for every $y,z\in \Sigma$ in the same cylinder.

Suppose that $v_0=w_0$.  Then as argued above, either $v_y= w_y=p$ for all $y\in \Sigma$ or $v_y= w_y=q$ for all $y\in \Sigma$ and from \eqref{eq: exponents as the integral over the family vx wx}   we arrive at the same contradictions as in \eqref{eq:star} and \eqref{eq:star2}  in  the previous case.

If $v_0\neq w_0$ then (by ergodicity) without loss of generality we may assume $v_y = q$ and $w_y= p$ for all $y\in \Sigma$.  However, as we assumed  $m_k$ to be ergodic, we have that $A_k(y) v_y^k = w_{f(y)}^k$ for some  $y\in \Sigma$.  Then $A(y) v_y = w_{f(y)}$ and hence  $A(y)(p) = q$, a contradiction.

\section{Case 2: the measures \textbf{$m^k_x$} are non-atomic} \label{sec:non-atomic case}

We now derive a contradiction to \eqref{eq:mainassump} under the assumption that the measures  $m^k_x$ are non-atomic for every $x\in \Sigma$ and every $k$.  
To arrive at a contradiction, we introduce the tools of couplings and (additive) energies: $\lbrace m^k_x \rbrace _{x\in \Sigma}$ being non-atomic implies that the family of measures $\lbrace \restrict{m^k_x}{U_x}\rbrace $ obtained from $\lbrace m^k_x \rbrace$ by restriction to a suitable family of sets $\lbrace U_x \rbrace$ admit a family of symmetric self-couplings with finite energy. Taking advantage of the fact that the stable space is a repeller for the action of the cocycle $A$ on $\Pone$, we are able to build a new family of symmetric self-couplings of $\lbrace \restrict{m^k_x}{U_x}\rbrace _{x\in \Sigma}$ with energy strictly smaller by a definite factor coming from the rate of expansion of $A$ at the stable space. We can then iterate this procedure to construct a symmetric self-coupling of  $\lbrace \restrict{m^k_x}{U_x}\rbrace $ with negative energy, arriving at a contradiction. This approach follows the main ideas in \cite[Chapter 10]{V2} though we deviate slightly from  \cite{V2} by using  \emph{additive Margulis functions}  introduced in \cite{AEV}.  The contradiction is given by Proposition \ref{prop:key}.

Recall we changed  coordinates on the cocycle $A$ so that $A(y)$ fixes  $q=[1:0]$ and $p= [0:1]$ for every $y\in \Sigma$. Recall the sequence of measures $m_k$ projecting to $\mu_k$ and admitting continuous family of conditional measures $\{m^k_x\}_{x\in \Sigma}$.  Moreover $m_k$ converges to the measure $\mu\times (\alpha \delta_{q} + \beta \delta_{p})$ and the conditional measures $m^k_x$ converge uniformly to $ \alpha\delta_{q} + \beta \delta_{p}$.
{Recall we assume $\alpha>0$.}  Also, recall we write $\P A\colon \Sigma\to \mathrm{Diff^\infty}(\Pone)$, $x\mapsto \P A(x)$ for the projective cocycle.  Similarly, write $\P A_k$ for the projectivized cocycle of $A_k$.  

\subsection{$q$ is an expanding point}

We begin by recalling that, given $B\in GL(2,\mathbb{R})$ and $v\in \Pone$, the derivative at the point $v$ of the action of $\P B$ in the projective space is given explicitly by 
\begin{equation*}
D_v\P B (\dot{v})=\frac{\textrm{proj} _{B(v)}B(\dot{v})}{\parallel B(v)\parallel }\quad \textrm{for every} \: \dot{v}\in T_{v}\Pone=\lbrace v\rbrace ^{\perp}
\end{equation*}
where $\textrm{proj}_v: w\rightarrow w-v\frac{<w,v>}{<v,v>} $ denotes the orthogonal projection to the hyperplane orthogonal to $v$.

\begin{claim}
For almost every $x\in \Sigma$ we have 
$$\lim_{n\to \infty} \log\left(\left\| D_{q} \P A^n (x) \right \| ^{1/n}\right )= \lambda_+(\hat A, \hat \mu) - \lambda_-(\hat A, \hat \mu) =:c >0.$$
\end{claim}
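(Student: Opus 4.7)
The key observation is that after the coordinate change described just before the claim, the matrix $A(y)$ fixes both lines $q=[1:0]$ and $p=[0:1]$ in $\Pone$, which forces $A(y)$ to be diagonal in the standard basis. Since $A(y)\in \Sl(2,\R)$, I may write
\[
A(y)=\begin{pmatrix} a(y) & 0 \\ 0 & d(y)\end{pmatrix},\qquad a(y)d(y)=1,
\]
with $a,d\colon \Sigma \to \R\setminus\{0\}$ continuous. Consequently $A^n(x)=\mathrm{diag}(a^n(x),d^n(x))$ where $a^n(x)=\prod_{i=0}^{n-1}a(f^i(x))$ and similarly for $d^n(x)$.

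Next I compute the projective derivative at $q$. Using the formula for $D_v\P B$ recalled at the start of the section, with $B=\mathrm{diag}(\alpha,\delta)$, $v=e_1$, and $\dot v=e_2 \in \{e_1\}^\perp$, the projection of $B(e_2)=\delta e_2$ onto $\{Be_1\}^\perp=\{e_1\}^\perp$ is $\delta e_2$ itself, so
\[
\|D_{q}\P B\|=\frac{|\delta|}{|\alpha|}.
\]
Applying this with $B=A^n(x)$ yields
\[
\|D_{q}\P A^n(x)\|=\frac{|d^n(x)|}{|a^n(x)|},\qquad \text{hence}\qquad \tfrac{1}{n}\log\|D_{q}\P A^n(x)\|=\tfrac{1}{n}\log|d^n(x)|-\tfrac{1}{n}\log|a^n(x)|.
\]

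Now I identify the two Birkhoff averages with the Lyapunov exponents. Since $\hat A=A\circ P$ and $q=\sigma^s(\hat x)$, $p=\sigma^u(\hat x)$ are the stable and unstable directions of $\hat A$ at every point, applying $\hat A^n(\hat x)$ to the basis vectors $e_1,e_2$ gives
\[
\lambda_-(\hat A,\hat\mu)=\lim_n \tfrac{1}{n}\log\|\hat A^n(\hat x)e_1\|=\lim_n \tfrac{1}{n}\log|a^n(P(\hat x))|,
\]
\[
\lambda_+(\hat A,\hat\mu)=\lim_n \tfrac{1}{n}\log\|\hat A^n(\hat x)e_2\|=\lim_n \tfrac{1}{n}\log|d^n(P(\hat x))|.
\]
The measure $\mu=P_\ast\hat\mu$ is ergodic for $f$ (as a factor of the ergodic system $(\hat f,\hat\mu)$), so Birkhoff's ergodic theorem applied to the continuous functions $\log|a|,\log|d|\in L^1(\mu)$ shows that for $\mu$-a.e.\ $x\in\Sigma$,
\[
\tfrac{1}{n}\log|a^n(x)|\to\int\log|a|\,d\mu=\lambda_-(\hat A,\hat\mu),\qquad \tfrac{1}{n}\log|d^n(x)|\to \int\log|d|\,d\mu=\lambda_+(\hat A,\hat\mu).
\]
Subtracting gives the desired limit $c=\lambda_+(\hat A,\hat\mu)-\lambda_-(\hat A,\hat\mu)$, which is strictly positive by the standing assumption $\lambda_-(\hat A,\hat\mu)<0<\lambda_+(\hat A,\hat\mu)$ established in Section \ref{sec: discontinuity points}. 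There is no real obstacle here: the entire content is the diagonalization afforded by the normalization $\sigma^s\equiv q$, $\sigma^u\equiv p$; everything else reduces to Birkhoff's theorem.
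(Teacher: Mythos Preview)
Your proof is correct and follows essentially the same approach as the paper. The paper computes $\|D_q \P A^n(y)\|=\|A^n(y)(p)\|/\|A^n(y)(q)\|$ directly from the projective derivative formula (using that $p,q$ are orthogonal and invariant) and then invokes the pointwise ergodic theorem; you make the same computation explicit by writing $A(y)=\mathrm{diag}(a(y),d(y))$ and applying Birkhoff to $\log|a|$ and $\log|d|$ separately.
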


\begin{proof}
Recall $p,q$ are orthogonal and preserved by the cocycle $A$.  

Then for $v\in T_{q} \Pone =\{q\}^{\perp}$ we have $v\in \mathrm{span}{(p)}$.  Thus if  $\|v\|=1$ we have $\|A^n(y)(v)\| = \|A^n(y)(p)\|$.  Projecting back to $T_{q}\Pone$ we have 
$$\|D_{q} \P A^n(y)(v)\| = \dfrac{ \|A^n(y)(p)\|}{\|A^n(y)(q)\|}.$$
The claim then follows from the pointwise ergodic theorem.  
\end{proof}

\begin{claim}We may select $N\in \mathbb{N}$ such that
\begin{displaymath}
\int _{ \Sigma} \log \left(\| D_{q} \mathbb{P}A^N(x)\|\right) d\mu (x)>6.
\end{displaymath}
\end{claim}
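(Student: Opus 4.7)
The plan is to note that, because each projective map $\mathbb{P}A(x)$ fixes $q$, the log-derivative at $q$ is \emph{exactly additive} along orbits of $f$, so the claim reduces to an application of Birkhoff's ergodic theorem combined with the previous claim.

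First I would set $\phi_n(x) = \log\|D_q \mathbb{P}A^n(x)\|$ and observe that, since $T_q\mathbb{P}^1$ is one-dimensional, the derivative $D_q \mathbb{P}A^n(x)$ is a scalar linear map. Applying the chain rule and using that $\mathbb{P}A^n(x)$ sends $q$ to $q$ for every $x$ and every $n$, I obtain
\begin{equation*}
D_q \mathbb{P}A^{n+m}(x) = D_q \mathbb{P}A^{m}(f^n(x)) \circ D_q \mathbb{P}A^{n}(x),
\end{equation*}
so taking norms gives $\phi_{n+m}(x) = \phi_m(f^n(x)) + \phi_n(x)$. In particular $\phi_n$ is the $n$-th Birkhoff sum of $\phi_1$ over $f$.

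Next I would verify that $\phi_1 \in L^1(\mu)$. Since $A\colon \Sigma\to SL(2,\mathbb{R})$ is continuous, $\Sigma$ is compact, and the explicit formula $\|D_q\mathbb{P}A(x)\| = \|A(x)p\|/\|A(x)q\|$ (from the computation in the previous claim) is continuous and positive, $\phi_1$ is a bounded continuous function. Since $\hat{\mu}$ is ergodic for $\hat f$ and $\mu = P^u_*\hat\mu$ is a factor, $\mu$ is ergodic for $f$. By Birkhoff's theorem
\begin{equation*}
\frac{1}{n}\phi_n(x) \to \int_\Sigma \phi_1\,d\mu
\end{equation*}
for $\mu$-a.e.\ $x$. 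Comparing this with the pointwise limit $\tfrac{1}{n}\phi_n(x) \to c$ from the previous claim (where $c = \lambda_+(\hat A,\hat\mu) - \lambda_-(\hat A,\hat\mu) > 0$), I conclude that $\int_\Sigma \phi_1\,d\mu = c$, and hence
\begin{equation*}
\int_\Sigma \log\|D_q\mathbb{P}A^N(x)\|\,d\mu(x) = \int_\Sigma \phi_N\,d\mu = N c.
\end{equation*}
Finally, choosing any integer $N > 6/c$ gives the desired inequality. There is essentially no obstacle here: the only point that must be checked carefully is the exact additivity of $\phi_n$, which is why the fact that $q$ is fixed by every $\mathbb{P}A(x)$ (arranged by the change of coordinates in Section 4 straightening out $E^{s,\hat A}$ and $E^{u,\hat A}$) is crucial—without it one would only have subadditivity and Kingman's theorem, but the conclusion would be the same.
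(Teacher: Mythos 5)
Your proof is correct, and it takes a route that differs from the paper's in one instructive way. The paper observes only that $\tfrac1n\log\|D_q\mathbb{P}A^n(x)\|$ is uniformly bounded in $x$ and $n$, and then passes the a.e.\ pointwise limit $c>0$ of the previous claim through the dominated convergence theorem to conclude $\tfrac1n\int \log\|D_q\mathbb{P}A^n\|\,d\mu \to c$; picking a large $N$ then finishes. You instead exploit the fact that $q$ is fixed by every $\mathbb{P}A(x)$, making $\phi_n(x)=\log\|D_q\mathbb{P}A^n(x)\|$ an exactly additive cocycle over $f$, so that Birkhoff plus $f$-invariance of $\mu$ gives the closed form $\int \phi_N\,d\mu = N\int\phi_1\,d\mu = Nc$. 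Both arguments are valid; yours yields the sharper identity $\int\phi_N\,d\mu = Nc$ and avoids DCT, while the paper's is slightly more robust in that it would survive with only subadditivity (via Kingman) if $q$ were not exactly fixed. One small point worth making explicit in your write-up: $\mu=\mu^u=P^u_*\hat\mu$ is indeed $f_u$-invariant and ergodic because $P^u$ intertwines $\hat f$ and $f_u$ and a factor of an ergodic system is ergodic; you state this but could cite the semiconjugacy $P^u\circ\hat f = f_u\circ P^u$ to justify invariance.
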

\begin{proof}
We have $\lim_{n\to \infty} \frac 1 n \log\left(\| D_{q} \P A^n(x) \| \right)\to c>0$ almost everywhere. Moreover, as $\frac 1 n \log\left(\| D_{q} \P A^n(x)  \|\right) $ is bounded above and below uniformly in $x$ and $n$, by dominated convergence   we have	
$$\lim_{n\to \infty} \frac 1 n \int \log\left(\| D_{q} \P A^n(x) \|\right) d \mu(x) \to c>0.\qedhere $$ \end{proof}

Fix such an $N$ for the remainder.  
We define 
$$
{\kappa (x):= \log \left(\| D_{q} \mathbb{P}A^N(x)\|\right).} 
$$
As $\kappa\colon \Sigma\to \R$ is a continuous function, for all sufficiently large $k$ we have \begin{equation}\label{eq:kappa} \int \kappa (x) \ d \mu_k (x) > 4.\end{equation}

\subsection{Couplings and energy} Let $d$ be the distance on $\Pone$ defined by the angle between two directions.    We assume $d$ is normalized so that $\Pone$ has diameter 1.  

Consider  a Borel probability measure $\mu'$ on $\Sigma$ and a $\mu'$-measurable  family $\{\nu_x\}_{x\in \Sigma}$ of  finite
Borel measures on $\Pone$.  The measures $\nu_x$ are not assumed to be probabilities nor are they assumed to have the same mass.  For $j\in \{1,2\}$, let $\pi _j :\Pone\times \Pone\to  \Pone$ be the projection on the $j$-th factor.  For $x\in \Sigma$, let $\xi_x$ be a measure on $\Pone \times \Pone$. We say a parameterized family of measures $\{\xi_x\}_{x\in \Sigma}$ on $\Pone \times \Pone$ is a (measurable) \emph{family of   symmetric  self-couplings} of  $\{\nu_x\}_{x\in \Sigma}$ if 
 \begin{enumerate}
\item   $x\mapsto \xi_x$ is $\mu'$-measurable,
  \item $(\pi_j)_*\xi_x = \nu_x$ for $j\in\{1,2\}$,  and  
  \item $\iota_*\xi_x = \xi_x$ where $\iota\colon \Pone\times \Pone \to \Pone \times \Pone$ is the involution $\iota\colon (u,v)\rightarrow (v,u)$.
  \end{enumerate} 
 We note that we always have one family of symmetric self-couplings constructed by taking for every $x$ the measure
 $$\xi_x =\frac{1}{\| \nu_x \|}\nu _x \times \nu _x$$ for all $x$ with $\|\nu_x\|\neq 0$ 
 where $\| \nu_x \|:= \nu_x (\Pone)$ denotes the mass of the measure $\nu_x $.

We define a function $\varphi \colon \Pone \times \Pone \to \R$ by 
\begin{equation}\label{eq:phidef}
\phi (u,v)=- \log d(u,v).
\end{equation}
Note that $\phi$ is non-negative. In the language of \cite{AEV}, the function  $\phi$ is an \emph{additive Margulis function} and its  properties will be used to deduce the contradiction in Proposition \ref{prop:key} below. For    a {family of   symmetric  self-couplings} $\{\xi_x\}_{x\in \Sigma}$ of  $\{\nu_x\}_{x\in \Sigma}$ as above we define the (additive) \textit{energy} of  $\{\xi_x\}_{x\in \Sigma}$ to be
\begin{displaymath}
\int _{\Sigma} \int_{ \Pone\times \Pone} \phi  (u,v) \  d\xi_x(u,v)  d \mu'(x).
\end{displaymath}

\subsection{Choice of parameters}  
To establish a contradiction to \eqref{eq:mainassump} we select a number of parameters that will be fixed for the remainder.   Recall the $N$ fixed above and the function $\kappa$.  
\begin{enumerate}
\item Let $U_0 \subset \Pone$ be an open ball centered at $q$ with  $p\not \in \overline{U}_0$. 
\item Let $U_1\subset \overline{U_1}\subset U_0$ be an open neighborhood of $q$ such that for every $x\in \Sigma$ and every sufficiently large $k$ we have 
\begin{enumerate}
\item  $\P A_k^N(x)(\overline{U_1})\subset U_0$; 
\item $\overline{U_1}\subset \P A_k^N(x)(U_0)$;
\item  for every $u,v\in U_1$ 
\begin{equation} \label{eq: estimative 1}
d(\P A^N_k(x)(u),\P A^N_k(x)(v))\geq  e^{-\alpha} e ^{\kappa (x)}d(u,v).
\end{equation}
\end{enumerate}
\end{enumerate}
From \eqref{eq: estimative 1} it follows  that for every $u,v\in U_1$, $x\in  \Sigma$ and $k$ sufficiently large
\begin{equation}\label{eq: estimative 2}
\varphi(\P A^N_k(x)(u),\P A^N_k(x)(v))\leq  \varphi(u,v) -\kappa (x)  + \alpha.
\end{equation}

\begin{enumerate}[resume]
\item Fix $ q\in U_4\subset \overline{U_4}\subset U_3\subset \overline{U_3}\subset U_2\subset \overline{U_2}\subset U_1$ such that
\begin{enumerate}
\item  each $U_j$ is an open set;
\item  $\P A^N_k(y)(\overline{U}_4)\subset U_3$ for every $y\in \Sigma$ and $k$ sufficiently large.
\end{enumerate}

\item By compactness of $\Sigma$ and uniform convergence of $A_k$ to $A$, we may select $M_1>1$ so that for all $x\in \Sigma, u\in \Pone$ and $k$ sufficiently large,
$$-M_1 < \log(\|D_u\P A_k^N(x)\| )< M_1.$$
Note in particular that $|\kappa(x)|\le M_1$.  
\item Take $M_2>1$ to be the maximum of  
$$\sup \lbrace \varphi (u,v): u\in U_3, v\in U_2^c\rbrace \text{ and } 
%\item [] 
\sup \lbrace \varphi (u,v): u\in U_2, v\in U_1^c\rbrace.$$
\item  Fix $0<\delta <1-\alpha$ with  $100\delta M_1 M_2 < \alpha$. 
\item For $k$ sufficiently large, we   have for every $x\in \Sigma$ that 
\begin{enumerate}
	\item $m_x^k(U_4)>\alpha - \delta ;$
	\item $m_x^k(U_0)<\alpha + \delta. $
\end{enumerate}
\item For the remainder,  fix $k$ sufficiently large so that all  estimates above (including \eqref{eq:kappa}) hold.  
\item Given our $k$ fixed above  define $\rho\colon \Sigma\to [0,1)$ so that $$m_x^k(B(q, \rho(x)))= \alpha + \delta.$$
\end{enumerate}
Observe that as $m_x^k$ is assumed to have  no atoms and as the measures $m_x^k$ vary continuously in $x$, the function $\rho$ is continuous.  We write $$U_x:= B(q, \rho(x)).$$
Note that the choices above ensure that %Moreover, we may suppose that $k$ is sufficiently large so that 
$U_0\subset U_x$.

\subsection{Constructing  finite energy  families of symmetric self-couplings} 
For the remainder of this section we work exclusively with the $k$ fixed above.  
We will work primarily with the family of measures $\{\restrict {m^k_x}{U_x}\}$. Recall  that the measure $\restrict{m^k_x}{U_x}$ is defined for {every} $x\in \Sigma$.   Moreover, the dependence on $x$ is continuous. Below, we will define a number of families of symmetric self-couplings $\{\xi_x\}_{x\in \Sigma}$ of $\{\restrict{m^k_x}{U_x}\}_{x\in \Sigma}$.  For every such family $\{\xi_x\}_{x\in \Sigma}$, the measure $\xi_x$ will be defined for  every $x\in \Sigma$. We start constructing a family of symmetric self-couplings of $\{\restrict{m^k_x}{U_x}\}_{x\in \Sigma}$ with finite energy.

From the continuity and non-atomicity of the conditional measures $m_x^k$ we obtain 
\begin{claim}
There is an $r>0$ so that for every $x\in \Sigma$ and $u\in \Pone$
$$m_x^k(B(u,2r))<\frac{ \alpha + \delta}{10}.$$
\end{claim}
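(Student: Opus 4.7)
The plan is to argue by contradiction using the compactness of $\Sigma \times \P^1$, the weak-$*$ continuity of the disintegration $x \mapsto m_x^k$ (established in Proposition \ref{prop: continuity of A mu invariant}), and the standing assumption in this section that every $m_x^k$ is non-atomic. Suppose no such $r$ exists. Then for each $n \geq 1$ we can choose $x_n \in \Sigma$ and $u_n \in \P^1$ with
\[
m_{x_n}^k\bigl(B(u_n, 2/n)\bigr) \geq \frac{\alpha + \delta}{10}.
\]

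By compactness of $\Sigma$ and $\P^1$, pass to subsequences so that $x_n \to x$ and $u_n \to u$. Fix an arbitrary $\epsilon > 0$. Since $u_n \to u$ and $2/n \to 0$, for all sufficiently large $n$ we have $B(u_n, 2/n) \subset \overline{B(u, \epsilon)}$, and therefore
\[
m_{x_n}^k\bigl(\overline{B(u, \epsilon)}\bigr) \geq \frac{\alpha + \delta}{10}.
\]

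By Proposition \ref{prop: continuity of A mu invariant} (applied to the fixed $k$), the conditional measures $m_{x_n}^k$ converge to $m_x^k$ in the weak-$*$ topology. Applying the Portmanteau theorem to the closed set $\overline{B(u, \epsilon)} \subset \P^1$ yields
\[
m_x^k\bigl(\overline{B(u, \epsilon)}\bigr) \geq \limsup_{n \to \infty} m_{x_n}^k\bigl(\overline{B(u, \epsilon)}\bigr) \geq \frac{\alpha + \delta}{10}.
\]
Letting $\epsilon \downarrow 0$, the closed balls $\overline{B(u, \epsilon)}$ decrease to the singleton $\{u\}$, so by continuity of the measure from above,
\[
m_x^k(\{u\}) \geq \frac{\alpha + \delta}{10} > 0,
\]
contradicting the assumption (Case 2) that $m_x^k$ is non-atomic. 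There is no real obstacle here: the only subtlety is making sure to use the \emph{closed} ball $\overline{B(u,\epsilon)}$ in order to apply Portmanteau in the direction giving an upper bound on $\limsup$, rather than the open ball (which would give the wrong inequality).
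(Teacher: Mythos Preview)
Your proof is correct and is exactly the argument the paper has in mind: the paper merely asserts that the claim follows ``from the continuity and non-atomicity of the conditional measures $m_x^k$'' without spelling out details, and the compactness/Portmanteau argument you give is the standard way to make this precise (indeed, the paper uses the identical maneuver in the proof of Lemma~\ref{lemma: atoms on the projections}).
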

Using the above claim we have the following lemma. 
\begin{lemma}
There exists a family of symmetric self-couplings $\{\xi_x\}_{x\in \Sigma}$ of $\{\restrict{m^k_x}{U_x}\}_{x\in \Sigma}$ with finite energy.  
\end{lemma}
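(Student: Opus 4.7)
Plan. The goal is to produce $\xi_x$ whose support is at a uniform positive distance from the diagonal of $\Pone\times\Pone$; on such a support the integrand $\varphi=-\log d$ is bounded, so finite energy is automatic. The claim's uniform no-concentration estimate is exactly what enables this construction.

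First I would fix a finite Borel partition $\Pone = \bigsqcup_{i=1}^n V_i$ with each $V_i$ of diameter less than $2r$, obtained by refining a finite cover of $\Pone$ by balls of radius $r$ (finite by compactness). Setting $\nu_i^x := m_x^k|_{V_i\cap U_x}$ and $\alpha_i^x := \|\nu_i^x\|$, the claim gives the uniform bound $\alpha_i^x \leq (\alpha+\delta)/10$, while $\sum_i \alpha_i^x = m_x^k(U_x) \in [\alpha-\delta,\alpha+\delta]$. Let $G$ be the graph on $\{1,\dots,n\}$ with an edge $\{i,j\}$ iff $\overline{V_i}\cap\overline{V_j}=\emptyset$; this is a finite family of pairs, so $r_0 := \min\{d(V_i,V_j) : \{i,j\}\in G\} > 0$. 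By choosing the partition carefully (e.g.\ arcs with endpoints in a countable set), each $\alpha_i^x$ depends continuously on $x$ via the weak-$*$ continuity of $m_x^k$.

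Next I would construct non-negative symmetric weights $p_{ij}^x$, depending continuously on $x$, supported on edges of $G$ and satisfying $\sum_j p_{ij}^x = \alpha_i^x$. Existence for each fixed $x$ reduces to a fractional transportation/degree-sequence problem on $G$: the bound $\alpha_i^x \leq (\alpha+\delta)/10$ together with the fact that every vertex of $G$ has all but $O(1)$ neighbors (for an arc partition of $\Pone\cong S^1$, exactly three non-neighbors including itself) makes the relevant Hall/max-flow condition hold with ample slack uniformly in $x$. Symmetry is obtained by averaging any non-symmetric transportation solution with its transpose, and continuity in $x$ by choosing, say, the unique Euclidean minimum-norm element of the (nonempty, continuously varying) feasibility polytope. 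With these weights in hand I set
$$\xi_x := \sum_{i,j}\frac{p_{ij}^x}{\alpha_i^x\,\alpha_j^x}\,\nu_i^x\otimes\nu_j^x,$$
with the convention that summands whose denominator vanishes are omitted.

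Symmetry of $\xi_x$ is inherited from $p_{ij}^x = p_{ji}^x$, and the row-sum identity yields $\pi^j_\ast \xi_x = \sum_i \nu_i^x = m_x^k|_{U_x}$ for $j=1,2$, so the marginals are correct. The support of $\xi_x$ is contained in $\bigcup_{\{i,j\}\in G}V_i\times V_j$, on which $d(u,v) \geq r_0$; hence $\varphi \leq \log(1/r_0)$ on the support and
$$\int_{\Pone\times\Pone}\varphi\,d\xi_x \;\leq\; (\alpha+\delta)\log(1/r_0)$$
for every $x\in\Sigma$. Integrating against $\mu_k$ then yields finite total energy. The only step I expect to require genuine care is the continuous selection $x\mapsto (p_{ij}^x)$, but this is routine given the uniform slack in the Hall-type condition; everything else is bookkeeping.
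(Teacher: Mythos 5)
Your construction is correct and reaches the same conclusion, but by a genuinely different route from the paper's. The paper's proof is iterative: starting from an arbitrary symmetric self-coupling (e.g.\ the normalized product coupling $\frac{1}{\|\nu_x\|}\nu_x\times\nu_x$), it performs $m$ rounds of surgery, each round killing the mass on a product $B(v_i,r)\times B(v_i,r)$ by swapping it against mass supported far from the diagonal; the no-concentration bound $m_x^k(B(u,2r))<(\alpha+\delta)/10$ is used to ensure there is enough ``far'' mass for the swap to produce a positive measure (the paper's $\theta_x<1$). Your proof instead builds the coupling directly in one step: after partitioning $\Pone$ into short arcs $V_i$ and forming the graph $G$ on pairs with disjoint closures, you realize the mass vector $(\alpha_i^x)$ as the fractional degree sequence of a symmetric non-negative matrix supported on $E(G)$, then tensor; the same no-concentration bound is exactly what verifies the Hall/max-flow feasibility condition, with uniform slack because $\sum_i\alpha_i^x=\alpha+\delta$ while each $\alpha_i^x<(\alpha+\delta)/10$ and each vertex of $G$ has at most three non-neighbors. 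The combinatorial route is arguably more transparent and gives a cleaner uniform support bound away from the diagonal; the paper's route avoids the graph-theoretic feasibility argument at the cost of some iterative bookkeeping. The one place your write-up is soft is the continuous selection $x\mapsto(p_{ij}^x)$: note the paper's definition of a ``family of symmetric self-couplings'' only asks for \emph{measurability} of $x\mapsto\xi_x$, so you can discard the Euclidean minimum-norm argument in favor of a standard measurable selection theorem (e.g.\ Kuratowski--Ryll-Nardzewski applied to the closed-valued measurable correspondence $x\mapsto P^x$), which is both easier to justify and all that is needed.
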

\begin{proof}
Let $\{\xi_x\}_{x\in \Sigma}$ be any family of symmetric self-couplings of $\{\restrict{m^k_x}{U_x}\}_{x\in \Sigma}$.  Let $\{v_i\}_{i= 1,\dots, m}\subset \Pone $ be such that $\Pone=\bigcup_{i=1}^m B(v_i, r)$.  
Define $\xi^1_x$ by
$$\xi^1_x:= \xi_x - \restrict{\xi_x}{B(v_1, r)\times B(v_1, r)} -\theta_x \restrict{\xi_x}{B(v_1, 2r)^c\times B(v_1, 2r)^c} + \zeta_x +  \iota _* \zeta_x $$

where 
$$\theta_x=  \dfrac{ {\xi_x}\left({B(v_1, r)\times B(v_1, r)}\right)}{{\xi_x}\left({B(v_1, 2r)^c\times B(v_1, 2r)^c}\right)}$$
and 
$$\zeta_x
:= \dfrac{1}{{\xi_x}\left({(B(v_1, r))^2}\right)} \left((\pi_1)_*\restrict{\xi_x}{B(v_1, r)^2}  \times 
(\pi_1)_*\theta_x  \restrict{\xi_x}{\left(B(v_1, 2r)^c\right)^2} \right). $$ 

As
\begin{align*}
\xi_x(B(v_1, r)\times B(v_1, r)) &\le 
\xi_x(B(v_1, 2r)\times B(v_1, 2r)) \\
& = \restrict{m^k_x}{U_x}(B(v_1, 2r)) - \xi_x(B(v_1, 2r)\times B(v_1, 2r)^c) \\
& <  \restrict{m^k_x}{U_x}(B(v_1, 2r)^c)- \xi_x( B(v_1, 2r)^c \times B(v_1, 2r)) \\
& = \xi_x( B(v_1,2 r)^c \times B(v_1, 2r)^c) \end{align*}
we have that  $\theta_x<1$ hence $\xi^1_x$ is a (positive) measure.  
Moreover, $\xi^1_x$ is clearly symmetric and we check that $(\pi_1)_*\xi^1_x= \restrict{m^k_x}{U_x}$.  

The family $\{\xi^1_x\}_{x\in \Sigma}$ depends measurably on $x$ and  satisfies $\xi^1_x(B(v_1, r) \times B(v_1, r)) = 0$ and 
$\xi^1_x(B(v_i, r) \times B(v_i, r)) \le \xi_x(B(v_i, r) \times B(v_i, r))$ for all $1\le i \le m$.  
Iterating the above construction yields a measurable family of symmetric self-couplings  $\{\xi^m_x\}_{x\in \Sigma}$ of $\{\restrict{m^k_x}{U_x}\}_{x\in \Sigma}$, defined for every $x\in \Sigma$, with 
$$\xi^\ell_x(B(v_i, r) \times B(v_i, r)) = 0 $$ for every $1\le i\le m$. Taking $0<r_0<r$ to be the Lebesgue number of the cover $\{ B(v_i, r)\}^l_{i=1}$ we have $$\xi^m_x(B(u, r_0) \times B(u,r_0))=0$$ for every $u\in \Pone$ and $x\in \Sigma$.  Then $\int \phi \ d \xi^m _x \le -\log (r_0)$ for every $x\in \Sigma$.  
\end{proof}

We introduce the first of many modifications we perform on our families of symmetric self-couplings.  
\begin{lemma}\label{lemma: coupling gives zero measure to U2 complementar big atoms}
Let $\lbrace \xi_x\rbrace _{x\in \Sigma} $ be a family of symmetric self-couplings of $\{\restrict{m^k_x}{U_x}\}_{x\in \Sigma}$ with finite energy. Then there exists a family of symmetric self-couplings $\{\dot{\xi}_x\}_{x\in \Sigma}$ of $\{\restrict{m^k_x}{U_x}\}_{x\in \Sigma}$  with $\dot{\xi}_x (U_2 ^c\times U_2 ^c) =0$ and such that for every $x\in \Sigma$
\begin{displaymath}
\int \varphi \ d\dot{\xi}_x \leq \int \varphi \ d\xi _x + 4\delta M_2.
\end{displaymath} 
\end{lemma}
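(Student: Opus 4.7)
The plan is to modify $\xi_x$ by removing $\xi_x|_{U_2^c \times U_2^c}$ and compensating for the damage to the marginals by ``transporting'' a small fraction of $\xi_x|_{U_4\times U_4}$ to $(U_4\times U_2^c)\cup (U_2^c\times U_4)$, where the additive Margulis potential $\phi$ is bounded by $M_2$. The quantitative setup is that $\nu_x:=m^k_x|_{U_x}$ has total mass $\alpha+\delta$ with $\nu_x(U_4)>\alpha-\delta$, so $\sigma_x:=\nu_x|_{U_2^c\cap U_x}$ has mass $<2\delta$, and
$$\xi_x(U_4\times U_4)\geq\|\nu_x\|-2\nu_x(U_4^c\cap U_x)\geq\alpha-3\delta,$$
which is positive and much larger than $\|\sigma_x\|$ thanks to the choice $100\delta M_1M_2<\alpha$.

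Set $\sigma_x':=(\pi_1)_\ast\xi_x|_{U_2^c\times U_2^c}$; by symmetry of $\xi_x$ this equals $(\pi_2)_\ast\xi_x|_{U_2^c\times U_2^c}$ as well, and $\|\sigma_x'\|\leq\|\sigma_x\|<2\delta$. If $\sigma_x'=0$ put $\dot\xi_x:=\xi_x$; otherwise define
$$c:=\frac{\|\sigma_x'\|}{\|\xi_x|_{U_4\times U_4}\|}\in(0,1),\qquad a_x:=c\cdot(\pi_1)_\ast\xi_x|_{U_4\times U_4},$$
so that $a_x$ is a measure on $U_4$ of mass $\|\sigma_x'\|$ which also equals $(\pi_2)_\ast(c\,\xi_x|_{U_4\times U_4})$ by symmetry. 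Set
$$\dot\xi_x := \xi_x - \xi_x|_{U_2^c\times U_2^c} - c\cdot\xi_x|_{U_4\times U_4} + \frac{1}{\|\sigma_x'\|}\bigl(a_x\otimes\sigma_x' + \sigma_x'\otimes a_x\bigr).$$
Positivity follows from $c<1$; symmetry is immediate from each piece being symmetric; the first marginal is $\nu_x-\sigma_x'-a_x+(a_x+\sigma_x')=\nu_x$ and similarly for the second; and $\dot\xi_x(U_2^c\times U_2^c)=0$ since the added term is supported in $(U_4\times(U_2^c\cap U_x))\cup((U_2^c\cap U_x)\times U_4)$, disjoint from $U_2^c\times U_2^c$. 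Measurability of $x\mapsto\dot\xi_x$ is inherited from $\xi_x,\nu_x$ and continuity of $\rho$.

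For the energy estimate, the two subtractions contribute non-positively since $\phi\geq 0$, while the added term is supported in $(U_4\times U_2^c)\cup(U_2^c\times U_4)\subset (U_3\times U_2^c)\cup(U_2^c\times U_3)$, on which $\phi\leq M_2$ by the definition of $M_2$. Since its total mass is $2\|\sigma_x'\|<4\delta$,
$$\int\phi\,d\dot\xi_x - \int\phi\,d\xi_x\leq 2M_2\|\sigma_x'\|<4\delta M_2,$$
as required. The main obstacle addressed by the construction is marginal conservation: the forbidden marginal $\sigma_x'$ sits in $U_2^c$, and any symmetric self-coupling of $\sigma_x'$ alone is forced to be supported inside $U_2^c\times U_2^c$; thus one cannot simply patch the marginal by a symmetric additive correction. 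The remedy is to ``borrow'' a symmetric piece of the correct total mass from $\xi_x|_{U_4\times U_4}$ and pair it with $\sigma_x'$ by a product-type coupling, which preserves the marginals and, because $U_4\subset U_3$ is uniformly separated from $U_2^c$, keeps the added mass in a region where $\phi$ is controlled by $M_2$.
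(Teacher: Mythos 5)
Your proof is correct and takes essentially the same approach as the paper: remove $\xi_x|_{U_2^c\times U_2^c}$, compensate by borrowing a piece of the same mass from a symmetric block near $q$, and redistribute it as a product-type coupling with the removed marginal. The only cosmetic differences are that you borrow from $U_4\times U_4$ where the paper borrows from $U_3\times U_3$ (both work because $U_4\subset U_3$ and the relevant mass lower bound $\alpha-3\delta$ holds for either), and you parameterize the borrowed mass via a constant $c$ rather than via a ratio of norms, which is just a notational rearrangement of the same construction.
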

\begin{proof}
Let $\nu _x:=(\pi_1)_*\restrict{\xi _{x}}{ U^c_2\times U^c_2}$   and $\eta _x= (\pi_1)_*\restrict{\xi _{x}}{ U_3\times U_3}$. Define
\begin{displaymath}
\dot{\xi}_x :=\xi _x -\restrict{\xi _{x}}{  U^c_2\times U^c_2} - \dfrac{\| \nu _x \|}{\| \eta _x \|}\restrict{ \xi _{x}}{ U_3\times U_3} + \dfrac{1}{\| \eta _x \|}\left( \nu _x \times \eta _x + \eta _x \times \nu _x \right).
\end{displaymath}

We have that $\dot{\xi}_x (U_2 ^c\times U_2 ^c) =0$,  $(\pi_j)_* (\dot{\xi}_x) = \xi_x$ for both $j= \{1,2\}$, and  that 
$\dot{\xi}_x$ is  symmetric.  
Moreover, we have 
 $$\| \eta _x \|\ge \|\xi_x\|  -  2 \xi_x(( U_x\sm U_3)\times \Pone) = 
 ( \alpha +\delta )-4\delta.$$   
and   
$$\| \nu _x \| = \|\restrict { \xi _{x}}{ U^c_2\times U^c_2} \| \le \xi _x(U_2^c\times \Pone) \le 2\delta.$$
It follows for every $x$ that $\| \nu _x \| \le \| \eta _x \|$ and hence $\dot{\xi}_x$ is a (positive) measure.

Note that $\nu_x\times \eta_x$ is supported on $U_2^c \times U_3$.  It follows that 
\begin{displaymath}
\begin{split}
\int \varphi \ d\dot{\xi} _x & \leq \int \varphi  \ d\xi  _x + \dfrac{1}{\| \eta  _x\| }\int \varphi \ d(\nu  _x \times \eta  _x + \eta  _x \times \nu  _x)\\
& \leq \int \varphi \ d\xi  _x + \dfrac{2}{\| \eta  _x\| }\int \varphi  \ d(\nu  _x \times \eta  _x )\\
& \leq \int \varphi \ d\xi  _x + \dfrac{2}{\| \eta  _x\| }\int M_2 \ d(\nu  _x \times \eta  _x )\\
& \leq \int \varphi\  d\xi  _x + \dfrac{2M_2\| \eta  _x\| \| \nu  _x\|}{\| \eta  _x\| }.
\end{split}
\end{displaymath}

As $\| \nu  _x\| \leq 2\delta$ the claim follows. 
\end{proof}

\subsection{Key proposition}
We are now in position to state the key proposition that establishes the contradiction to \eqref{eq:mainassump} in the case when the measures $m^k_x$ are non-atomic. To prove it we exploit the fact that $q$ is an expanding point for the projective cocycle $\P A^N_k$ (recall \eqref{eq: estimative 2}) and the invariance of $m_k$ with respect to $F_{A_k}$ (recall Remark \ref{remark: invariant measure}).
\begin{proposition}\label{prop:new self-coupling big atoms}\label{prop:key}
Let $\lbrace \xi _x\rbrace _{x\in \Sigma} $ be a family of symmetric self-couplings of $\{\restrict{m^k_x}{U_x}\}_{x\in \Sigma}$ with finite energy. Then, there exists a family of symmetric self-couplings $\lbrace \ddot{\xi} _x\rbrace _{x\in \Sigma} $ of $\{\restrict{m^k_x}{U_x}\}_{x\in \Sigma}$ such that
\begin{displaymath}
\int \int \varphi\  d\ddot{\xi}_x d\mu_k(x)  \leq \int \int \varphi \ d\xi _x d\mu_k(x) -\alpha. 
\end{displaymath} 
\end{proposition}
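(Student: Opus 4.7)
The strategy is to exploit the expansion of the projective cocycle $\mathbb{P}A_k^N$ near the attracting direction $q$, as quantified by \eqref{eq: estimative 2}, together with the $F_{A_k}$-invariance relation for the conditional measures (Remark \ref{remark: invariant measure}). The coupling $\ddot{\xi}_x$ will be built by pulling back pieces of a modified coupling $\dot{\xi}_y$ along preimages $y\in f^{-N}(x)$, plus a small residual correction enforcing the marginal condition.

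First I would apply Lemma \ref{lemma: coupling gives zero measure to U2 complementar big atoms} to replace $\{\xi_x\}$ by a family $\{\dot{\xi}_x\}$ satisfying $\dot{\xi}_x(U_2^c\times U_2^c)=0$, at cost at most $4\delta M_2$ per $x$. Using $(\pi_j)_{\ast}\dot{\xi}_y=\restrict{m^k_y}{U_y}$, symmetry, and $m^k_y(U_2)\ge m^k_y(U_4)>\alpha-\delta$, a short bookkeeping argument gives the lower bound $\dot{\xi}_y(U_2\times U_2)\ge \alpha-3\delta$. I would then define
$$\xi^{\sharp}_x := \sum_{y\in f^{-N}(x)}\frac{1}{J_{\mu_k}f^N(y)}\bigl(A_k^N(y)\times A_k^N(y)\bigr)_{\ast}\bigl(\restrict{\dot{\xi}_y}{U_2\times U_2}\bigr).$$
Since $U_2\times U_2$ is $\iota$-invariant and $\mathbb{P}A_k^N(y)(U_2)\subset U_0\subset U_x$, the measure $\xi^{\sharp}_x$ is symmetric and supported in $U_x\times U_x$. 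The invariance identity $\sum_y \frac{1}{J_{\mu_k}f^N(y)} A_k^N(y)_{\ast}m^k_y=m^k_x$ together with the support constraint shows $(\pi_j)_{\ast}\xi^{\sharp}_x\le \restrict{m^k_x}{U_x}$. Setting $\tau_x:=\restrict{m^k_x}{U_x}-(\pi_1)_{\ast}\xi^{\sharp}_x$, the mass bound yields $\|\tau_x\|\le 4\delta$. Invoking uniform non-atomicity of $\{m^k_x\}$ (a consequence of continuity in $x$ and compactness of $\Sigma$), I would fix some $r_0>0$ and construct, measurably in $x$, a symmetric self-coupling $\eta_x$ of $\tau_x$ supported at distance $\ge r_0$ from the diagonal, so that $\int\varphi\,d\eta_x\le |\log r_0|\,\|\tau_x\|\le 4\delta|\log r_0|$. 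Then set $\ddot{\xi}_x:=\xi^{\sharp}_x+\eta_x$, which is a symmetric self-coupling of $\restrict{m^k_x}{U_x}$ by construction.

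For the energy estimate, \eqref{eq: estimative 2} applied pointwise gives
$$\int_{U_2\times U_2}\varphi\circ(A_k^N(y),A_k^N(y))\,d\dot{\xi}_y \le \int_{U_2\times U_2}\varphi\,d\dot{\xi}_y-\kappa(y)\dot{\xi}_y(U_2\times U_2).$$
Integrating against $\mu_k$ and using the transfer identity $\int\sum_{y\in f^{-N}(x)}G(y)/J_{\mu_k}f^N(y)\,d\mu_k(x)=\int G\,d\mu_k$ (which follows from the definition of the Jacobian) one obtains
$$\int\!\!\int\varphi\,d\xi^{\sharp}_x\,d\mu_k(x) \le \int\!\!\int\varphi\,d\dot{\xi}_y\,d\mu_k(y)-\int\kappa(y)\dot{\xi}_y(U_2\times U_2)\,d\mu_k(y).$$
Since $\dot{\xi}_y(U_2\times U_2)\in[\alpha-3\delta,\alpha+\delta]$ and $|\kappa|\le M_1$, while $\int\kappa\,d\mu_k>3$ by \eqref{eq:kappa}, the last integral is at least $3\alpha-O(\delta M_1)$. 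Combining with Lemma \ref{lemma: coupling gives zero measure to U2 complementar big atoms} and the $\eta_x$ contribution,
$$\int\!\!\int\varphi\,d\ddot{\xi}_x\,d\mu_k \le \int\!\!\int\varphi\,d\xi_x\,d\mu_k-3\alpha+O\bigl(\delta(M_1+M_2+|\log r_0|)\bigr).$$
The parameter choice $100\delta M_1M_2<\alpha$ keeps the error well below $2\alpha$, giving the claimed decrease of $\alpha$ (in fact more).

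\textbf{Main obstacle.} The principal technical issue will be the construction of the residual coupling $\eta_x$ with uniformly bounded energy and measurable dependence on $x$. This relies on a uniform-in-$x$ non-atomicity estimate---essentially a modulus of continuity for $u\mapsto m^k_x(B(u,r))$ uniform in $u\in\Pone$ and $x\in\Sigma$---which will follow from joint continuity of $(x,u)\mapsto m^k_x(\overline{B(u,r)})$ and compactness of $\Sigma\times\Pone$. A subsidiary verification is that the marginals of $\xi^{\sharp}_x$ are exact sub-measures of $\restrict{m^k_x}{U_x}$, which uses the strong geometric inclusion $\mathbb{P}A_k^N(y)(U_2)\subset U_0\subset U_x$ for \emph{every} preimage $y$ (not merely $\mu_k$-a.e.), justifying the use of the invariance identity termwise.
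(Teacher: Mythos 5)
Your architecture is right and closely parallels the paper's: reduce to $\{\dot\xi_x\}$ with $\dot\xi_x(U_2^c\times U_2^c)=0$ via Lemma~\ref{lemma: coupling gives zero measure to U2 complementar big atoms}, pull back under $f^{-N}$ using the Jacobian identity, and harvest the gain from the expansion estimate~\eqref{eq: estimative 2}. The transfer identity and the $\kappa$-integral lower bound are handled correctly, and the mass bound $\|\tau_x\|\le 4\delta$ is right.

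The gap is exactly where you flag the ``main obstacle,'' and it is not repaired by the ingredient you propose. You want a symmetric self-coupling $\eta_x$ of the residual $\tau_x=\restrict{m^k_x}{U_x}-(\pi_1)_*\xi^\sharp_x$ supported at distance $\ge r_0$ from the diagonal, and you want to bound its energy by $\|\tau_x\|\,|\log r_0|$ using uniform non-atomicity of $m^k_x$. But uniform non-atomicity of the ambient measures only gives a fixed $r$ with $m^k_x(B(u,2r))<\tfrac{\alpha+\delta}{10}$ for all $x,u$; it says nothing about the \emph{defect} $\tau_x$, which has total mass at most $4\delta\ll\tfrac{\alpha+\delta}{10}$. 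Nothing in your construction precludes $\tau_x$ from sitting entirely inside a ball of radius $\epsilon\ll r$ (consistent with the non-atomicity bound for $m^k_x$), in which case every symmetric self-coupling of $\tau_x$ is supported within $2\epsilon$ of the diagonal and has energy $\gtrsim\|\tau_x\|\log(1/\epsilon)$, with no a priori bound. Even if one restricted attention to the favorable case, the parameter ordering bites: $\delta$ is fixed before $k$, while any such $r_0$ depends on $k$, so the constraint $4\delta|\log r_0|\lesssim\alpha$ cannot be imposed at the moment $\delta$ is chosen.

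The paper avoids this precisely by \emph{not} self-coupling the residual. Instead of lumping the defect into a single $\tau_x$, it keeps track of its geometry: $I_x$ (mass entering $U_x$ from $\P A_k^N(y)(U_y^c)$) is supported in $U_1^c$, and $O_x$ (mass of $\hat\xi_x$ escaping $U_x$ on the second factor) is handled accordingly. The correction terms $\restrict{O_x}{U_2}\times I_x$ and $\lambda_x\times\theta_x$ pair pieces living on opposite sides of the annuli $U_3\subset U_2\subset U_1$, so each factor is \emph{cross-coupled} with a large mass ($\|\theta_x\|\ge\alpha-3\delta$, living in $U_3$) at bounded distance, giving an energy cost $\le M_2$ per unit of residual mass rather than $|\log r_0|$. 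That is the piece of structure your construction loses by restricting $\dot\xi_y$ to $U_2\times U_2$ before pushing forward: the residual $\tau_x$ no longer carries a useful support constraint, and the only way to couple a geometrically uncontrolled small measure at bounded energy is to pair it against something large and well separated, not against itself.
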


As $\phi$ is a non-negative function, by recursive applications of Proposition \ref {prop:new self-coupling big atoms} we arrive at a contradiction.

To start the proof of Proposition \ref{prop:new self-coupling big atoms}, given $\{\xi_x\}_{x\in \Sigma}$ let $\{\dot \xi_x\}_{x\in \Sigma}$ be the family of symmetric self-couplings  constructed in Lemma \ref{lemma: coupling gives zero measure to U2 complementar big atoms}.  
For each $x\in \Sigma $ define 
\begin{equation}\label{eq: almost self-coupling small atoms}
\hat{\xi}  _x = \sum _{y\in f^{-N}(x)} \dfrac{1}{J_{\mu_k}f^N(y)} \left( \P A^N_k(y) \times \P A^N_k(y)\right) _{\ast} \dot{\xi} _y.
\end{equation}
The restriction of $\hat{\xi}_x$ to $U_x \times U_x$ is not necessarily a self-coupling of $\restrict{m^k_{x}}{ U_x}$. 
Write  $\eta _x $ for the projection $\eta _x:= (\pi_1)_*\left(\restrict{\hat{\xi}_{x}}{U_x \times U_x}\right)$.   
Below, we estimate the defect between $\eta_x$ and $\restrict{m^k_{x}}{ U_x}$.

Write $g(y)= \dfrac{1}{J_{\mu_k}f^{N}(y)}.$  Recall that for any $x\in \Sigma$
\begin{displaymath}
\sum _{y\in f^{-N}(x)} g(y) 
\P A^N_k(y)_{\ast}m^k_y=m^k_x.
\end{displaymath}
Define two families of measures on $\Pone$ by 
\begin{itemize}
\item $\displaystyle I_x:= {\left(\sum _{y\in f^{-N}(x)} g(y) \P A^N_k (y)_{\ast}\left( \restrict{m^k_{y }}{  U_y^c}\right) \right)}\bigg|_{U_x}$, and 
\item $O_x:= (\pi_1)_* \left(\restrict{\hat{\xi}_{x}}{U_x \times U_x^c}\right)$.
\end{itemize}
The families $\{I_x\} _{x\in \Sigma}$ and $\{O_x\} _{x\in \Sigma}$ are measurable.  

\begin{lemma}\label{lemma: comparing the projection with the real measure}
We have 
\begin{displaymath} 
\restrict{m^k_{x}}{ U_x} =\eta _x +I_x + O_x.  
\end{displaymath}
Moreover, for every $x\in \Sigma$  we have 
$\| O_x\| \leq \| I_x \| \leq 2\delta $ and $\textrm{supp}(I_x)\subset U^c_1$. 
\end{lemma}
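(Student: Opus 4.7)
The plan is to expand the decomposition of $\restrict{m^k_x}{U_x}$ using the $F_{A_k}$-invariance relation from Remark \ref{remark: invariant measure} and the construction of $\hat\xi_x$ in \eqref{eq: almost self-coupling small atoms}. Since $\dot\xi_y$ is a symmetric self-coupling of $\restrict{m^k_y}{U_y}$, I have $(\pi_1)_\ast \dot\xi_y = \restrict{m^k_y}{U_y}$, so
\[
(\pi_1)_\ast \hat\xi_x = \sum_{y\in f^{-N}(x)} g(y)\, \P A_k^N(y)_\ast \restrict{m^k_y}{U_y}.
\]
Splitting $m^k_y = \restrict{m^k_y}{U_y} + \restrict{m^k_y}{U_y^c}$ in the invariance relation and restricting to $U_x$ gives $\restrict{m^k_x}{U_x} = (\pi_1)_\ast\hat\xi_x\big|_{U_x} + I_x$, and then writing $U_x = U_x \sqcup U_x^c$ in the second coordinate yields $(\pi_1)_\ast\hat\xi_x\big|_{U_x} = \eta_x + O_x$, proving the identity $\restrict{m^k_x}{U_x} = \eta_x + I_x + O_x$.

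For the support condition $\mathrm{supp}(I_x)\subset U_1^c$, I would use the property (2b) of the chosen neighborhoods, which says $\overline{U_1} \subset \P A_k^N(y)(U_0)$. Since $\P A_k^N(y)$ is a diffeomorphism of $\Pone$, this gives $\P A_k^N(y)^{-1}(\overline{U_1}) \subset U_0 \subset U_y$ (the inclusion $U_0\subset U_y$ following from $m^k_y(U_0)<\alpha+\delta = m^k_y(U_y)$ since both are balls centered at $q$). Contrapositively, if $v \in U_y^c$ then $\P A_k^N(y)(v) \in U_1^c$, so the push-forward of $\restrict{m^k_y}{U_y^c}$ is supported in $U_1^c$.

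For the bound $\|I_x\|\le 2\delta$, since $m^k_y$ and $m^k_x$ are probability measures, taking total mass on both sides of the invariance relation restricted to $U_x$ yields
\[
\alpha+\delta = m^k_x(U_x) = \sum_{y\in f^{-N}(x)} g(y)\, \P A_k^N(y)_\ast \restrict{m^k_y}{U_y}(U_x) + \|I_x\|.
\]
Since $U_4 \subset U_y$ (using $m^k_y(U_4)>\alpha-\delta$ together with $m^k_y(U_y)=\alpha+\delta > m^k_y(U_4)$ — wait, one instead uses $m^k_y(U_4)<\alpha+\delta=m^k_y(U_y)$ which gives $U_4\subset U_y$) and $\P A_k^N(y)(U_4) \subset U_3 \subset U_0 \subset U_x$, each summand is at least $m^k_y(U_4) > \alpha-\delta$. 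Using $\sum_y g(y) = 1$ I get the sum exceeds $\alpha-\delta$, hence $\|I_x\| < 2\delta$.

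Finally, for $\|O_x\|\le \|I_x\|$, I exploit the symmetry $\iota_\ast\hat\xi_x = \hat\xi_x$, which is inherited from the symmetry of each $\dot\xi_y$ and the fact that $\P A_k^N(y)\times \P A_k^N(y)$ commutes with $\iota$. The total mass of $\hat\xi_x$ equals $\sum_y g(y)(\alpha+\delta) = \alpha+\delta$, while the first-marginal computation above gives $(\pi_1)_\ast\hat\xi_x(U_x) = \alpha+\delta - \|I_x\|$, hence $\hat\xi_x(U_x^c \times \Pone) = \|I_x\|$. Then by symmetry,
\[
\|O_x\| = \hat\xi_x(U_x \times U_x^c) = \hat\xi_x(U_x^c\times U_x) \le \hat\xi_x(U_x^c\times \Pone) = \|I_x\|.
\]
The main subtlety, which I expect is the only non-routine step, is the correct use of the nested neighborhoods $U_4\subset U_3\subset U_2\subset U_1\subset U_0\subset U_x$ together with properties (2a)--(2b) of the parameters so that the image of $U_y$ under $\P A_k^N(y)$ covers $U_x$ except for a set of mass less than $2\delta$, and the image of $U_y^c$ avoids $U_1$ entirely.
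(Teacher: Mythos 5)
Your proposal is correct and follows essentially the same route as the paper: the identity comes from splitting $m^k_y$ into $\restrict{m^k_y}{U_y}+\restrict{m^k_y}{U_y^c}$ in the invariance relation and then splitting the second coordinate into $U_x$ and $U_x^c$; the support statement uses $\overline{U_1}\subset \P A_k^N(y)(U_0)\subset \P A_k^N(y)(U_y)$; and $\|O_x\|\le\|I_x\|$ comes from identifying $\|I_x\|$ with $\hat\xi_x(U_x^c\times\Pone)$ and using equality of marginals. The only (inessential) variation is in the bound $\|I_x\|\le 2\delta$: the paper observes directly that $I_x\le m_x^k$ together with $\mathrm{supp}(I_x)\subset U_x\setminus U_1$ gives $\|I_x\|\le m_x^k(U_x\setminus U_1)\le 2\delta$, whereas you do a dual mass-accounting argument by lower-bounding the retained mass via $U_4\subset U_y$ and $\P A_k^N(y)(U_4)\subset U_0\subset U_x$; both are correct.
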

\begin{proof}

We have 
\begin{align}
\restrict{m^k_{x}}{ U_x} 
	&= \left( \sum _{y\in f^{-N}(x)} g(y) \P A^N_k(y)_{\ast}m_y^k\right)\bigg|_{U_x}\notag \\
	&=\left( \sum _{y\in f^{-N}(x)} g(y) \P A^N_k(y)_{\ast}(\restrict{m_y^k}{U_y^c})\right)\bigg|_{U_x} \label{eq:split2}  \\ &  +
	\left( \sum _{y\in f^{-N}(x)} g(y) \P A^N_k(y)_{\ast}(\restrict{m_y^k}{ U_y})\right)\bigg|_{U_x}.  \label{eq:split}
\end{align}
The  term  \eqref{eq:split2} is precisely $I_x$.  The  term  \eqref{eq:split} is 
$$\restrict{\left((\pi_1)_*\hat \xi_x \right)}{U_x} =(\pi_1)_*(\restrict{\hat \xi_x}{U_x \times U_x}) + O_x$$ 
hence we obtain 
$$\restrict{m^k_{x}}{ U_x} =\eta _x +I_x + O_x.  $$

To bound $\| I_x\|$ note that for any measurable  set $B\subset \Pone$ we have  $I_x(B)  \le m_x^k(B)$.  Moreover, $I_x$ is supported on $$\bigcup _{y\in f^{-N}(x) } \P  A^N_k (y)(U_y^c)\subset \bigcup _{y\in f^{-N}(x) } \P  A^N_k (y)(U_0^c) \subset 
U_1 ^c.$$ 
Thus $\|I_x\| \le m_x^k (U_x\sm U_1) \le 2\delta$.  

To derive the bound on $\|O_x\|$
first note that 

\begin{equation*}%\label{eq: eq2 auxilliary lemma}
%\begin{split}
m^k_x(U_x)
%&=\sum _{y\in f^{-N}(x)} g(y) \P A^N_k (y)_{\ast}m^k_y (U_x) \\
%& 
=\sum _{y\in f^{-N}(x)} g(y)  \P A^N_k (y)_{\ast} \left( \restrict{m^k_{y}}{ U_y}\right) (U_x) \\
%&
+\sum _{y\in f^{-N}(x)} g(y) \P A^N_k (y)_{\ast}\left(\restrict{ m^k_{y}}{  U_y^c}\right) (U_x).
%\end{split}
\end{equation*}
Similalry, 
\begin{align*}%\label{eq: eq1 auxilliary lemma}
\sum _{y\in f^{-N}(x)} g(y) \P A^N_k (y)_{\ast}\left( \restrict{m^k_{y}}{ U_y}\right) (U_x) 
&+ \sum _{y\in f^{-N}(x)} g(y) \P A^N_k (y)_{\ast}\left( \restrict{m^k_{y}}{ U_y}\right) (U_x^c)\\
&= \sum _{y\in f^{-N}(x)} g(y) \P A^N_k (y)_{\ast}\left( \restrict{m^k_{y}}{ U_y}\right) (\Pone) \\
&=  \sum _{y\in f^{-N}(x)} g(y)  \left( m^k_{y}( U_y)\right) \\
& = \alpha+\delta= m^k_x(U_x)
\end{align*}
where the final equality follows from the choice of open sets $U_y$. 
Combining the above we have for any $x\in \Sigma$ that 
\begin{equation}\label{eq:banana}
\sum _{y\in f^{-N}(x)} g(y) \P A^N_k (y)_{\ast}\left( \restrict{m^k_{y}}{  U_y^c}\right) (U_x)= \sum _{y\in f^{-N}(x)} g(y) \P A^N_k (y)_{\ast}\left( \restrict{m^k_{y}}{ U_y}\right) (U_x^c).
\end{equation}

The lefthand side of \eqref{eq:banana} is $I_x$.  The righthand side of \eqref{eq:banana} is 
$$(\pi_2)_*(\restrict{\hat \xi }{\Pone \times U_x^c}).$$
Then 
%\begin{align*}
$$\|I_x\| = \|(\pi_2)_*(\restrict{\hat \xi }{\Pone \times U_x^c})\| \ge 
\|(\pi_2)_*(\restrict{\hat \xi }{U_x \times U_x^c})\| = \|O_x\| 
 $$
%\end{align*}
and the lemma follows.\end{proof}

\subsection{Proof of Proposition \ref{prop:key}}
We conclude this section with the proof of Proposition \ref{prop:key}.  
\begin{proof}[Proof of Proposition \ref{prop:key}]
Recall the notation and  constructions above.  
Define measurable families of measures  
\begin{itemize}\item 
$\theta _x: = (\pi_1)_* \left(\restrict{\hat{\xi}_{x}}{  U_3\times U_3}\right) ;$ 
\item
$\lambda _x :=\left( 1-\dfrac{\|\restrict{ O_{x}}{  U_2} \|}{\| I_x\|}\right) I_x + \restrict{O^k_{x}}{ U^c_2}.$
\end{itemize}
For $x\in \Sigma$ we define a new measure on $\Pone\times \Pone$ by 
\begin{equation}\label{eq: definition of the new coupling small atoms }
\begin{split}
\ddot{\xi} _x := & \  \restrict{\hat{\xi}_{x}}{U_x\times U_x} -  \dfrac{\|\lambda  _x \|}{\| \theta  _x \|} \restrict{\hat{\xi} _{x}}{ U_3\times U_3}  \\
&+ \dfrac{1}{\| I _x \| } \left( \restrict {O _{x}}{ U_2} \times I _x + I _x \times \restrict {O _{x}}{ U_2} \right) \\
&+ \dfrac{1}{\| \theta  _x \| } \left(\lambda  _x \times \theta  _x + \theta  _x \times \lambda  _x \right).
\end{split}
\end{equation}
The family $\{\ddot{\xi}_x\}_{x\in \Sigma}$ is  measurable.  

As $%\| \restrict {O^k_{x}}{ U_2} \| \leq
\| O _x \| \leq \| I _x \| $, we have that $\lambda  _x$ is a (positive) measure. Moreover, we have 
$$ \| \theta  _x \| \ge 
\|\restrict{\hat{\xi}_{x}}{  U_3\times \Pone} \| - 
\| \restrict{\hat{\xi}_{x}}{  U_3\times U_3^c} \|\geq m_x^k(U_3) - m_x^k(U_x\sm U_3)\ge \alpha -3\delta.$$
As 
$$\|\lambda  _x \| \leq 4\delta\le \alpha - 3 \delta \le  \| \theta  _x \|$$
we have  that $\ddot{\xi} _x$ is a (positive) measure.  Also $\ddot{\xi} _x$  is clearly symmetric.

Let $D\subset \Pone$ be a measurable set. Then,
\begin{displaymath}
\begin{split}
(\pi _{1})_* \ddot{\xi} _x(D)&=\ddot{\xi} _x(D\times \Pone)= \eta  _x(D)- \dfrac{\| \lambda  _x\|}{\| \theta  _x\| }\theta  _x(D)\\ 
&+  O _x(D\cap U_2) + \dfrac{1}{\| I _x\|}I _x(D)O _x(U_2)+ \lambda  _x(D) +\dfrac{\| \lambda  _x\|}{\| \theta  _x\| }\theta  _x(D)\\
&=\eta  _x(D)+O _x(D\cap U_2)+ \dfrac{1}{\| I _x\|}I _x(D)O _x(U_2)+ \lambda  _x(D)\\
& =\eta _x (D)+ \restrict {O _{x}}{ U_2} (D)+\dfrac{\| \restrict {O _{x}}{ U_2}\|}{\| I _x\|}I _x  (D)+ \left( 1-\dfrac{\| \restrict {O _{x}}{ U_2} \|}{\| I _x\|}\right) I _x  (D)+ \restrict{O _{x}}{  U^c_2} (D)\\
&= \eta  _x  (D)+ O _x  (D)+ I _x (D) =\restrict{m^k_{x}}{ U_x} (D)  
\end{split}
\end{displaymath}
hence the family  $\{\ddot{\xi} _x\}_{x\in \Sigma}$ is a family of  symmetric self-couplings of $\{\restrict{m^k _{x}}{ U_x}\}_{x\in \Sigma}$.

From the definition of $\ddot{\xi}_x$ we have 
\begin{equation}\label{eq: limiting the energy of the new couplin small atoms}
\begin{split}
\int \int \varphi \  d\ddot{\xi}_x d\mu_k(x) & \leq \int \int \varphi \ d\restrict{\hat{\xi}_{x}}{  U_x\times U_x} \ d\mu_k(x)\\
& +  \int\dfrac{1}{\| I _x \| } \int \varphi  \ d \left( \restrict {O _{x}}{ U_2} \times I _x + I _x \times \restrict {O _{x}}{ U_2} \right) d\mu_k(x) \\
&+ \int  \dfrac{1}{\| \theta  _x \| } \int \varphi \ d\left( \lambda  _x \times \theta  _x + \theta  _x \times \lambda  _x \right)\ d\mu_k(x).
\end{split}
\end{equation}

\begin{figure}[hbtp]
\centering
\includegraphics[scale=1]{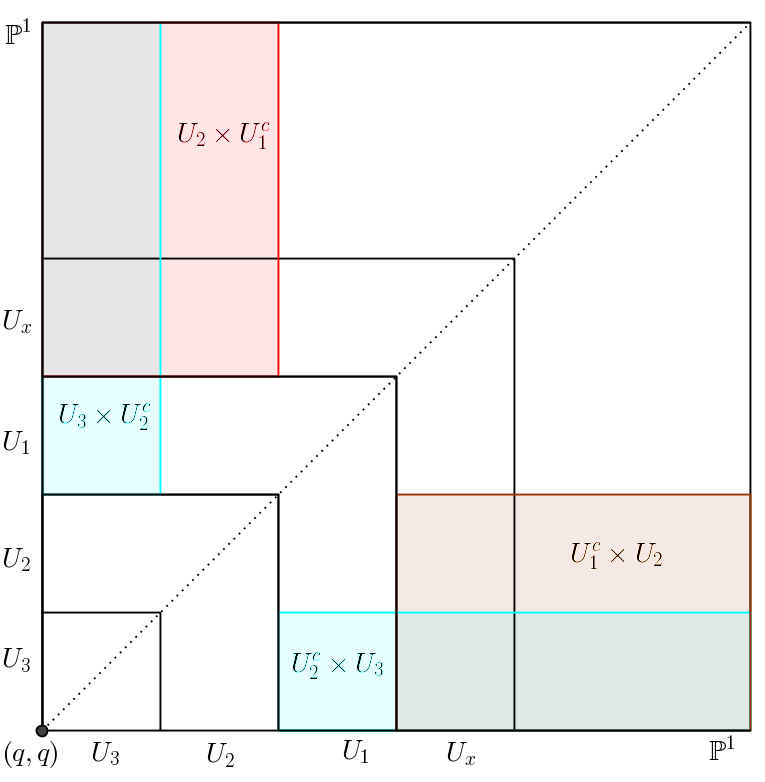}
\caption{Mass away from the diagonal. \label{figure: proj2}}
\end{figure}

Since  $\textrm{supp}(I _x) \subset U^c_1$ and $\textrm{supp}(\restrict{O _{x}}{ U_2})\subset U_2$ we have 
\begin{align*}
 \int\dfrac{1}{\| I _x \| } \int \varphi  &\ d \left( \restrict {O _{x}}{ U_2} \times I _x + I _x \times \restrict {O _{x}}{ U_2} \right) d\mu_k(x)\\
&=  \int\dfrac{2}{\| I _x \| } \int \varphi  \ d \left( \restrict {O _{x}}{ U_2} \times I _x \right) d\mu_k(x)\\
&   \leq \int \dfrac{2}{\| I _x \| } \int M_2  \ d \left( \restrict {O _{x}}{ U_2} \times I _x \right) d\mu_k(x) 
\\& \leq  \int \dfrac{2M_2 \| I _x \| O _x ( U_2)}{\| I _x \| }  \ d \mu_k(x)\leq 4\delta M_2.
\end{align*}
Similarly,  $\textrm{supp}(\theta _x)\subset U_3$ and $\textrm{supp}(\lambda _x)\subset U_2^c$ hence 
\begin{align*}
\int \dfrac{1}{\| \theta  _x \| } \int \varphi \ & d\left( \lambda  _x \times \theta  _x + \theta  _x \times \lambda  _x \right)d\mu_k(x)
\\&\leq \int  \dfrac{2}{\| \theta  _x \| } \int \varphi \ d\left( \lambda  _x \times \theta  _x \right)d\mu_k(x)\\&
\\& \leq   \int \dfrac{2}{\| \theta  _x \| }\int M_2 \ d\left( \lambda  _x \times \theta  _x \right)d\mu_k(x) 
\\& \leq  \int \dfrac{2 M_2 \| \theta  _x \| \| \lambda  _x \|}{\| \theta  _x \| }  \ d \mu_k (x) \leq 8\delta M_2
\end{align*}

Moreover, from the construction of  $\ddot{\xi} _x$ and recalling that   $\dot{\xi} _y (U_2 ^c\times U_2 ^c) =0$ for all $y $ we have 
\begin{align*}
 \int \int \varphi \ &d\restrict{\hat{\xi}_{x}}{U_x\times U_x} \ d\mu_k(x)
 \le  \int \int \varphi \ d\hat{\xi}_x \ d\mu_k(x) \\
%\int  \int _{\Pone\times \Pone} \varphi (u,v) d\ddot{\xi}^k_x d\mu_k(x)\\
&=\int  \sum _{y\in f^{-N}(x)} g(y)\left( \int _{U_y\times U_y} \varphi \left(\P A^N_k(y)(u),\P A^N_k(y)(v)\right)  d\dot{\xi} _y \right) d\mu_k(x)\\
&\le \int  \sum _{y\in f^{-N}(x)} g(y) \left(
\int _{U_1\times U_1} \varphi \left(\P A^N_k(y)(u),\P A^N_k(y)(v)\right) d\dot{\xi} _y \right)  d\mu_k(x)\\
&+\int  \sum _{y\in f^{-N}(x)} g(y) \left( \int _{(U_2\times U_1^c) \cup (U_1^c \times U_2)}  \varphi \left(\P A^N_k(y)(u),\P A^N_k(y)(v)\right)   d\dot{\xi} _y \right)   d\mu_k(x).
\end{align*}

  Recalling the choice of $M_1, M_2 $ and $\delta $ we have 
\begin{align*}
\int  \sum _{y\in f^{-N}(x)}g(y) & \left( \int _{(U_2\times U_1^c) \cup (U_1^c\times U_2)}  \varphi \left(\P A^N_k(y)(u),\P A^N_k(y)(v)\right)\ d\dot{\xi} _y \right) d\mu_k(x)\\
&\le 2 \int  \sum _{y\in f^{-N}(x)}g(y)  \left( \int _{U_2\times U_1^c} M_1 \varphi (u, v) \ d\dot{\xi} _y \right)  d\mu_k(x)
\\ &\leq 2M_1M_2  \int  \dot{\xi} _x(U_1^c\times U_2) \ d\mu_k(x) \\ &\leq 2M_1M_2  \int  \restrict{m^k_x}{U_x} (U_1^c) \ d\mu_k(x) \leq 4 \delta M_1 M_2.
\end{align*}

On the other hand, it follows from \eqref{eq: estimative 2} and Lemma \ref{lemma: coupling gives zero measure to U2 complementar big atoms} that

\begin{displaymath}
\begin{split}
\int  \sum _{y\in f^{-N}(x)} & g(y) \left(
\int _{U_1\times U_1}  \varphi \left(\P A^N_k(y)(u),\P A^N_k(y)(v)\right)   d\dot{\xi} _y \right)  d\mu_k(x)\\
&\leq \int  \int _{U_1\times U_1} \varphi (u,v)\  d\dot{\xi} _x d\mu_k(x) -\int  \dot{\xi} _x (U_1\times U_1)\kappa (x) \ d\mu_k(x)   + \alpha\\
&\leq \int  \int _{\Pone \times \Pone} \varphi (u,v)\  d\dot{\xi} _x d\mu_k(x) -\int  \dot{\xi} _x (U_1\times U_1)\kappa (x) \ d\mu_k(x)   + \alpha \\
& \leq \int  \int _{\Pone \times \Pone } \varphi (u,v)\  d\xi  _x d\mu_k(x) + 4\delta M_2 -\int  \dot{\xi} _x (U_1\times U_1)\kappa (x) \ d\mu_k(x)  + \alpha.
\end{split}
\end{displaymath}

Note that for any family of symmetric self-couplings $\{\dot \xi_x\}_{x\in \Sigma}$ of $\{ \restrict {m^k_x}{U_x}\}_{x\in \Sigma}$ we have $$\alpha - 3\delta \le  \dot\xi_x(U_1\times \Pone) - \dot \xi_x(U_1\times(U_x\sm U_1))= 
\dot \xi_x(U_1\times U_1) \le \dot \xi_x(U_1\times \Pone) \le \alpha + \delta.$$

Writing  $\kappa ^+ (x)$ and $\kappa ^-(x)$,  respectively,  for  the positive and negative parts of $\kappa (x)$ we have that
\begin{displaymath}
\begin{split}
&\int  \dot{\xi} _x (U_1\times U_1)\kappa (x)\ d\mu_k(x)\\
& \geq (\alpha-3\delta)\int  \kappa ^ +(x)d\mu_k(x) -(\alpha+ \delta )\int  \kappa ^- (x)\ d\mu_k(x)\\
&= (\alpha-3\delta)\int  \kappa (x)\ d\mu_k(x) -4\delta \int  \kappa ^- (x)\ d\mu_k(x)\\
& \geq (\alpha-3\delta)4 -4\delta M_1\geq 4\alpha- 16\delta M_1> 3\alpha.
\end{split}
\end{displaymath}

We therefore have 
\begin{align*}
\int \int \varphi \ & d\ddot{\xi}_x d\mu_k(x) \\&
\leq \int  \int  \varphi \  d\xi _x d\mu_k(x) + 4\delta M_2 - 3\alpha  + 4\delta M_1 M_2 + 4\delta M_2 + 8\delta M_2 + \alpha\\
&
\leq \int  \int  \varphi  \  d\xi _x d\mu_k(x)  -\alpha . \qedhere
\end{align*}
\end{proof}
This completes the proof of Proposition \ref{prop:key}.  Combined with the results of Section \ref{sec:atomic case} this completes the proof of Theorem \ref{mainthm}.

%%%%%%%%%%%%%%%%%%%%%%%%%%%%%%%Bibliografia%%%%%%%%%%%%%%%%%%%%%%%%%%%%%%%%%%%
%%\cite{Bocker-Neto:2010aa}
%\cite{1408.5602}%
%\cite{1410.0699}
%\bibliographystyle{AWBmath}

\bibliography{bibliography}

\end{document}